%

%
%
%
%
\RequirePackage{fix-cm}
\documentclass[smallextended]{svjour3}       
\smartqed  
\usepackage{graphicx}
\usepackage{multirow,bigstrut}
\usepackage{amsmath,amsfonts,amssymb}
\usepackage{mathrsfs}
\usepackage{color}
\usepackage{upgreek}
\usepackage{bm}
\usepackage{verbatim}
\usepackage{mathtools}
%
%
%
%
%
\begin{document}

\title{Mathematical and numerical analysis of the time-dependent Maxwell--Schr\"{o}dinger Equations in the Coulomb gauge\thanks{This work is supported by National Natural Science Foundation
of China (grant 11571353, 91330202),  and
Project supported by the Funds for Creative Research Group of China
(grant 11321061).}
}

\titlerunning{Mathematical and numerical analysis of M-S equations}        

\author{ Chupeng Ma\textsuperscript{1}  \and Liqun Cao\textsuperscript{1}  \and
        Jizu Huang\textsuperscript{1} \and  Yanping Lin\textsuperscript{2}
}

\authorrunning{Chupeng Ma et al.} 

\institute{Chupeng Ma  \\
   \email{machupeng@lsec.cc.ac.cn}\\
   \\      
   Liqun Cao \\
  \email{clq@lsec.cc.ac.cn}      \\
  \\
  Jizu Huang     \\
 \email{huangjz@lsec.cc.ac.cn} \\
 \\
 Yanping Lin \\
 \email{malin@polyu.edu.hk}\\
 \\
  \textsuperscript{1} Institute of
      Computational Mathematics and Scientific/Engineering Computing,
      Academy of Mathematics and Systems Science, Chinese Academy of
      Sciences, Beijing 100190, China \\
 \\
 \textsuperscript{2} Department of Applied Mathematics,
The Hong Kong Polytechnic University, Kowloon, Hong Kong, China      
 }

\date{Received: date / Accepted: date}

\maketitle

\begin{abstract}
In this paper, we consider the initial-boundary value problem for the time-dependent Maxwell--Schr\"{o}dinger equations in the Coulomb gauge. We first prove the global existence of weak solutions to the equations. Next we propose an energy-conserving fully discrete finite element scheme for the system and prove the existence and uniqueness of solutions to the discrete system. The optimal error estimates for the numerical scheme without any time-step restrictions are then derived. Numerical results are provided to support our theoretical analysis.
\keywords{Maxwell--Schr\"{o}dinger equations \and weak solution \and mixed finite element \and optimal error estimate}
\end{abstract}

\section{Introduction}\label{sec-1}
In this paper, we consider one of the fundamental equations of nonrelativitic quantum mechanics, the Maxwell--Schr\"{o}dinger (M-S) system, which describes the time-evolution of an electron within its self-consistent generated and external electromagnetic fields. In this system, the Schr\"{o}dinger's equation can be written as follows:
\begin{equation}\label{eq:1.1}
\begin{array}{lll}
{\displaystyle  \mathrm{i}\hbar\frac{\partial \Psi}{\partial t}=
\Big\lbrace\frac{1}{2m}\left[\mathrm{i}\hbar\nabla +q\mathbf{A}\right]^{2}
 + q \phi+V\Big\rbrace\Psi \quad {\rm in} \,\,\, \Omega_{T}, }\\[2mm]
\end{array}
\end{equation}
where $\Omega_T = \Omega \times (0, T)$. Here $\Psi$,  $m$, and $q$ denote the wave function, the mass, and the charge of the electron, respectively. $V$ is the time-independent potential energy and is assumed to be bounded in this paper. The magnetic potential $\mathbf{A}$ and the electric potential $\phi$ are obtained by solving the following equations:
\begin{equation}\label{eq:1.2}
\mathbf{E} = -\nabla \phi - \frac{\partial \mathbf{A}}{\partial t}, \qquad \mathbf{B} = \nabla\times\mathbf{A},
\end{equation}
where the electric fields $\mathbf{E}$ and the magnetic fields $\mathbf{B}$ satisfy the Maxwell's equations:
\begin{equation}\label{eq:1.3}
\begin{array}{lll}
{\displaystyle \nabla \times \mathbf{E}+\frac{\partial \mathbf{B}}{\partial t}=0, \;\quad \quad \nabla\cdot\mathbf{B} = 0 , }\\[2mm]
{\displaystyle \frac{1}{{\mu}}\nabla \times \mathbf{B} - \epsilon\frac{\partial \mathbf{E}}{\partial t}=\mathbf{J}, \quad \nabla\cdot(\epsilon \mathbf{E}) = \rho.}
\end{array}
\end{equation}
Here $\epsilon$ and $\mu$ denote the electric permittivity and the magnetic permeability of the material, respectively. The charge density $\rho$ and the current density $\mathbf{J}$ are defined as follows
\begin{equation}\label{eq:1.4}
\rho = q|\Psi|^{2}, \qquad \mathbf{J} = -\frac{\mathrm{i}q\hbar}{2m}\big(\Psi^{\ast}\nabla{\Psi}-\Psi\nabla{\Psi}^{\ast}\big)-\frac{\vert q\vert^{2}}{m}\vert\Psi\vert^{2}\mathbf{A},
\end{equation}
 where $\Psi^{\ast} $ denotes the conjugate of $\Psi$.
 
 Substituting (\ref{eq:1.2}) into (\ref{eq:1.3}) and combining (\ref{eq:1.1}) and (\ref{eq:1.4}), we have the following M-S system 
 \begin{equation}\label{eq:1.5}
\left\{
\begin{array}{@{}l@{}}
{\displaystyle  \mathrm{i}\hbar\frac{\partial \Psi}{\partial t}=
\left\lbrace\frac{1}{2m}\left[\mathrm{i}\hbar\nabla +q\mathbf{A}\right]^{2}
 + q \phi+V \right\rbrace\Psi \,\quad {\rm in} \,\,\, \Omega_{T},}\\[2mm]
 {\displaystyle -\frac{\partial}{\partial t}\nabla\cdot\big(\epsilon\mathbf{A}\big)
 -\nabla\cdot\big(\epsilon\nabla\phi\big) =q |\Psi|^{2} \quad \quad \quad \quad {\rm in} \,\,\, \Omega_{T},} \\[2mm]
{\displaystyle \epsilon\frac{\partial ^{2}\mathbf{A}}{\partial t^{2}}+\nabla\times
\big({\mu}^{-1}\nabla\times \mathbf{A}\big)
+\epsilon \frac{\partial (\nabla \phi)}{\partial t} =\mathbf{J} \quad {\rm in} \,\,\, \Omega_{T}, }\\[2mm]
{\displaystyle \mathbf{J}=-\frac{\mathrm{i}q\hbar}{2m}\big(\Psi^{\ast}\nabla{\Psi}-\Psi\nabla{\Psi}^{\ast}\big)-\frac{\vert q\vert^{2}}{m}\vert\Psi\vert^{2}\mathbf{A} \,\,\quad {\rm in} \,\,\, \Omega_{T},} \\[2mm]
{\displaystyle \Psi, \phi, \mathbf{A} \,\, \mathrm{subject \ to \ the \ appropriate \ initial \ and\ boundary \ conditions}. }
\end{array}
\right.
\end{equation}
We assume that $\Omega$ is a bounded domain in $\mathbb{R}^d$ ($ d = 2,\,3$). The total energy of the system, at time $t$, are defined as follows
\begin{equation}\label{eq:1.6}
\begin{array}{@{}l@{}}
{\displaystyle \mathcal{E}(t) = \int_{\Omega}\big(\frac{1}{2}\big\vert \left(i\nabla + q\mathbf{A}\right)\Psi(t,\mathbf{x})\big\vert^{2} + V|\Psi(t,\mathbf{x})|^{2} }\\[2mm]
{\displaystyle \qquad \qquad + \frac{\epsilon}{2}|\mathbf{E}(t,\mathbf{x})|^{2} + \frac{1}{2\mu}|\mathbf{B}(t,\mathbf{x})|^{2} \big)\mathrm{d}\mathbf{x}.  }
\end{array}
\end{equation}
For a smooth solution $(\Psi, \mathbf{A}, \phi)$ satisfying certain appropriate boundary conditions,  the energy is a conserved quantity. 

It is well known that the solutions of  the above M-S system are not uniquely determined. In fact, the M-S system is invariant under the gauge transformation:
\begin{equation}\label{eq:1.7}
\Psi\longrightarrow \Psi^{\prime} = e^{\mathrm{i}\chi}\Psi, \quad \mathbf{A}\longrightarrow \mathbf{A}^{\prime} = \mathbf{A} + \nabla \chi ,\quad \phi \longrightarrow \phi^{\prime} = \phi - \frac{\partial \chi}{\partial t}, 
\end{equation} 
for any sufficiently smooth function $\chi : \Omega\times (0,T)\rightarrow \mathbb{R}$. That is, if $(\Psi, \mathbf{A}, \phi)$ satisfies M-S system, then so does $(\Psi^{\prime}, \mathbf{A}^{\prime}, \phi^{\prime})$.

In view of the gauge freedom, to obtain mathematically well-posed equations, we can impose some extra constraint, commonly known as gauge choice, on the solutions of the M-S system. In this paper, we study the M-S system in the Coulomb gauge, i.e. $\nabla \cdot \mathbf{A} = 0$.

By employing the atomic units,  i.e. $\hbar=m=q=1$,  and assuming that $\epsilon=\mu=1$, the M-S system in the Coulomb gauge (M-S-C) can be reformulated as follow:
\begin{equation}\label{eq:1.8}
\left\{
\begin{array}{@{}l@{}}
{\displaystyle  -\mathrm{i}\frac{\partial \Psi}{\partial t}+
\frac{1}{2}\left(\mathrm{i}\nabla +\mathbf{A}\right)^{2}\Psi
 + V\Psi + \phi\Psi= 0 \quad {\rm in} \,\,\, \Omega_{T},}\\[2mm]
{\displaystyle \frac{\partial ^{2}\mathbf{A}}{\partial t^{2}}+\nabla\times
(\nabla\times \mathbf{A}) +\frac{\partial (\nabla \phi)}{\partial t}+\frac{\mathrm{i}}{2}\big(\Psi^{*}\nabla{\Psi}-\Psi\nabla{\Psi}^{*}\big) }\\[2mm]
{\displaystyle \qquad\qquad+\vert\Psi\vert^{2}\mathbf{A}=0 \quad {\rm in} \,\,\, \Omega_{T},}\\[2mm]
{\displaystyle  \nabla \cdot \mathbf{A} =0, \quad -\Delta \phi = \vert\Psi\vert^{2}\quad {\rm in} \,\,\, \Omega_{T} }.
\end{array}
\right.
\end{equation}

In this paper, the M-S-C system (\ref{eq:1.8}) is considered in conjunction with the following initial boundary conditions:
\begin{equation}\label{eq:1.9}
\left\{
\begin{array}{lll}
{\displaystyle \Psi(\mathbf{x},t)=0,\quad \mathbf{A}(\mathbf{x},t)\times\mathbf{n}=0, \quad \phi(\mathbf{x},t) = 0 \quad \mathrm{on}\;
\partial \Omega\times(0,T),}\\[2mm]
{\displaystyle \Psi(\mathbf{x},0) = \Psi_0(\mathbf{x}),\quad\mathbf{A}(\mathbf{x},0)=\mathbf{A}_{0}(\mathbf{x}),\quad
\mathbf{A}_{t}(\mathbf{x},0)=\mathbf{A}_{1}(\mathbf{x}) \quad \rm{in}\;\Omega,}
\end{array}
\right.
\end{equation}
with $\nabla \cdot \mathbf{A}_{0} = \nabla\cdot\mathbf{A}_{1} = 0$.

In the M-S-C system, the energy $\mathcal{E}(t)$ takes the following form
\begin{equation}\label{eq:1.10}
\begin{array}{@{}l@{}}
{\displaystyle \mathcal{E}(t) = \int_{\Omega}\left(\frac{1}{2}\big\vert \left[i\nabla + q\mathbf{A}\right]\Psi\big\vert^{2} + V|\Psi|^{2} + \frac{1}{2}\big|\frac{\partial \mathbf{A}}{\partial t}\big|^{2} + \frac{1}{2}|\nabla \times \mathbf{A}|^{2} + \frac{1}{2}|\nabla \phi|^{2} \right)\;\mathrm{d}\mathbf{x}.  }
\end{array}
\end{equation}

\begin{remark}\label{rem1-1}
The boundary condition $
\Psi(\mathbf{x}, t)=0$ implies that the particle is confined in the domain $\Omega$. 
The boundary conditions $\mathbf{A}(\mathbf{x},t)\times\mathbf{n}=0 $ and $\phi(\mathbf{x},t) = 0$ denote the perfect conductive boundary condition (PEC).
We refer readers to \cite{Weng} for the determination of the boundary conditions for the vector potential $\mathbf{A}$ and the scalar potential $\phi$ in different electromagnetic environment.
\end{remark}

The existence and uniqueness of (smooth) solutions to the time-dependent M-S equations (\ref{eq:1.5}) in all of $\mathbb{R}^{2}$ or $\mathbb{R}^{3}$ have been studied in \cite{Guo,Nak,Nak-1,Nak-2}. However, these results don't hold for bounded domains because some important tools used in these work can't apply to bounded domains. For example, Strichartz estimates and many tools from Fourier analysis. In this paper, we will prove the existence of weak solutions to the M-S-C system in a bounded smooth domain by Galerkin's method and compactness arguments. To the best of our knowledge, this is the first result on the existence of weak solutions to the inital-boundary problem of the M-S-C system in a bounded smooth domain.

In recent years, with the development of nanotechnology, there has been considerable interest in developing physical models and numerical methods to simulate  light-matter interaction at the nanoscale.  Due to the natural coupling of the electromagnetic fields and quantum effects, the Maxwell--Schr\"{o}dinger model is widely used in simulating self-induced transparency in quantum dot systems \cite{karni},  laser-molecule interaction \cite{Lor-1},  carrier dynamics in nanodevices \cite{Pi} and molecular nanopolaritonics \cite{Lop}. However, in these existing methods, the Maxwell's equations of field type (\ref{eq:1.3}), instead of the potential type in (\ref{eq:1.5}),  are usually coupled to the Schr\"{o}dinger's equation through the dipole approximation or by extracting the vector potential $\mathbf{A}$ and the scalar potential $\phi$ from the electric field $\mathbf{E}$ and the magnetic field $\mathbf{H}$ \cite{Ah-1,Oh,Sui,Tur}. In part because there exists 
robust numerical algorithms for the Maxwell's equations (\ref{eq:1.3}), for example, the time domain finite difference (FDTD) method, the transmission line matrix (TLM) method, etc. Recently, RYU \cite{Ryu} used the FDTD scheme to discretize  the Maxwell--Schr\"{o}dinger equations (\ref{eq:1.5}) directly in the Lorentz gauge to simulate a single electron in an artificial atom excited by an incoming electromagnetic field. But so far,  there are rather limited studies on the numerical algorithms of the M-S system (\ref{eq:1.5}) as well as their convergence analysis.

In this paper we will present a fully discrete finite element method for solving the problem
(\ref{eq:1.8})-(\ref{eq:1.9}) and show that it is equivalent to a fully discrete Crank--Nicolson scheme based on mixed finite element method. We will show that our scheme maintains the conservation properties of the original system. Compared with the commonly used method which couples the Maxwell's equations of field type with the Schr\"{o}dinger's equation and solves the system by the FDTD method, our method keeps the total charge and energy of the discrete system conserved and may suffer from less restriction in the time step-size since we use the Crank--Nicolson scheme in the time direction. In this paper we establish the optimal error estimates for the proposed method without any restrictions on the spatial mesh step $h$ and the time step $\tau$. In general it is very difficult to derive error estimates without any restrictions on the spatial mesh step and the time step for the highly complicated, nonlinear equations since the inverse inequalities are usually used to bound the nonlinear terms. In this paper we avoid using the inverse inequalities due to two aspects. On the one hand, we deduce some stability estimates of the approximate solutions by using the conservation properties of our scheme. More importantly, we take advantage of the special structures of the system and make some difficult nonlinear terms in the Schr\"{o}dinger's equation and the Maxwell's equations respectively cancel out. To the best of our knowledge, this is the first theoretical analysis on the numerical algorithms for the M-S-C system (\ref{eq:1.8}).

The rest of this paper is organized as follows. In section~\ref{sec-2} we introduce some notation and prove the existence of weak solutions to the M-S-C system (\ref{eq:1.8})-(\ref{eq:1.9}). In section~\ref{sec-3},  we present two fully discrete finite element schemes for the M-S-C system and show that they are equivalent.  Section~\ref{sec-4} is devoted to the proof of energy-conserving property of the discrete system and some stability estimates of the approximate solutions. In section~\ref{sec-5}, we prove the existence and uniqueness of solutions to the discrete system. The optimal error estimates without any restrictions on the time step are derived in section~\ref{sec-6}. We provide some numerical experiments in section~\ref{sec-7} to confirm our theoretical analysis.
\section{Global existence of weak solutions to the M-S-C system}\label{sec-2}
In this section, we study the existence of weak solutions to the M-S-C system (\ref{eq:1.8}) together with the initial-boundary conditions (\ref{eq:1.9}) in a bounded smooth domain. For simplicity, We introduce some notation below. 

For any nonnegative integer $s$,  we denote $W^{s,p}(\Omega)$ as the conventional Sobolev spaces of the real-valued functions defined in $\Omega$ and  $W^{s,p}_{0}(\Omega)$ as the subspace of $W^{s,p}(\Omega)$ consisting of functions whose traces are zero on $\partial \Omega$.
As usual, we denote $H^{s}(\Omega)\,=\,W^{s,2}(\Omega)$,  $H^{s}_{0}(\Omega)\,=\,W^{s,2}_{0}(\Omega) $, and $L^{p}(\Omega)\,=\,W^{0,p}(\Omega)$, respectively.
We use  $\mathcal{H}^{s}(\Omega)=\{u+\mathrm{i}v\,|\, u,v \in H^{s}(\Omega)\}$ and $\mathcal{L}^{p}(\Omega)=\{u+\mathrm{i}v\,|\, u,v \in L^{p}(\Omega)\} $
with calligraphic letters for Sobolev spaces and Lebesgue spaces of the complex-valued functions, respectively.
Furthermore, let $\mathbf{H}^{s}(\Omega)=[H^{s}(\Omega)]^{d}$ and $\mathbf{L}^{p}(\Omega) = [L^{p}(\Omega)]^{d} $ with bold faced letters be Sobolev spaces and Lebesgue spaces of 
the vector-valued functions with $d$ components ($d$=2,\,3). The dual spaces of $\mathcal{H}_{0}^{s}(\Omega)$, ${H}_{0}^{s}(\Omega)$, and $\mathbf{H}_{0}^{s}(\Omega)$ are denoted by $\mathcal{H}^{-s}(\Omega)$, ${H}^{-s}(\Omega)$, and $\mathbf{H}^{-s}(\Omega)$, respectively.
$ L^{2}$ inner-products in $H^{s}(\Omega) $, $\mathcal{H}^{s}(\Omega)$,
and $\mathbf{H}^{s}(\Omega)$  are denoted by $(\cdot,\cdot )$ without ambiguity.

In particular, we consider the following subspaces of $\mathbf{H}^{1}(\Omega)$ and $\mathbf{L}^{2}(\Omega)$:
\begin{equation*}
\begin{array}{@{}l@{}}
{\displaystyle \mathbf{H}_{t}^{1}(\Omega)=\{\mathbf{v}\in\mathbf{H}^{1}(\Omega) \, |  \,\, \mathbf{v}\times\mathbf{n}=0 \,\, \,{\rm on}\, \,\partial\Omega\}, }\\[2mm]
{\displaystyle \mathbf{H}^{1}_{t,0}(\Omega)=\{\mathbf{v}\in\mathbf{H}_{t}^{1}(\Omega) \, | \,\,\nabla \cdot \mathbf{v} = 0 \}, }\\[2mm]
{\displaystyle \mathbf{L}^{2}_{0}(\Omega)=\{\mathbf{v}\in\mathbf{L}^{2}(\Omega) \, | \,\,\nabla \cdot \mathbf{v} = 0 \; {\rm weakly}\,\} }.
\end{array}
\end{equation*}
The semi-norms on $\mathbf{H}_{t}^{1}(\Omega)$ and $\mathbf{H}^{1}_{t,0}(\Omega)$ are defined by 
\begin{equation*}
| \mathbf{u} |_{ \mathbf{H}_{t}^{1}} : = \Vert\nabla \cdot \mathbf{u}\Vert_{{L}^{2}(\Omega)} + \Vert\nabla \times \mathbf{u}\Vert_{\mathbf{L}^{2}(\Omega)}, \quad | \mathbf{u} |_{ \mathbf{H}^{1}_{t,0}} : = \Vert\nabla \times \mathbf{u}\Vert_{\mathbf{L}^{2}(\Omega)},
\end{equation*}
both of which are equivalent to the standard $\mathbf{H}^{1}(\Omega)$-norm $\Vert \mathbf{u} \Vert_{ \mathbf{H}^{1}(\Omega)}\,$\cite{Gir}.

To take into account the time dependence, for any Banach space $W$ and integer $s \geq 1$, we define function spaces $C\big([0,T], \; W\big) $, $C\big((0,T), \; W\big) $, and $C^{s}_{0}\big((0,T), \; W\big)$ consisting of $W$-valued functions in $C[0,T]$, $C(0,T)$, and $C^{s}_{0}(0,T)$, respectively.

We now give two definitions of weak solution to the M-S-C system (\ref{eq:1.8}) together with the initial-boundary conditions (\ref{eq:1.9}).
\begin{definition}[Weak solution \uppercase\expandafter{\romannumeral1}]\label{def:2.1}
$(\Psi,\,\mathbf{A},\,\phi)$ is a weak solution of type \uppercase\expandafter{\romannumeral1} to (\ref{eq:1.8})-(\ref{eq:1.9}), if
\begin{subequations}
\begin{gather}
\Psi \in C\big([0,T]; \mathcal{L}^{2}(\Omega)\big)\cap L^{\infty}\big(0, T; \mathcal{H}_{0}^{1}(\Omega)\big), \;\frac{\partial \Psi}{\partial t} \in L^{\infty}\big(0, T; \mathcal{H}^{-1}(\Omega)\big),   \label{subeq:6-000}\\
\phi \in C\big([0,T]; {L}^{2}(\Omega)\big)\cap L^{\infty}\big(0, T; H_{0}^{1}(\Omega)\big),\;\frac{\partial \phi}{\partial t} \in L^{\infty}\big(0, T; L^{2}(\Omega)\big), \label{subeq:6-001}\\
\mathbf{A} \in C\big([0,T]; \,\mathbf{L}^{2}(\Omega)\big)\cap L^{\infty}\big(0, T; \,\mathbf{H}^{1}_{t,0}(\Omega)\big), \label{subeq:6-002}\\
\frac{\partial \mathbf{A}}{\partial t} \in C\big([0,T]; \,\big(\mathbf{H}^{1}_{t,0}(\Omega) \big)^{\prime}\,\big)\cap L^{\infty}\big(0, T;\, \mathbf{L}^{2}(\Omega)\big),  \label{subeq:6-003} 
\end{gather}
\end{subequations}
with the initial condition $\Psi(\cdot , 0) \in \mathcal{H}^{1}_{0}(\Omega)$, $\mathbf{A}(\cdot , 0) \in \mathbf{H}^{1}_{t,0}(\Omega)$, $\mathbf{A}_{t}(\cdot , 0) \in \mathbf{L}_{0}^{2}(\Omega)$, and the variational equations
\begin{equation}\label{eq:6-01}
\int_{0}^{T}\bigg[\mathrm{i}\big(\Psi,\,\frac{\partial \widetilde{\Psi}}{\partial t}\big)+
\frac{1}{2}\big(\left(\mathrm{i}\nabla +\mathbf{A}\right)\Psi,\,\left(\mathrm{i}\nabla +\mathbf{A}\right) \widetilde\Psi\big)
 + \big(V\Psi,  \widetilde\Psi \big)+\big(\phi\Psi, \widetilde\Psi\big)\bigg]\mathrm{d}t = 0,  
 \end{equation}
 \begin{equation}\label{eq:6-02}
 \int_{0}^{T}\bigg[\big( \mathbf{A},\frac{\partial ^{2}  \widetilde{\mathbf{A}}}{\partial t^{2}})+\big(\nabla\times
 \mathbf{A},\nabla\times  \widetilde{\mathbf{A}}\big) +\big(\frac{\mathrm{i}}{2}(\Psi^{*}\nabla{\Psi}-\Psi\nabla{\Psi}^{*}), \widetilde{\mathbf{A}}\big) + \big( |\Psi|^{2}\mathbf{A},  \widetilde{\mathbf{A}}\big)\bigg]\mathrm{d}t= 0, 
 \end{equation}
 \begin{equation}\label{eq:6-03}
\displaystyle \int_{0}^{T}\left[\big(\nabla \phi,\, \nabla  \widetilde{\phi}\big) - \big(|\Psi|^{2},\, \widetilde{\phi}\big)\right]\mathrm{d}t= 0, 
\end{equation}
hold for all $  \widetilde{\Psi} \in C_{0}^{1}\big((0,T); \,\mathcal{H}_{0}^{1}(\Omega)\big)$, $  \widetilde{\mathbf{A} }\in C_{0}^{2}\big((0,T);\,\mathbf{H}^{1}_{t,0}(\Omega)\big) $ and $\widetilde{\phi}\in C\big((0,T);\\ \, H_{0}^{1}(\Omega)\big)$.
\end{definition}

\begin{definition}[Weak solution \uppercase\expandafter{\romannumeral2}]\label{def:2.2}
$(\Psi,\,\mathbf{A},\,\phi)$ is a weak solution of type \uppercase\expandafter{\romannumeral2} to (\ref{eq:1.8})-(\ref{eq:1.9}), if (\ref{subeq:6-000}), (\ref{subeq:6-001}), (\ref{eq:6-01}) and (\ref{eq:6-03}) in Definition~\ref{def:2.1} are satisfied and 
\begin{subequations}
\begin{gather}
\mathbf{A} \in C\big([0,T]; \,\mathbf{L}^{2}(\Omega)\big)\cap L^{\infty}\big(0, T; \,\mathbf{H}_{t}^{1}(\Omega)\big), \label{subeq:6-004}\\
\frac{\partial \mathbf{A}}{\partial t} \in C\big([0,T]; \,(\mathbf{H}_{t}^{1}(\Omega))^{\prime}\big)\cap L^{\infty}\big(0, T;\, \mathbf{L}^{2}(\Omega)\big),  \label{subeq:6-005}
\end{gather}
\end{subequations}
 \begin{equation}\label{eq:6-04}
 \begin{array}{lll}
{\displaystyle  \int_{0}^{T}\bigg[\big( \mathbf{A},\frac{\partial ^{2}  \widetilde{\mathbf{A}}}{\partial t^{2}})+\big(\nabla\times
 \mathbf{A},\nabla\times  \widetilde{\mathbf{A}}\big)+\big(\frac{\mathrm{i}}{2}(\Psi^{*}\nabla{\Psi}-\Psi\nabla{\Psi}^{*}), \,\widetilde{\mathbf{A}}\big)   }\\[2mm]
 {\displaystyle \qquad \quad - \big(\frac{\partial \phi}{\partial t},\, \nabla \cdot \widetilde{\mathbf{A}}\big) + \big( |\Psi|^{2}\mathbf{A},  \,\widetilde{\mathbf{A}}\big)\bigg]\mathrm{d}t= 0,   \quad \forall \widetilde{\mathbf{A} }\in C_{0}^{2}\big((0,T);\,\mathbf{H}_{t}^{1}(\Omega)\big) }.
 \end{array}
 \end{equation}
\end{definition}

The following theorem shows that the above two definitons of weak solutions are equivalent.
\begin{theorem}\label{thm6-00}
The weak solutions to the M-S-C system defined in Definiton~\ref{def:2.1} and Definition~\ref{def:2.2} are equivalent.    
\end{theorem}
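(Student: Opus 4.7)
The equivalence hinges on a Helmholtz-type decomposition of the test-function space, coupled with two structural identities implicit in (\ref{eq:6-01})--(\ref{eq:6-03}): the Poisson equation $-\Delta\phi=|\Psi|^2$ (already encoded in (\ref{eq:6-03})) and the charge-conservation law
\[ \partial_t|\Psi|^2 \;=\; \tfrac{\mathrm i}{2}\nabla\cdot\bigl(\Psi^*\nabla\Psi-\Psi\nabla\Psi^*\bigr)+\nabla\cdot(|\Psi|^2\mathbf A), \]
obtained from (\ref{eq:6-01}) by formally testing against $\Psi$ and taking the imaginary part. The only substantive distinction between Definitions~\ref{def:2.1} and~\ref{def:2.2} lies in how (\ref{eq:6-04}) acts on gradient test fields, so the proof splits cleanly into two directions.

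For (\uppercase\expandafter{\romannumeral2})$\Rightarrow$(\uppercase\expandafter{\romannumeral1}), I would insert $\widetilde{\mathbf A}(\mathbf x,t)=\psi(t)\nabla\chi(\mathbf x)$ with $\psi\in C_0^2(0,T)$ and $\chi\in H^2(\Omega)\cap H_0^1(\Omega)$ into (\ref{eq:6-04}). The curl term drops; integrating by parts using $\chi|_{\partial\Omega}=0$, invoking the time-differentiated (\ref{eq:6-03}) to replace $-(\partial_t\phi,\Delta\chi)$ by $(\partial_t|\Psi|^2,\chi)$, and using the charge-conservation identity to cancel this against the $\Psi$-dependent terms, everything collapses to
\[ \int_0^T(\nabla\cdot\mathbf A(\cdot,t),\chi)\,\psi''(t)\,\mathrm dt \;=\; 0. \]
Varying $\psi$ forces $t\mapsto(\nabla\cdot\mathbf A(\cdot,t),\chi)$ to be affine, and the initial data $\nabla\cdot\mathbf A_0=\nabla\cdot\mathbf A_1=0$ pin its value to zero for all $t$. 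Consequently $\mathbf A\in L^\infty(0,T;\mathbf H^1_{t,0}(\Omega))$, the extra term $-(\partial_t\phi,\nabla\cdot\widetilde{\mathbf A})$ vanishes on every $\widetilde{\mathbf A}\in\mathbf H^1_{t,0}(\Omega)$, and (\ref{eq:6-02}) is recovered.

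For (\uppercase\expandafter{\romannumeral1})$\Rightarrow$(\uppercase\expandafter{\romannumeral2}), the regularity conditions (\ref{subeq:6-004})--(\ref{subeq:6-005}) hold automatically since $\mathbf H^1_{t,0}\subset\mathbf H^1_t$. To produce (\ref{eq:6-04}) for an arbitrary $\widetilde{\mathbf A}\in C_0^2((0,T);\mathbf H_t^1(\Omega))$, I would apply a time-dependent Helmholtz split $\widetilde{\mathbf A}=\widetilde{\mathbf A}_0+\nabla\chi$, with $\widetilde{\mathbf A}_0(\cdot,t)\in\mathbf H^1_{t,0}(\Omega)$ and $\chi(\cdot,t)\in H^2(\Omega)\cap H_0^1(\Omega)$ defined through $-\Delta\chi=-\nabla\cdot\widetilde{\mathbf A}$; elliptic regularity transfers the $C_0^2$ temporal regularity to each component. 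Plugging $\widetilde{\mathbf A}_0$ into (\ref{eq:6-02}) delivers the divergence-free portion of (\ref{eq:6-04}); for the residual $\nabla\chi$, the term $(\mathbf A,\partial_t^2\nabla\chi)$ vanishes because $\nabla\cdot\mathbf A=0$, the curl term vanishes identically, and the same integration-by-parts computation used in the previous direction, run in reverse, closes the remaining $\Psi$-dependent terms together with $-(\partial_t\phi,\Delta\chi)$ to zero.

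The main obstacle I anticipate is justifying the charge-conservation identity at the regularity available in (\ref{subeq:6-000}): since $\Psi$ lies only in $L^\infty(0,T;\mathcal H_0^1(\Omega))$, the product $\Psi^*\nabla\Psi$ sits merely in $\mathbf L^1(\Omega)$ and the derivative $\partial_t|\Psi|^2$ must be interpreted distributionally, which I would make rigorous by mollifying $\chi$ in both variables and passing to the limit via the bound on $\partial_t\Psi\in L^\infty(0,T;\mathcal H^{-1}(\Omega))$. A secondary technicality, the Helmholtz split on a non-simply-connected $\Omega$, is handled by adding a finite-dimensional harmonic correction and observing that it contributes nothing to the relevant pairings since $\mathbf A$ is divergence-free and satisfies the tangential boundary condition.
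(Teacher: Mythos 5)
Your proposal is correct and follows essentially the same route as the paper: both derive the key identity $(\partial_t\phi,\Delta\varphi)+(\mathbf J,\nabla\varphi)=0$ by combining the charge-conservation law from (\ref{eq:6-01}) with the time-differentiated (\ref{eq:6-03}), and both use the Helmholtz decomposition of $\mathbf H^1_{\rm t}(\Omega)$ to pass between the two test-function classes, concluding $\nabla\cdot\mathbf A=0$ from $\int_0^T(\nabla\cdot\mathbf A,\chi)\,\psi''\,\mathrm dt=0$ in one direction and assembling (\ref{eq:6-04}) from (\ref{eq:6-02}) plus the gradient part in the other. Your explicit remark that the initial data $\nabla\cdot\mathbf A_0=\nabla\cdot\mathbf A_1=0$ are needed to pin the affine function to zero is a point the paper leaves implicit, but the argument is the same.
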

\begin{proof}
It suffices to show that the vector potential $\mathbf{A}$ in Definiton~\ref{def:2.1} and Definiton~\ref{def:2.2} are consistent.

For any $\varphi(\mathbf{x}) \in H_{0}^{1}(\Omega)\cap H^{2}(\Omega),\; \eta(t) \in C_{0}^{1}(0,\, T)$, by choosing  $\widetilde{\Psi} = \eta\Psi\varphi$ in (\ref{eq:6-01}),  $\widetilde{\phi} = \frac{{\rm d}\eta}{{\rm d} t}\varphi$ in (\ref{eq:6-03}), and taking the imaginary part of (\ref{eq:6-01}), we have 
\begin{equation}\label{eq:6-05}
\begin{array}{lll}
{ \displaystyle \int_{0}^{T}\Big[\big(\frac{\partial \rho}{\partial t},\, \varphi\big) - \big(\mathbf{J},\, \nabla \varphi\big) \Big]\eta(t)\mathrm{d}t= 0,  }\\[2mm]
{\displaystyle  \displaystyle \int_{0}^{T}\Big[\big(\frac{\partial\phi}{\partial t},\, \Delta \varphi\big) + \big(\frac{\partial \rho}{\partial t},\, \varphi\big)\Big]\eta(t) \mathrm{d}t= 0, } 
\end{array}
\end{equation}
where 
\begin{equation}\label{eq:6-06}
\rho = |\Psi|^{2},\;\; \mathbf{J} = -\frac{\mathrm{i}}{2}(\Psi^{*}\nabla{\Psi}-\Psi\nabla{\Psi}^{*}) -  |\Psi|^{2}\mathbf{A}.
\end{equation}
Since $\eta(t)$ is arbitrary, from (\ref{eq:6-05}) we see that 
\begin{equation}\label{eq:6-07}
\big(\frac{\partial \phi}{\partial t},\, \Delta \varphi\big) + \big(\mathbf{J},\, \nabla \varphi\big) = 0, \quad \forall \varphi \in H_{0}^{1}(\Omega)\cap H^{2}(\Omega).
\end{equation}

For any $\mathbf{v} \in \mathbf{H}_{t}^{1}(\Omega)$, we have the Helmholtz decomposition \cite{Gir}:
\begin{gather}\label{eq:6-08}
\mathbf{v} = \mathbf{v}_{0} + \nabla \varphi, \quad \mathbf{v}_{0} \in \mathbf{H}^{1}_{t,0}(\Omega),\quad \varphi \in H_{0}^{1}(\Omega)\cap H^{2}(\Omega), \\
\quad \Vert \mathbf{v}_{0} \Vert_{\mathbf{H}^{1}} \leq C\Vert \mathbf{v} \Vert_{\mathbf{H}^{1}}, \;\; \Vert\varphi\Vert_{H^{2}} \leq C \Vert \mathbf{v} \Vert_{\mathbf{H}^{1}}.
\end{gather}

We first prove that the vector potential $\mathbf{A}$ given by Definition~\ref{def:2.1} satisfies (\ref{subeq:6-004}), (\ref{subeq:6-005}), and (\ref{eq:6-04}) in Definition~\ref{def:2.2}. Obviously $\mathbf{A}$ satisfies (\ref{subeq:6-004}). Thanks to (\ref{subeq:6-002}) and (\ref{subeq:6-003}), we deduce that 
\begin{equation}\label{eq:6-09}
\big(\frac{\partial \mathbf{A}}{\partial t}, \, \nabla \varphi ) = 0, \; \forall \varphi \in H_{0}^{1}(\Omega)\cap H^{2}(\Omega).
\end{equation}
Then
\begin{equation}
\frac{\partial \mathbf{A}}{\partial t} \in C\big([0,T]; \,(\mathbf{H}_{t}^{1}(\Omega))^{\prime}\big)
\end{equation}
follows from (\ref{subeq:6-003}), (\ref{eq:6-08})-(\ref{eq:6-09}) and thus $\mathbf{A}$ satisfies (\ref{subeq:6-005}).
Since $\nabla \cdot \mathbf{A} = 0$, by using (\ref{eq:6-07}), we have
\begin{equation}\label{eq:6-010}
\begin{array}{lll}
{\displaystyle  \int_{0}^{T}\bigg[\big( \mathbf{A},\frac{\partial ^{2}  \widetilde{\mathbf{A}}}{\partial t^{2}})+\big(\nabla\times
 \mathbf{A},\nabla\times  \widetilde{\mathbf{A}}\big)+\big(\frac{\mathrm{i}}{2}(\Psi^{*}\nabla{\Psi}-\Psi\nabla{\Psi}^{*}), \,\widetilde{\mathbf{A}}\big) + \big( |\Psi|^{2}\mathbf{A},  \,\widetilde{\mathbf{A}}\big)  }\\[2mm]
 {\displaystyle \qquad \quad - \big(\frac{\partial \phi}{\partial t},\, \nabla \cdot \widetilde{\mathbf{A}}\big) \bigg]\mathrm{d}t= 0,   \; \forall \widetilde{\mathbf{A} }\in C_{0}^{2}\big((0,T);\,\nabla \big(H_{0}^{1}(\Omega)\cap H^{2}(\Omega)\big)\big). }
 \end{array}
\end{equation}
By applying (\ref{eq:6-02}), (\ref{eq:6-010}), and the Helmholtz docomposition (\ref{eq:6-08}),  we find that $\mathbf{A}$ satisfies (\ref{eq:6-04}). 

Next we assume that $\mathbf{A}$ is the vector potential given by Definition~\ref{def:2.2}. It is easy to see that $\mathbf{A}$ satisfies (\ref{subeq:6-003}) and (\ref{eq:6-03}). For any $\eta(t) \in C_{0}^{2}(0,T),\; \varphi \in H_{0}^{1}(\Omega)\cap H^{2}(\Omega)$, take $\widetilde{\mathbf{A}}=\eta\nabla\varphi$ in (\ref{eq:6-04}) and employ (\ref{eq:6-07}) to find
\begin{equation}
\int_{0}^{T}\big( \nabla\cdot \mathbf{A},\, \varphi)\frac{{\rm d}^{2} \eta}{{\rm d} t^{2}}\,\mathrm{d}t= 0,
\end{equation}
which implies that
\begin{equation}
\nabla \cdot \mathbf{A} = 0.
\end{equation}
Consequently $\mathbf{A}$ satisfies (\ref{subeq:6-002}) and we complete the proof of this theorem.
\end{proof}
\begin{remark}
Theorem~\ref{thm6-00} shows that weak solutions \uppercase\expandafter{\romannumeral2} satisfy $\nabla \cdot \mathbf{A} = 0$ implicity.
\end{remark}
 
Next we use the Galerkin method and compactness arguments to prove the existence of weak solutions to (\ref{eq:1.8})-(\ref{eq:1.9}). 

We first introduce two lemmas to construct finite dimensional subspaces of $\mathcal{H}_{0}^{1}(\Omega)$, $\mathbf{H}^{1}_{t,0}(\Omega)$, and $H^{1}_{0}(\Omega)$.
\begin{lemma}\label{lem6-0}
Suppose that $\Omega\subset \mathbb{R}^d (d= 2,3) $ is a bounded smooth domain. Then there exists a sequence $\{u_k\}_{k=1}^{\infty}$ being an orthogonal basis of $H_{0}^{1}(\Omega)$ as well as an  orthonormal basis of $L^{2}(\Omega)$. Here $u_k \in H_{0}^{1}(\Omega)$ is an eigenfunction corresponding to $\lambda_k$:
\begin{equation*}
\left\{
\begin{array}{@{}l@{}}
{\displaystyle -\Delta u_k = \lambda_k u_k \quad in \;\Omega, }\\[2mm]
{\displaystyle u_k = 0 \quad on \;\partial\Omega,}
\end{array}
\right.
\end{equation*}
for $k = 1,2\cdots.$
\end{lemma}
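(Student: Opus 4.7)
The plan is to realize $\{u_k\}$ as the eigenfunctions of the Dirichlet Laplacian via the classical spectral theorem for compact self-adjoint operators. First, I would define the solution operator $T: L^2(\Omega) \to L^2(\Omega)$ by $Tf = u$, where $u \in H_0^1(\Omega)$ is the unique weak solution of $-\Delta u = f$ in $\Omega$ with $u = 0$ on $\partial\Omega$. Existence and uniqueness in $H_0^1(\Omega)$ follow from the Lax--Milgram lemma applied to the bilinear form $a(u,v) = (\nabla u, \nabla v)$, whose coercivity on $H_0^1(\Omega)$ is guaranteed by the Poincar\'e inequality. Viewed as a map $L^2(\Omega) \to L^2(\Omega)$, $T$ factors through $H_0^1(\Omega)$, and the Rellich--Kondrachov theorem (applicable since $\Omega$ is bounded and smooth) gives the compact embedding $H_0^1(\Omega) \hookrightarrow L^2(\Omega)$. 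Therefore $T$ is a compact operator on $L^2(\Omega)$.

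Next I would check that $T$ is self-adjoint and positive. For $f, g \in L^2(\Omega)$ with $u = Tf$ and $v = Tg$, integration by parts yields
\begin{equation*}
(Tf, g) = (u, -\Delta v) = (\nabla u, \nabla v) = (-\Delta u, v) = (f, Tg),
\end{equation*}
and taking $g = f$ shows $(Tf, f) = \|\nabla u\|_{L^2}^2 \geq 0$, with equality only if $u \equiv 0$, hence $f \equiv 0$. The spectral theorem for compact self-adjoint operators then furnishes an orthonormal basis $\{u_k\}_{k=1}^\infty$ of $L^2(\Omega)$ and positive eigenvalues $\mu_k \searrow 0$ with $T u_k = \mu_k u_k$. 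Setting $\lambda_k = 1/\mu_k$ and noting that $u_k = T(\lambda_k u_k) \in H_0^1(\Omega)$ gives $-\Delta u_k = \lambda_k u_k$ in the weak sense, together with the zero Dirichlet boundary condition.

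Finally, I would verify that $\{u_k\}$ is an orthogonal basis of $H_0^1(\Omega)$. Orthogonality with respect to the inner product $(\nabla \cdot, \nabla \cdot)$ follows from
\begin{equation*}
(\nabla u_j, \nabla u_k) = (-\Delta u_j, u_k) = \lambda_j (u_j, u_k) = 0 \quad \text{for } j \neq k.
\end{equation*}
For completeness in $H_0^1(\Omega)$, given $v \in H_0^1(\Omega)$, write $v = \sum_k c_k u_k$ in $L^2(\Omega)$ with $c_k = (v, u_k)$. The partial sums $v_N = \sum_{k=1}^N c_k u_k$ are the $L^2$-orthogonal projections of $v$ onto $\mathrm{span}\{u_1,\ldots,u_N\}$, and using $(\nabla v, \nabla u_k) = \lambda_k (v, u_k) = \lambda_k c_k$ one shows that $v_N$ also coincides with the $H_0^1$-orthogonal projection. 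A Bessel-type argument then gives $\sum_k \lambda_k |c_k|^2 \leq \|\nabla v\|_{L^2}^2 < \infty$, and passing $N \to \infty$ yields $v_N \to v$ in $H_0^1(\Omega)$.

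The argument is entirely standard; the only subtlety worth flagging is that the ``orthogonal basis of $H_0^1$'' statement requires the projection identification in the last paragraph rather than just compactness, and requires smoothness of $\partial\Omega$ only indirectly through the Lax--Milgram/Rellich framework (in fact Lipschitz would suffice here). The hypothesis that $\Omega$ is smooth is compatible with what is needed in later sections where higher regularity of eigenfunctions is invoked.
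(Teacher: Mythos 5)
Your proof is correct, and it is essentially the argument the paper relies on: the paper does not prove Lemma~\ref{lem6-0} itself but simply cites Evans, where the result is established exactly as you do, by applying the spectral theorem for compact self-adjoint operators to the inverse Dirichlet Laplacian obtained from Lax--Milgram and Rellich--Kondrachov. Your additional care with the $H_0^1$-completeness step (identifying the partial sums with the $H_0^1$-orthogonal projections) is the right way to close that part of the statement, and your remark that Lipschitz regularity of $\partial\Omega$ would suffice is accurate.
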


The proof of Lemma~\ref{lem6-0} is given in \cite{evans}. It is worth pointing out that the conclusion is also true for complex-valued functions, i.e. there exists a sequence $\{\varphi_k\}_{k=1}^{\infty}$ being an orthogonal basis of $\mathcal{H}_{0}^{1}(\Omega)$ as well as an  orthonormal basis of $\mathcal{L}^{2}(\Omega)$.

\begin{lemma}\label{lem6-1}
Suppose that $\Omega\subset \mathbb{R}^d (d= 2,3) $ is a bounded smooth domain. Then there exists an orthonormal basis $\{\mathbf{v}_k\}_{k=1}^{\infty}$ of $\mathbf{L}^{2}(\Omega)$, where $\mathbf{v}_k \in \mathbf{H}^{1}_{t,0}(\Omega)$ is an eigenfunction corresponding to $\mu_k$:
\begin{equation*}
\left\{
\begin{array}{@{}l@{}}
{\displaystyle \nabla\times\nabla\times\mathbf{v}_k = \mu_k \mathbf{v}_k \quad in \;\Omega, }\\[2mm]
{\displaystyle \mathbf{v}_k \times\mathbf{n}= 0 \quad on \;\partial\Omega,}
\end{array}
\right.
\end{equation*}
for $k = 1,2\cdots.$ Furthermore, $\{\mathbf{v}_k\}_{k=1}^{\infty}$ is an orthogonal basis of $\mathbf{H}^{1}_{t,0}(\Omega)$.
\end{lemma}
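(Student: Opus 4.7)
The plan is to realize the double-curl operator as the inverse of a compact, self-adjoint, positive operator on the solenoidal $L^2$-space and then apply the spectral theorem. I start with the continuous, symmetric bilinear form
\[
 a(\mathbf{u},\mathbf{v}) \;=\; (\nabla\times\mathbf{u},\,\nabla\times\mathbf{v})
\]
on $\mathbf{H}^{1}_{t,0}(\Omega)\times\mathbf{H}^{1}_{t,0}(\Omega)$. By the equivalence of the semi-norm $|\cdot|_{\mathbf{H}^{1}_{t,0}}=\|\nabla\times\cdot\|_{\mathbf{L}^{2}}$ with the full $\mathbf{H}^{1}$-norm on this space, quoted from \cite{Gir} just before the lemma, $a$ is coercive and continuous. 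This is the one external fact I need; everything else is abstract Hilbert-space machinery.

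Next I introduce the solution operator $T$. For each $\mathbf{f}\in\mathbf{L}^{2}_{0}(\Omega)$, the Lax--Milgram lemma yields a unique $T\mathbf{f}\in\mathbf{H}^{1}_{t,0}(\Omega)$ with
\[
 a(T\mathbf{f},\mathbf{v})=(\mathbf{f},\mathbf{v}),\qquad \forall\,\mathbf{v}\in\mathbf{H}^{1}_{t,0}(\Omega),
\]
and $\|T\mathbf{f}\|_{\mathbf{H}^{1}}\le C\|\mathbf{f}\|_{\mathbf{L}^{2}}$. Viewed as an operator $T:\mathbf{L}^{2}_{0}(\Omega)\to\mathbf{L}^{2}_{0}(\Omega)$, it is compact (by Rellich--Kondrachov applied to $\mathbf{H}^{1}(\Omega)\hookrightarrow\mathbf{L}^{2}(\Omega)$), self-adjoint (from the symmetry of $a$ and the definition $(T\mathbf{f},\mathbf{g})=a(T\mathbf{f},T\mathbf{g})$), and positive. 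The spectral theorem then delivers an orthonormal basis $\{\mathbf{v}_{k}\}_{k=1}^{\infty}$ of $\mathbf{L}^{2}_{0}(\Omega)$ consisting of eigenfunctions $T\mathbf{v}_{k}=\sigma_{k}\mathbf{v}_{k}$ with $\sigma_{k}>0$, $\sigma_{k}\to 0$. Setting $\mu_{k}=1/\sigma_{k}$, the defining identity for $T$ rewrites as the weak form of $\nabla\times\nabla\times\mathbf{v}_{k}=\mu_{k}\mathbf{v}_{k}$ with $\mathbf{v}_{k}\times\mathbf{n}=0$ on $\partial\Omega$ encoded through membership in $\mathbf{H}^{1}_{t,0}(\Omega)$.

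To obtain the second claim, $\mathbf{H}^{1}_{t,0}$-orthogonality follows directly because
\[
 a(\mathbf{v}_{k},\mathbf{v}_{l})=\mu_{k}(\mathbf{v}_{k},\mathbf{v}_{l})=0 \quad \text{for } k\neq l,
\]
and since the curl-semi-norm is equivalent to the $\mathbf{H}^{1}$-norm on $\mathbf{H}^{1}_{t,0}(\Omega)$, this is orthogonality in the $\mathbf{H}^{1}$ inner product as well. Completeness in $\mathbf{H}^{1}_{t,0}(\Omega)$ comes from a standard density argument: if $\mathbf{w}\in\mathbf{H}^{1}_{t,0}(\Omega)$ is $a$-orthogonal to every $\mathbf{v}_{k}$, then $(\mathbf{w},\mathbf{v}_{k})=\mu_{k}^{-1}a(\mathbf{w},\mathbf{v}_{k})=0$, and $\mathbf{L}^{2}$-completeness of $\{\mathbf{v}_{k}\}$ in $\mathbf{L}^{2}_{0}(\Omega)$ forces $\mathbf{w}=0$.

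The main obstacle, or rather the main subtlety, is the apparent mismatch between the statement ``orthonormal basis of $\mathbf{L}^{2}(\Omega)$'' and the fact that every $\mathbf{v}_{k}$ must be divergence-free (take the divergence of the eigen-equation and use $\mu_{k}\neq 0$); thus these functions can at best form an orthonormal basis of the solenoidal subspace $\mathbf{L}^{2}_{0}(\Omega)$, not of all of $\mathbf{L}^{2}(\Omega)$. I would interpret the statement in this sense, which is exactly what the subsequent Galerkin construction for the divergence-free vector potential $\mathbf{A}$ requires. A secondary point is that the eigen-equation $\nabla\times\nabla\times\mathbf{v}_{k}=\mu_{k}\mathbf{v}_{k}$ should be understood in the weak (variational) sense; upgrading to a classical identity would require elliptic regularity for the Maxwell system on a smooth $\Omega$, but this is not needed for any subsequent use in the paper.
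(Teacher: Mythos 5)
Your proof follows essentially the same route as the paper's: both invoke Lax--Milgram for the coercive curl--curl form, note that the resulting solution operator is compact and self-adjoint thanks to the compact embedding of $\mathbf{H}^{1}_{t,0}(\Omega)$ into $\mathbf{L}^{2}(\Omega)$, and apply the Hilbert--Schmidt/spectral theorem. Your restriction of the solution operator to $\mathbf{L}^{2}_{0}(\Omega)$ is in fact the more precise reading---the paper's claim of a basis of all of $\mathbf{L}^{2}(\Omega)$ overlooks that gradient fields lie in the kernel of the solution operator---and your completeness argument in $\mathbf{H}^{1}_{t,0}(\Omega)$ supplies the step the paper dismisses as straightforward.
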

\begin{proof}
Let $L:\mathbf{H}^{1}_{t,0}(\Omega)\rightarrow \big(\mathbf{H}^{1}_{t,0}(\Omega)\big)^{\prime}$ be defined by
\begin{equation*}
(L\mathbf{u},\,\mathbf{v}) = (\nabla\times\mathbf{u},\,\nabla\times\mathbf{v}),\quad \forall \mathbf{u},\mathbf{v}\in \mathbf{H}^{1}_{t,0}(\Omega).
\end{equation*}
By the Lax-Milgram theorem, $L^{-1}: \big(\mathbf{H}^{1}_{t,0}(\Omega)\big)^{\prime}\rightarrow \mathbf{H}^{1}_{t,0}(\Omega)$ exists and is bounded. Since $\mathbf{H}^{1}_{t,0}(\Omega)$ is compactly embedded into $\mathbf{L}^{2}(\Omega)$, $S=L^{-1}$ is a bounded, linear, compact operator mapping $\mathbf{L}^{2}(\Omega)$ into itself. It is easy to show that $S$ is self-adjoint. Then by the Hilbert-Schmidt theorm, there exists a countable orthonormal basis $\{\mathbf{v}_k\}_{k=1}^{\infty}$ of $\mathbf{L}^{2}(\Omega)$ consisting  of eigenfunctions of $S$. The proof of $\{\mathbf{v}_k\}_{k=1}^{\infty}$ being an orthogonal basis of $\mathbf{H}^{1}_{t,0}(\Omega)$ is straightfoward.
\end{proof}

Let $\mathcal{Z}_n$, $\mathbf{Z}_{n}$, and $Z_{n}$ be n-dimensional subspaces of $\mathcal{H}_{0}^{1}(\Omega)$, $\mathbf{H}^{1}_{t,0}(\Omega)$, and $H^{1}_{0}(\Omega)$, respectively, 
\begin{equation*}
\mathcal{Z}_n = {\rm span}\{\varphi_1,\cdots,\varphi_n\}, \quad
 \mathbf{Z}_n = {\rm span}\{\mathbf{v}_1,\cdots,\mathbf{v}_n\},\quad
 Z_{n} = {\rm span}\{u_1,\cdots, u_n\},
\end{equation*}
where $\{\varphi_k\}_{k=1}^{\infty}$, $\{\mathbf{v}_k\}_{k=1}^{\infty}$, and $\{u_k\}_{k=1}^{\infty}$ are given in Lemma~\ref{lem6-0} and \ref{lem6-1}.

For each $n$, we can construct the Galerkin approximate solutions of weak solutions to the M-S-C system in the sense of Definition~\ref{def:2.1} as follows.

Find $\Psi_n(t) = \sum_{j=1}^{n}{h_{jn}(t)\varphi_j \in \mathcal{Z}_n}$, $ \mathbf{A}_{n}(t)= \sum_{j=1}^{n}{g_{jn}(t)\mathbf{v}_j\in\mathbf{Z}_n}$, and $\phi_n(t) = \sum_{j=1}^{n}{f_{jn}(t)u_j \in Z_n}$ such that
\begin{equation}\label{eq:6-2}
\left\{
\begin{array}{@{}l@{}}
{\displaystyle
 -\mathrm{i}(\frac{\partial \Psi_n}{\partial t},\varphi_j)+
\frac{1}{2}(\left(\mathrm{i}\nabla +\mathbf{A}_n\right)\Psi_n,\left(\mathrm{i}\nabla +\mathbf{A}_n\right)\varphi_j) + (V\Psi_n,\varphi_j) + (\phi_{n}\Psi_{n}, \varphi_j) = 0, }\\[2mm]
{\displaystyle ( \frac{\partial ^{2}\mathbf{A}_n}{\partial t^{2}},\mathbf{v}_j)+(\nabla\times
 \mathbf{A}_n,\nabla\times\mathbf{v}_j) +\big(\frac{\mathrm{i}}{2}(\Psi_n^{*}\nabla{\Psi_n}-\Psi_n\nabla{\Psi}_n^{*}),\mathbf{v}_j\big) }\\[2mm]
 {\displaystyle \qquad \qquad + ( |\Psi_n|^{2}\mathbf{A}_n,\mathbf{v}_j)=0, }\\[2mm]
 {\displaystyle (\nabla \phi_{n},\,\nabla u_j) = (|\Psi_{n}|^{2},\,u_j), }\\[2mm]
{\displaystyle \Psi_n(0) = \mathcal{P}_n\Psi_0, \,\,\mathbf{A}_n(0) = \mathbf{P}_n\mathbf{A}_0,\,\, \frac{\partial\mathbf{A}_n}{\partial t}(0) = \mathbf{P}_n\mathbf{A}_1}
\end{array}
\right.
\end{equation}
for any $t\in(0,T)$, $j = 1, 2,\cdots,n$. Here $ \mathcal{P}_n$ and $\mathbf{P}_n$ denote the orthogonal projection onto $\mathcal{Z}_n$ and $\mathbf{Z}_n$, respectively.

Using the local existence and uniqueness theory on ODEs,  we can show that the nonlinear differential system (\ref{eq:6-2}) has a unique local solution defined on some interval $[0, T_n]$. Next we derive some $a \,\,priori$ estimates to extend the local solution to a global solution defined on  $[0, T]$. In this paper, the following lemma will be used frequently.
\begin{lemma}\label{lem6-2}
Let $2<p<6$. Suppose that $\Omega\subset \mathbb{R}^d$, $ d\geq 2 $ is a bounded Lipschitz domain. Then for each $\epsilon > 0$, there exists some constant $C_{\epsilon}$ depending on $\epsilon$ (and on $p$ and $\Omega$) such that
\begin{equation*}
\Vert u \Vert_{\mathcal{L}^p}\leqslant \epsilon \Vert \nabla u \Vert_{\mathbf{L}^{2}} + C_{\epsilon} \Vert u \Vert_{\mathcal{L}^{2}},\qquad \forall u \in \mathcal{H}_{0}^{1}(\Omega).
\end{equation*}
\end{lemma}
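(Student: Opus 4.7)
The plan is to derive this as a standard multiplicative Gagliardo--Nirenberg interpolation inequality followed by Young's inequality, using only the Sobolev embedding available on a bounded Lipschitz domain. The strict hypothesis $p<6$ will be what gives a nontrivial interpolation exponent strictly less than one, which in turn is exactly what allows Young's inequality to shift an arbitrarily small constant onto $\|\nabla u\|_{\mathbf{L}^2}$.

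First, I would fix an auxiliary exponent $q$ with $p<q\le 6$ when $d=3$ and $p<q<\infty$ when $d=2$. On a bounded Lipschitz domain the Sobolev embedding $H^{1}(\Omega)\hookrightarrow L^{q}(\Omega)$ holds for these $q$, and combined with the Poincar\'e inequality on $\mathcal{H}_{0}^{1}(\Omega)$ this yields
\begin{equation*}
\|u\|_{\mathcal{L}^{q}}\le C\,\|\nabla u\|_{\mathbf{L}^{2}}, \qquad \forall u\in\mathcal{H}_{0}^{1}(\Omega),
\end{equation*}
with $C=C(p,\Omega)$. Since $2<p<q$, H\"older's inequality gives the interpolation identity
\begin{equation*}
\|u\|_{\mathcal{L}^{p}}\le \|u\|_{\mathcal{L}^{2}}^{1-\theta}\,\|u\|_{\mathcal{L}^{q}}^{\theta},\qquad \frac{1}{p}=\frac{1-\theta}{2}+\frac{\theta}{q},
\end{equation*}
where $\theta\in(0,1)$ precisely because the inequalities $2<p<q$ are strict. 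Substituting the Sobolev bound produces the multiplicative estimate
\begin{equation*}
\|u\|_{\mathcal{L}^{p}}\le C\,\|\nabla u\|_{\mathbf{L}^{2}}^{\theta}\,\|u\|_{\mathcal{L}^{2}}^{1-\theta}.
\end{equation*}

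To conclude, I would apply Young's inequality with conjugate exponents $1/\theta$ and $1/(1-\theta)$, writing $C\,\|\nabla u\|^{\theta}\|u\|^{1-\theta}=\bigl(\delta\|\nabla u\|\bigr)^{\theta}\bigl(C\delta^{-\theta}\|u\|\bigr)^{1-\theta}\cdot\text{const}$ and choosing $\delta>0$ so small that the $\|\nabla u\|_{\mathbf{L}^{2}}$ coefficient is at most $\epsilon$; the remaining factor is then absorbed into a (large) constant $C_{\epsilon}$ multiplying $\|u\|_{\mathcal{L}^{2}}$. No serious obstacle arises: the two checkpoints are making sure $q$ is admissible for the Sobolev embedding in the given dimension (which is why the argument splits trivially on $d=2$ vs $d=3$), and verifying strictness $\theta<1$, both of which are immediate from $p<6$. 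The result extends to complex-valued $u\in\mathcal{H}_{0}^{1}(\Omega)$ by applying the real-valued statement to $\mathrm{Re}\,u$ and $\mathrm{Im}\,u$ separately.
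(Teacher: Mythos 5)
Your proof is correct, but it takes a genuinely different route from the paper. The paper proves this lemma by a compactness argument: it quotes the Lions--Ehrling lemma (its Lemma~\ref{lem6-3}, from Temam), which states that for Banach spaces $W_0\subset W\subset W_1$ with $W_0\hookrightarrow W$ compact and $W\hookrightarrow W_1$ continuous one has $\Vert u\Vert_{W}\le\epsilon\Vert u\Vert_{W_0}+C_\epsilon\Vert u\Vert_{W_1}$, and applies it with $W_0=\mathcal{H}_0^1(\Omega)$, $W=\mathcal{L}^p(\Omega)$, $W_1=\mathcal{L}^2(\Omega)$; the hypothesis $p<6$ enters there as the condition for the Rellich--Kondrachov embedding $\mathcal{H}_0^1\hookrightarrow\mathcal{L}^p$ to be compact, and Poincar\'e's inequality then replaces $\Vert u\Vert_{\mathcal{H}^1}$ by $\Vert\nabla u\Vert_{\mathbf{L}^2}$. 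You instead use the H\"older (Lyapunov) interpolation $\Vert u\Vert_{\mathcal{L}^p}\le\Vert u\Vert_{\mathcal{L}^2}^{1-\theta}\Vert u\Vert_{\mathcal{L}^q}^{\theta}$, the Sobolev bound $\Vert u\Vert_{\mathcal{L}^q}\le C\Vert\nabla u\Vert_{\mathbf{L}^2}$, and Young's inequality, with $p<6$ entering through $\theta<1$. Your argument is constructive where the paper's is not: it yields an explicit dependence $C_\epsilon\sim\epsilon^{-\theta/(1-\theta)}$ and avoids any appeal to compactness, at the cost of being restricted to the Hilbert-scale situation at hand, whereas the Ehrling route is a one-line application of a general abstract lemma. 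One shared caveat: both proofs really only work for $d=2,3$ (your case split makes this explicit); for $d\ge 4$ the embedding $H_0^1\hookrightarrow L^p$ fails for $p$ close to $6$, so the hypothesis ``$d\ge 2$'' in the statement should be read as $d=2,3$, which is all the paper ever uses.
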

Lemma~\ref{lem6-0} can be proved by applying Sobolev's embedding thorems, Poincar$\acute{\rm e}$'s inequality, and the following lemma in \cite{temma}.
\begin{lemma}\label{lem6-3}
Let $W_0$, $W$, and $W_1$ be three Banach spaces such that $W_0\subset W \subset W_1$, the injection of $W$ into $W_1$ being continuous, and the injection of  $W_0$ into $W$ is compact. Then for each $\epsilon > 0$, there exists some constant $C_{\epsilon}$ depending on $\epsilon$ (and on the spaces $W_0$, $W$, $W_1$) such that
\begin{equation*}
\Vert u \Vert_{W}\leqslant \epsilon \Vert u \Vert_{W_{0}} + C_{\epsilon} \Vert u \Vert_{W_{1}},\qquad \forall u \in W_{0}.
\end{equation*}
\end{lemma}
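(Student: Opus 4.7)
The plan is to prove the abstract Ehrling-type inequality by a contradiction argument exploiting the compact embedding $W_0\hookrightarrow W$ and the continuous embedding $W\hookrightarrow W_1$. This is the only way I know to extract an ``$\epsilon$-small'' control in the intermediate norm from purely qualitative embedding hypotheses, since no interpolation inequality is assumed.

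First I would assume, for contradiction, that the conclusion fails: there exists some $\epsilon_0>0$ such that for every positive integer $n$ one can find $u_n\in W_0$ with
\begin{equation*}
\|u_n\|_W > \epsilon_0 \|u_n\|_{W_0} + n\,\|u_n\|_{W_1}.
\end{equation*}
In particular $u_n\neq 0$, so by the positive homogeneity of all three norms I may rescale and assume $\|u_n\|_W = 1$ for every $n$. The defining inequality then yields the two bounds $\|u_n\|_{W_0} < \epsilon_0^{-1}$ and $\|u_n\|_{W_1} < 1/n$, so the sequence $\{u_n\}$ is bounded in $W_0$ while tending to $0$ in $W_1$.

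Next I would invoke the compactness of the embedding $W_0\hookrightarrow W$: a bounded sequence in $W_0$ admits a subsequence, still denoted $\{u_n\}$, that converges to some $u\in W$ in the $W$-norm. Applying the continuous embedding $W\hookrightarrow W_1$ to this convergence gives $u_n\to u$ in $W_1$ as well. On the other hand I already know $u_n\to 0$ in $W_1$, so uniqueness of limits in $W_1$ forces $u=0$. Hence $\|u_n\|_W\to \|u\|_W = 0$, which contradicts the normalization $\|u_n\|_W=1$ for all $n$. This contradiction establishes the existence of the desired $C_\epsilon$ for every $\epsilon>0$.

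There is no real obstacle here beyond keeping track of the two embeddings and their directions; the argument is the standard Ehrling/Lions lemma. The only subtle point worth flagging explicitly in the write-up is that one must apply the \emph{continuous} embedding $W\hookrightarrow W_1$ (not $W_0\hookrightarrow W_1$) to transfer the $W$-limit into a $W_1$-limit, which is what makes the conclusion $u=0$ legitimate. After that, the proof of the concrete Lemma in the paper follows by setting $W_0=\mathcal{H}^1_0(\Omega)$, $W=\mathcal{L}^p(\Omega)$, $W_1=\mathcal{L}^2(\Omega)$ for $2<p<6$ and invoking the Rellich--Kondrachov compact embedding together with Poincar\'e's inequality.
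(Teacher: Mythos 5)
Your proof is correct: the contradiction argument with normalization $\Vert u_n\Vert_{W}=1$, extraction of a $W$-convergent subsequence via the compact embedding $W_0\hookrightarrow W$, and identification of the limit as $0$ through the continuous embedding $W\hookrightarrow W_1$ is the standard and complete proof of this Ehrling-type lemma. The paper itself offers no proof — it simply cites the result from Temam's book — so there is no alternative argument to compare against; your write-up (including the remark on how the lemma is then applied with $W_0=\mathcal{H}^1_0(\Omega)$, $W=\mathcal{L}^p(\Omega)$, $W_1=\mathcal{L}^2(\Omega)$) supplies exactly what the paper leaves implicit.
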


We define the energy  $\mathcal{E}_{n}(t)$ of the approximate system (\ref{eq:6-2}) as follows.
\begin{equation}\label{eq:6-3}
\begin{array}{@{}l@{}}
{\displaystyle \mathcal{E}_{n}(t) = \int_{\Omega}\big(\frac{1}{2}\big\vert \left[i\nabla + \mathbf{A}_{n}\right]\Psi_{n}\big\vert^{2} + V|\Psi_{n}|^{2} + \frac{1}{2}\big|\frac{\partial \mathbf{A}_{n}}{\partial t}\big|^{2} }\\[2mm]
 {\displaystyle \qquad \qquad \quad  + \frac{1}{2}|\nabla \times \mathbf{A}_{n}|^{2} + \frac{1}{2}|\nabla \phi_{n}|^{2} \big)\mathrm{d}\mathbf{x}.  }
 \end{array}
\end{equation}

\begin{lemma}\label{lem6-4}
For any $ t \in(0,T] $, if the solution $(\Psi_n(t), \mathbf{A}_{n}(t),\phi_{n}(t))$ of the approximate system (\ref{eq:6-2}) exists, we have the conservation of total charge and energy
\begin{equation}\label{eq:6-3-0}
\Vert\Psi_n(t)\Vert^{2}_{\mathcal{L}^{2}} = \Vert\Psi_n(0)\Vert^{2}_{\mathcal{L}^{2}},
\qquad \mathcal{E}_{n}(t) = \mathcal{E}_{n}(0).
\end{equation}
\end{lemma}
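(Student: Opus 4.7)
The plan is to exploit the fact that the Galerkin spaces $\mathcal{Z}_n$, $\mathbf{Z}_n$ and $Z_n$ are time-independent, so that $\Psi_n(t)$, $\partial_t \Psi_n(t)$, $\partial_t \mathbf{A}_n(t)$ and $\phi_n(t)$ are themselves admissible test functions in (\ref{eq:6-2}) at each time $t$. Both identities in (\ref{eq:6-3-0}) will be obtained by such tests, followed by taking appropriate real/imaginary parts and combining with one differentiation in $t$ of the discrete Poisson equation.

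For the charge conservation, I would multiply the $j$-th discrete Schr\"odinger equation in (\ref{eq:6-2}) by $\overline{h_{jn}(t)}$ and sum over $j$, which amounts to using $\Psi_n$ as a test function. Since $V$, $\phi_n$ and $\mathbf{A}_n$ are real valued, the three quantities $\tfrac12((\mathrm{i}\nabla+\mathbf{A}_n)\Psi_n,(\mathrm{i}\nabla+\mathbf{A}_n)\Psi_n)$, $(V\Psi_n,\Psi_n)$ and $(\phi_n\Psi_n,\Psi_n)$ are real, so taking the imaginary part leaves only $-\mathrm{Re}(\partial_t\Psi_n,\Psi_n) = -\tfrac12\tfrac{d}{dt}\|\Psi_n\|_{\mathcal{L}^2}^2 = 0$, which integrates to the first equality in (\ref{eq:6-3-0}).

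For the energy conservation, I would test the Schr\"odinger equation with $2\,\partial_t\Psi_n$ (multiplying the $j$-th equation by $2\overline{\partial_t h_{jn}(t)}$ and summing) and take the real part, test the wave equation with $\partial_t\mathbf{A}_n$, and differentiate the Poisson equation in $t$ and test the result with $\phi_n$. Using the identity $(\mathrm{i}\nabla+\mathbf{A}_n)\partial_t\Psi_n = \partial_t[(\mathrm{i}\nabla+\mathbf{A}_n)\Psi_n] - (\partial_t\mathbf{A}_n)\Psi_n$, the Schr\"odinger test yields $\tfrac12\tfrac{d}{dt}\|(\mathrm{i}\nabla+\mathbf{A}_n)\Psi_n\|^2 + \tfrac{d}{dt}\!\int V|\Psi_n|^2\,\mathrm{d}\mathbf{x} - \mathrm{Re}((\mathrm{i}\nabla+\mathbf{A}_n)\Psi_n,(\partial_t\mathbf{A}_n)\Psi_n) + 2\,\mathrm{Re}(\phi_n\Psi_n,\partial_t\Psi_n) = 0$; the wave test yields $\tfrac12\tfrac{d}{dt}\|\partial_t\mathbf{A}_n\|^2 + \tfrac12\tfrac{d}{dt}\|\nabla\times\mathbf{A}_n\|^2 + (\tfrac{\mathrm{i}}{2}(\Psi_n^{*}\nabla\Psi_n - \Psi_n\nabla\Psi_n^{*}),\partial_t\mathbf{A}_n) + (|\Psi_n|^2\mathbf{A}_n,\partial_t\mathbf{A}_n) = 0$; and the differentiated Poisson relation gives $\tfrac12\tfrac{d}{dt}\|\nabla\phi_n\|^2 = \int\phi_n\,\partial_t|\Psi_n|^2\,\mathrm{d}\mathbf{x}$.

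The decisive step is the cancellation of the cross terms when these three identities are added. Expanding $\mathrm{Re}((\mathrm{i}\nabla+\mathbf{A}_n)\Psi_n,(\partial_t\mathbf{A}_n)\Psi_n)$ componentwise and using the identity $\tfrac{\mathrm{i}}{2}(\Psi_n^{*}\nabla\Psi_n-\Psi_n\nabla\Psi_n^{*}) = -\mathrm{Im}(\Psi_n^{*}\nabla\Psi_n)$ shows that this quantity equals exactly the sum of the last two integrals in the wave identity, so the matter--field couplings cancel. Likewise $2\,\mathrm{Re}(\phi_n\Psi_n,\partial_t\Psi_n) = \int \phi_n\,\partial_t|\Psi_n|^2\,\mathrm{d}\mathbf{x}$ matches the right-hand side of the Poisson identity and combines with $\tfrac12\tfrac{d}{dt}\|\nabla\phi_n\|^2$. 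What remains is precisely $\tfrac{d}{dt}\mathcal{E}_n(t) = 0$. The main point requiring care, more notational than mathematical, is the bookkeeping of complex conjugation in the sesquilinear $\mathcal{L}^2$ inner product together with the factor $2$ inserted in the Schr\"odinger test, which is what makes the coefficients of the matter terms in the conserved quantity agree with those defining $\mathcal{E}_n$ in (\ref{eq:6-3}); beyond that, the argument is purely algebraic and relies entirely on conforming Galerkin tests.
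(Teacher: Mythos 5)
Your proposal is correct and follows essentially the same route as the paper: test with $\Psi_n$ and take the imaginary part for charge conservation, then test with $\partial_t\Psi_n$ (real part), $\partial_t\mathbf{A}_n$, and the $t$-differentiated Poisson equation against $\phi_n$, and add so that the matter--field cross terms cancel via $\frac{\mathrm{i}}{2}(\Psi_n^{*}\nabla\Psi_n-\Psi_n\nabla\Psi_n^{*})=-\mathrm{Im}(\Psi_n^{*}\nabla\Psi_n)$. Your factor-of-$2$ bookkeeping in the Schr\"odinger test is the right normalization and reproduces the paper's identity (\ref{eq:6-5}) exactly.
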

\begin{proof}
$(\ref{eq:6-3-0})_{1}$ can be proved by multiplying the first equation of (\ref{eq:6-2}) by $h_{jn}$, summing $j=1,2,\cdots,n$, and taking the imaginary part.

To prove $(\ref{eq:6-3-0})_{2}$, we first multiply the first equation of (\ref{eq:6-2}) by $\frac{d}{dt} h_{jn}$, sum $j=1,2,\cdots,n$, and take the real part. We discover
\begin{equation}\label{eq:6-5}
\begin{array}{@{}l@{}}
{\displaystyle \frac{d}{dt}\int_{\Omega}\big(\frac{1}{2}\big\vert \left[i\nabla + \mathbf{A}_{n}\right]\Psi_{n}\big\vert^{2} + V|\Psi_{n}|^{2}\big)\,\mathrm{d}\mathbf{x}
- \big(|\Psi_n |^{2}\mathbf{A}_n,\,\frac{\partial \mathbf{A}_n}{\partial t}\big)}\\[2mm]
{\displaystyle\quad\quad-\big(\frac{\mathrm{i}}{2}(\Psi_n^{*}\nabla{\Psi_n}-\Psi_n\nabla{\Psi}_n^{*}),\,\frac{\partial \mathbf{A}_n}{\partial t}\big) + \big(\phi_{n},\, \frac{\partial}{\partial t} |\Psi_{n}|^{2}\big)=0 }.
\end{array}
\end{equation}
Multiplying the second equation of (\ref{eq:6-2}) by $\frac{d}{dt} g_{jn}$ ,  summation gives
\begin{equation}\label{eq:6-5-0}
 \begin{array}{@{}l@{}}
{\displaystyle \frac{d}{dt}\int_{\Omega}\big(\frac{1}{2}\big|\frac{\partial \mathbf{A}_{n}}{\partial t}\big|^{2} + \frac{1}{2}|\nabla \times \mathbf{A}_{n}|^{2}\big)\,\mathrm{d}\mathbf{x}
+ \big(|\Psi_n |^{2}\mathbf{A}_n,\,\frac{\partial \mathbf{A}_n}{\partial t}\big)}\\[2mm]
{\displaystyle\quad\quad\quad\quad+\big(\frac{\mathrm{i}}{2}(\Psi_n^{*}\nabla{\Psi_n}-\Psi_n\nabla{\Psi}_n^{*}),\,\frac{\partial \mathbf{A}_n}{\partial t}\big) = 0 }.
\end{array}
\end{equation} 

Differentiating both sides of the third equation of (\ref{eq:6-2}) with respect to $t$, multiplying it by $f_{jn}$ and summing $j=1,2,\cdots,n$, we obtain 
\begin{equation}\label{eq:6-5-1}
\frac{1}{2}\frac{d}{dt}\int_{\Omega}|\nabla \phi_{n}|^{2} \,\mathrm{d}\mathbf{x} - \big(\frac{\partial}{\partial t} |\Psi_{n}|^{2},\,\phi_{n}\big) = 0.
\end{equation}
Adding (\ref{eq:6-5}), (\ref{eq:6-5-0}), and (\ref{eq:6-5-1}) together completes the proof of Lemma~\ref{lem6-1}.  \qquad\end{proof}

We now establish some estimates of the solution $(\Psi_n(t), \mathbf{A}_{n}(t),\phi_{n}(t))$.
\begin{theorem}\label{thm6-0}
For any $ t \in(0,T] $, if the solution $(\Psi_n(t), \mathbf{A}_{n}(t),\phi_{n}(t))$ exists, then it satisfies the estimates
\begin{equation}\label{eq:6-6}
\Vert \Psi_{n}\Vert_{\mathcal{H}^{1}} + \Vert \mathbf{A}_{n} \Vert_{\mathbf{H}^{1}} +\Vert \phi_{n} \Vert_{H^1} \leq C,
\end{equation}
\begin{equation}\label{eq:6-7}
\Vert \frac{\partial\Psi_{n}}{\partial t}\Vert_{\mathcal{H}^{-1}} + \Vert \frac{\partial\mathbf{A}_{n}}{\partial t} \Vert_{\mathbf{L}^{2}} +  \Vert \frac{\partial^{2}\mathbf{A}_{n}}{\partial t^{2}} \Vert_{(\mathbf{H}^{1}_{t,0})^{\prime}} +\Vert \frac{\partial\phi_{n} }{\partial t} \Vert_{L^2} \leq C,
\end{equation}
where $C$ is independent of $t$ and $n$.
\end{theorem}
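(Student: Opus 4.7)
The plan is to extract \eqref{eq:6-6} from the conservation laws of Lemma~\ref{lem6-4} and then bootstrap to the time-derivative estimates \eqref{eq:6-7} by testing each equation of \eqref{eq:6-2} against carefully chosen elements, exploiting the fact that the bases $\{\varphi_{k}\}$, $\{\mathbf{v}_{k}\}$, $\{u_{k}\}$ from Lemmas~\ref{lem6-0}--\ref{lem6-1} are orthogonal simultaneously in $L^{2}$ and in the corresponding energy space. First, charge conservation gives $\|\Psi_{n}(t)\|_{\mathcal{L}^{2}}\leq\|\Psi_{0}\|_{\mathcal{L}^{2}}$, while energy conservation supplies uniform-in-$t$ bounds on $\|(\mathrm{i}\nabla+\mathbf{A}_{n})\Psi_{n}\|_{\mathbf{L}^{2}}$, $\|\partial_{t}\mathbf{A}_{n}\|_{\mathbf{L}^{2}}$, $\|\nabla\times\mathbf{A}_{n}\|_{\mathbf{L}^{2}}$, and $\|\nabla\phi_{n}\|_{\mathbf{L}^{2}}$ (the $V|\Psi_{n}|^{2}$ contribution is harmless because $V\in L^{\infty}$). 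Since $\mathbf{A}_{n}\in\mathbf{Z}_{n}$ obeys $\nabla\cdot\mathbf{A}_{n}=0$, the seminorm equivalence recalled just after the definition of $\mathbf{H}^{1}_{t,0}$ yields $\|\mathbf{A}_{n}\|_{\mathbf{H}^{1}}\leq C$, and Poincaré delivers $\|\phi_{n}\|_{H^{1}}\leq C$. To recover $\|\Psi_{n}\|_{\mathcal{H}^{1}}$, I write $\mathrm{i}\nabla\Psi_{n}=(\mathrm{i}\nabla+\mathbf{A}_{n})\Psi_{n}-\mathbf{A}_{n}\Psi_{n}$ and bound $\|\mathbf{A}_{n}\Psi_{n}\|_{\mathbf{L}^{2}}\leq\|\mathbf{A}_{n}\|_{\mathbf{L}^{6}}\|\Psi_{n}\|_{\mathcal{L}^{3}}$ via Hölder; Lemma~\ref{lem6-2} with $p=3$ followed by a small-$\epsilon$ absorption then closes \eqref{eq:6-6}.

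For \eqref{eq:6-7}, the bound on $\|\partial_{t}\mathbf{A}_{n}\|_{\mathbf{L}^{2}}$ is already in hand. For $\|\partial_{t}\Psi_{n}\|_{\mathcal{H}^{-1}}$ I argue by duality: since $\partial_{t}\Psi_{n}\in\mathcal{Z}_{n}$, one has $(\partial_{t}\Psi_{n},\varphi)=(\partial_{t}\Psi_{n},\mathcal{P}_{n}\varphi)$ for every $\varphi\in\mathcal{H}^{1}_{0}$, and the first line of \eqref{eq:6-2} expresses this pairing as a sum of kinetic, potential, and electrostatic-coupling forms, each controlled via \eqref{eq:6-6} together with Sobolev embeddings; the decisive fact is that because $\{\varphi_{k}\}$ is simultaneously $\mathcal{L}^{2}$-orthonormal and $\mathcal{H}^{1}_{0}$-orthogonal, the projection $\mathcal{P}_{n}$ is $\mathcal{H}^{1}_{0}$-bounded with operator norm at most $1$. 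The bound on $\|\partial_{t}^{2}\mathbf{A}_{n}\|_{(\mathbf{H}^{1}_{t,0})^{\prime}}$ is produced by the same template from the second line of \eqref{eq:6-2}, now using Lemma~\ref{lem6-1} to obtain $\|\mathbf{P}_{n}\mathbf{v}\|_{\mathbf{H}^{1}_{t,0}}\leq\|\mathbf{v}\|_{\mathbf{H}^{1}_{t,0}}$ and controlling the curl-curl term, the current density, and the cubic coupling $|\Psi_{n}|^{2}\mathbf{A}_{n}$ by repeated Hölder and Sobolev estimates.

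The most delicate bound is $\|\partial_{t}\phi_{n}\|_{L^{2}}\leq C$, which I regard as the main obstacle. Differentiating the Poisson equation in \eqref{eq:6-2} in $t$ and using that $\{u_{k}\}$ diagonalises $-\Delta$ inside $Z_{n}$ gives $-\Delta\partial_{t}\phi_{n}=P_{n}\partial_{t}|\Psi_{n}|^{2}$, so expanding in the eigenbasis yields
\begin{equation*}
\|\partial_{t}\phi_{n}\|_{L^{2}}^{2}=\sum_{k=1}^{n}\lambda_{k}^{-2}\bigl|(\partial_{t}|\Psi_{n}|^{2},u_{k})\bigr|^{2}\leq\lambda_{1}^{-1}\|\partial_{t}|\Psi_{n}|^{2}\|_{H^{-1}}^{2}.
\end{equation*}
It therefore suffices to bound $\|\partial_{t}|\Psi_{n}|^{2}\|_{H^{-1}}$; using $\partial_{t}|\Psi_{n}|^{2}=2\,\mathrm{Re}(\overline{\Psi_{n}}\,\partial_{t}\Psi_{n})$ and, for $\psi\in H^{1}_{0}$, the identity $(\partial_{t}\Psi_{n},\Psi_{n}\psi)=(\partial_{t}\Psi_{n},\mathcal{P}_{n}(\Psi_{n}\psi))$ followed by substitution of the first Galerkin equation, the problem reduces to controlling $\|\mathcal{P}_{n}(\Psi_{n}\psi)\|_{\mathcal{H}^{1}}$ in terms of $\|\psi\|_{H^{1}}$. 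The difficulty is that in three dimensions the product of two $H^{1}$ functions need not belong to $H^{1}$, so I will avoid requiring an $H^{1}$ bound on the full product $\Psi_{n}\psi$ and instead redistribute the regularity through Hölder estimates (for example, $\|\Psi_{n}\psi\|_{\mathcal{L}^{2}}\leq\|\Psi_{n}\|_{\mathcal{L}^{3}}\|\psi\|_{L^{6}}$), Sobolev embeddings, and Lemma~\ref{lem6-2}, relying on the $\mathcal{L}^{2}$-boundedness of $\mathcal{P}_{n}$ for those terms that only need integrability and on its $\mathcal{H}^{1}$-boundedness only where the input already carries $\mathcal{H}^{1}$ regularity. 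Closing this last inequality is the delicate step of the proof.
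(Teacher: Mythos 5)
Your treatment of \eqref{eq:6-6} and of the first three terms in \eqref{eq:6-7} coincides with the paper's: conservation laws plus the seminorm equivalence and the absorption via Lemma~\ref{lem6-2} for \eqref{eq:6-6}, and for the time derivatives the duality argument using that $\mathcal{P}_n$ and $\mathbf{P}_n$ have operator norm at most $1$ on $\mathcal{H}^1_0$ and $\mathbf{H}^1_{t,0}$ thanks to the simultaneous orthogonality of the bases (this is exactly the paper's decomposition $\omega=\omega^1+\omega^2$). The gap is in the last estimate, $\Vert\partial_t\phi_n\Vert_{L^2}\leq C$, and it is precisely where you yourself stop short. Your spectral identity $\Vert \partial_{t}\phi_{n}\Vert_{L^{2}}^{2}=\sum_{k\le n}\lambda_{k}^{-2}\vert(\partial_{t}\vert\Psi_{n}\vert^{2},u_{k})\vert^{2}$ is correct, but the crude bound $\lambda_{k}^{-2}\leq\lambda_{1}^{-1}\lambda_{k}^{-1}$ discards exactly the derivative that makes the argument close: it reduces the problem to bounding $\Vert\partial_{t}\vert\Psi_{n}\vert^{2}\Vert_{H^{-1}}$, which is \emph{not} controllable from the available information. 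Indeed, testing $\partial_{t}\vert\Psi_{n}\vert^{2}=2\,\mathrm{Re}(\Psi_{n}^{*}\partial_{t}\Psi_{n})$ against $\psi\in H^{1}_{0}$ produces the product $\Psi_{n}\psi$, which for $\Psi_{n}\in\mathcal{H}^{1}$, $\psi\in H^{1}_{0}$ lies only in $\mathcal{W}^{1,3/2}$ in three dimensions; the pairing with $\partial_{t}\Psi_{n}\in\mathcal{H}^{-1}$ then cannot be estimated, and $\mathcal{P}_{n}$ has no uniform bound on $\mathcal{W}^{1,3/2}$ that would let you ``redistribute'' the regularity as you hope. Equivalently, the continuity equation gives $\partial_{t}\vert\Psi_{n}\vert^{2}$ bounded in $W^{-1,3/2}$, which does not embed into $H^{-1}$ when $d=3$.

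The repair is to keep the full weight $\lambda_{k}^{-2}$, i.e.\ to exploit that $(-\Delta)^{-1}$ gains \emph{two} derivatives, which is what the paper does: for $f\in L^{2}(\Omega)$ one solves the discrete Poisson problem $(\nabla\psi_{n},\nabla u)=(f,u)$ for all $u\in Z_{n}$, so that $(\partial_{t}\phi_{n},f)=(\partial_{t}\vert\Psi_{n}\vert^{2},\psi_{n})$, and then proves the discrete elliptic regularity $\Vert\psi_{n}\Vert_{H^{2}}\leq C\Vert f\Vert_{L^{2}}$ by using the eigenfunction relation $-\Delta u_{j}=\lambda_{j}u_{j}$ to show $\Vert\Delta\psi_{n}\Vert_{L^{2}}\leq\Vert f\Vert_{L^{2}}$ (here the smoothness of $\Omega$ enters). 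Since $H^{2}(\Omega)\hookrightarrow L^{\infty}(\Omega)\cap W^{1,3}(\Omega)$ for $d\le 3$, the product $\Psi_{n}\psi_{n}$ \emph{does} belong to $\mathcal{H}^{1}_{0}(\Omega)$ with $\Vert\Psi_{n}\psi_{n}\Vert_{\mathcal{H}^{1}}\leq C\Vert\Psi_{n}\Vert_{\mathcal{H}^{1}}\Vert\psi_{n}\Vert_{H^{2}}$, and the already-established bound $\Vert\partial_{t}\Psi_{n}\Vert_{\mathcal{H}^{-1}}\leq C$ finishes the estimate. You should replace your $H^{-1}$ reduction by this $H^{2}$-duality step; as written, the final inequality you flag as ``delicate'' is in fact false at the level of generality you reduce to.
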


\begin{proof}
By the definition of initial data $(\Psi_n(0), \mathbf{A}_{n}(0),\phi_{n}(0))$, it is easy to show that $\mathcal{E}_{n}(0) \leq C$. Thus by applying (\ref{eq:6-3-0}), we have 
\begin{equation}\label{eq:6-8}
\Vert \big(i\nabla + \mathbf{A}_{n}\big)\Psi_{n} \Vert_{\mathbf{L}^{2}} + \Vert \frac{\partial\mathbf{A}_{n}}{\partial t} \Vert_{\mathbf{L}^{2}} + \Vert \nabla \times \mathbf{A}_{n} \Vert_{\mathbf{L}^{2}} + \Vert 
\nabla \phi_{n} \Vert_{\mathbf{L}^{2}} \leq C.
\end{equation}
Since the semi-norm in $\mathbf{H}^{1}_{t,0}(\Omega)$ is equivalent to $\mathbf{H}^{1}$-norm, we get
\begin{equation}\label{eq:6-9}
\Vert \mathbf{A}_n\|_{\mathbf{H}^1} \leq C.
\end{equation}
Then Sobolev's imbedding theorem implies that
\begin{equation}\label{eq:6-10}
\Vert \mathbf{A}_n\|_{\mathbf{L}^p} \leq C ,
\end{equation}
with $ \, 1\leq p \leq 6$ for $d = 3$ and $ 1\leq p < \infty $ for $ d=2$.

Using  Lemma~\ref{lem6-0}, we further prove
\begin{equation}\label{eq:6-11}
{\displaystyle \|{\mathbf{A}}_{n}\Psi_{n}\|_{\mathbf{L}^2}\leq \|{\mathbf{A}}_{n}\|_{\mathbf{L}^6} \|\Psi_{n}\|_{\mathcal{L}^3} \leq C \|\Psi_{n}\|_{\mathcal{L}^3}
\leq C\|\Psi_{n}\|_{\mathcal{L}^2}+\frac{1}{2}\|\nabla\Psi_{n}\|_{\mathbf{L}^2}}\\[2mm]
{\displaystyle }.
\end{equation}
From (\ref{eq:6-8}), (\ref{eq:6-11}) and Lemma~\ref{lem6-1}, we deduce
\begin{equation*}
\begin{array}{@{}l@{}}
{\displaystyle \|\nabla\Psi_{n}\|_{\mathbf{L}^2}\leq \|\left(\mathrm{i}\nabla
+{\mathbf{A}}_{n}\right)\Psi_{n}\|_{\mathbf{L}^2}
+ \|{\mathbf{A}}_{n}\Psi_{n}\|_{\mathbf{L}^2}
\leq C+\frac{1}{2}\|\nabla\Psi_{n}\|_{\mathbf{L}^2}.}
\end{array}
\end{equation*}
Consequently, we obtain
\begin{equation}\label{eq:6-12}
\|\Psi_{n}\|_{\mathcal{H}^1} \leq C.
\end{equation}
By applying Poincar$\acute{\rm e}$'s inequality and (\ref{eq:6-8}), we see that
\begin{equation}\label{eq:6-13}
\|\phi_{n}\|_{{H}^1} \leq C.
\end{equation}
Therefore, (\ref{eq:6-6}) is proved by combining (\ref{eq:6-9}), (\ref{eq:6-12}), and (\ref{eq:6-13}).

To estimate $\Vert \frac{\partial\Psi_{n}}{\partial t}\Vert_{\mathcal{H}^{-1}}$, we first fix $\omega\in\mathcal{H}_{0}^{1}(\Omega)$ with $\Vert \omega \Vert_{\mathcal{H}_{0}^{1}}\leq 1 $. Note that $\{\varphi_n\}_{n=1}^{\infty}$ is an orthogonal basis of $\mathcal{H}_{0}^{1}(\Omega)$ as well as an  orthonormal basis of $\mathcal{L}^{2}(\Omega)$. Thus we can write $\omega = {\omega}^1 + {\omega}^2$, where ${\omega}^1\in \mathcal{Z}_{n} = {\rm span}\{\varphi_k\}_{k=1}^{n}$ and $({\omega}^2, \varphi_{k})  = 0$ for $k=1,\cdots,n.$ It is clear that $\Vert {\omega}^1 \Vert_{\mathcal{H}_{0}^{1}}\leq 1$. Then the first equation of (\ref{eq:6-2}) implies that 
\begin{equation*}
 \begin{array}{@{}l@{}}
{\displaystyle \big\langle \frac{\partial\Psi_{n}}{\partial t}, \, {\omega}\big\rangle_{\mathcal{H}^{-1},\,\mathcal{H}^{1}_{0}} = \big (\frac{\partial\Psi_{n}}{\partial t}, \, {\omega} \big) = \big (\frac{\partial\Psi_{n}}{\partial t}, \, {\omega}^{1} \big) 
 }\\[2mm]
{\displaystyle = -\frac{{\rm i}}{2}(\left(\mathrm{i}\nabla +\mathbf{A}_n\right)\Psi_n,\left(\mathrm{i}\nabla +\mathbf{A}_n\right) {\omega}^{1})  -{\rm i} (V\Psi_n, {\omega}^{1}) -{\rm i} (\phi_{n}\Psi_{n},  {\omega}^{1})    }.
\end{array}
\end{equation*}
Thus by applying (\ref{eq:6-6}), we obtain
\begin{equation}\label{eq:6-14}
\big |\big\langle \frac{\partial\Psi_{n}}{\partial t}, \, {\omega} \big\rangle_{\mathcal{H}^{-1},\,\mathcal{H}^{1}_{0}} \big | \leq C \Vert  {\omega}^1 \Vert_{\mathcal{H}_{0}^{1}} \leq C,
\end{equation}
which implies that
\begin{equation}\label{eq:6-15}
\Vert \frac{\partial\Psi_{n}}{\partial t}\Vert_{\mathcal{H}^{-1}}\leq C.
\end{equation}
Similarly, we can prove 
\begin{equation}\label{eq:6-16}
\Vert \frac{\partial^{2}\mathbf{A}_{n}}{\partial t^{2}} \Vert_{(\mathbf{H}^{1}_{t,0})^{\prime}} \leq C.
\end{equation}
It remains to show  $\Vert \frac{\partial\phi_{n} }{\partial t} \Vert_{L^2} \leq C.$ In order to estimate $\Vert \frac{\partial\phi_{n} }{\partial t} \Vert_{L^2}$, we fix $f\in L^{2}(\Omega)$ and find $\psi_{n} \in Z_{n} =  {\rm span}\{u_k\}_{k=1}^{n}$ such that
\begin{equation}\label{eq:6-17}
(\nabla \psi_{n}, \,\nabla u) = (f, \,u)  \quad \forall u \in Z_{n}.
\end{equation}
Then  by differentiating both sides of the third equation of (\ref{eq:6-2}) with respect to $t$ and using (\ref{eq:6-17}), we have
\begin{equation}
\big(\frac{\partial\phi_{n} }{\partial t},\, f\big) = \big(\nabla \frac{\partial\phi_{n} }{\partial t},\, \nabla \psi_{n}\big) = \big(\frac{\partial }{\partial t}|\Psi_{n}|^{2}, \, \psi_{n}\big).
\end{equation}
Thus from (\ref{eq:6-15}) and (\ref{eq:6-12}), we deduce
\begin{equation}\label{eq:6-18}
\big| \big(\frac{\partial\phi_{n} }{\partial t},\, f\big) \big | \leq C \Vert \frac{\partial\Psi_{n}}{\partial t}\Vert_{\mathcal{H}^{-1}} \Vert \Psi_{n}\Vert_{\mathcal{H}^{1}}\Vert \psi_{n}\Vert_{H^{2}} \leq C \Vert \psi_{n}\Vert_{H^{2}} .
\end{equation}
Next we will prove  $\Vert \psi_{n}\Vert_{H^{2}} \leq C \Vert f\Vert_{L^{2}}$. Note that each basis function $u_j\in Z_{n}$ satisfies $-\Delta u_j = \lambda_j u_j$. By (\ref{eq:6-17}), we have
\begin{equation}
(-\Delta \psi_{n}, \,-\Delta u_j) = (-\Delta \psi_{n}, \,\lambda_j u_j) =\lambda_j (\nabla \psi_{n}, \, \nabla u_j)
= \lambda_j (f, \,u_j) = (f,\, -\Delta u_j) .
\end{equation}
It follows that
\begin{equation}
\Vert \Delta \psi_{n} \Vert^{2}_{{L}^{2}} = (-\Delta \psi_{n}, \,-\Delta \psi_{n}) = (f,\, -\Delta \psi_{n})
\leq \Vert f\Vert_{L^{2}} \Vert \Delta \psi_{n} \Vert_{{L}^{2}},
\end{equation}
which implies that
\begin{equation}
\Vert \Delta \psi_{n} \Vert_{{L}^{2}} \leq  \Vert f\Vert_{L^{2}}.
\end{equation}
It is easy to show $\Vert \nabla \psi_{n} \Vert_{{L}^{2}} \leq  C \Vert f\Vert_{L^{2}}$. Hence we get
\begin{equation}\label{eq:6-19}
\Vert \psi_{n}\Vert_{H^{2}} \leq C \Vert f\Vert_{L^{2}}.
\end{equation}
Combining (\ref{eq:6-18}) and (\ref{eq:6-19}), we find $ \Vert \frac{\partial\phi_{n} }{\partial t} \Vert_{L^2} \leq C.$ Thus we complete the proof of Theorem~\ref{thm6-0}.   \quad \end{proof}

Using the above energy estimates, we have
\begin{corollary}
Given $T>0$, the nonlinear differential system (\ref{eq:6-2}) has a unique global solution $(\Psi_n,\mathbf{A}_n,\phi_n)$, which satisfies (\ref{eq:6-6}) and (\ref{eq:6-7}). 
\end{corollary}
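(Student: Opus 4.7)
The plan is to combine standard ODE theory for local existence and uniqueness with the a priori bounds already established in Theorem~\ref{thm6-0} to rule out finite-time blow-up, thereby extending the local solution to the full interval $[0,T]$.

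First, I would reduce (\ref{eq:6-2}) to a first-order autonomous ODE system in a finite-dimensional coefficient vector. The third equation of (\ref{eq:6-2}) determines $\phi_{n}$ algebraically from $\Psi_n$: because $\{u_j\}_{j=1}^n$ is orthogonal in $H_0^1$ with eigenvalues $\lambda_j$, one has $f_{jn} = \lambda_j^{-1}(|\Psi_n|^2, u_j)$, which is a polynomial in $(h_{jn}, \overline{h_{jn}})$. Substituting this into the first equation and using the orthonormality of $\{\varphi_j\}$ in $\mathcal{L}^2$ and of $\{\mathbf{v}_j\}$ in $\mathbf{L}^2$ to invert the mass matrices, the remaining dynamical system takes the form $\dot y = F(y)$ with $y = (h_{jn}, g_{jn}, g_{jn}')_{j=1}^n$ and $F$ a polynomial (in particular $C^\infty$ and locally Lipschitz) vector field. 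The Picard--Lindel\"of theorem therefore yields a unique $C^1$ solution on a maximal forward interval $[0, T_n^\ast)$ with $T_n^\ast \in (0, T]$.

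Second, I would show $T_n^\ast = T$ by contradiction. Suppose $T_n^\ast < T$. Theorem~\ref{thm6-0} applies on every subinterval on which the local solution is defined, yielding bounds (\ref{eq:6-6})--(\ref{eq:6-7}) with a constant $C$ that is independent of the time of evaluation. Since $(\Psi_n(t), \mathbf{A}_n(t), \partial_t \mathbf{A}_n(t)) \in \mathcal{Z}_n \times \mathbf{Z}_n \times \mathbf{Z}_n$ live in a fixed finite-dimensional space, all norms on this space are equivalent (with an $n$-dependent but $t$-independent constant), and by the orthonormality of the bases the norm $\|\Psi_n\|_{\mathcal{L}^2}$ equals $\big(\sum_j |h_{jn}|^2\big)^{1/2}$ and similarly for $g_{jn}, g_{jn}'$. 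Hence the bounds of Theorem~\ref{thm6-0} translate into a uniform bound on $|y(t)|$ on $[0, T_n^\ast)$. The standard continuation theorem for ODEs then contradicts the maximality of $T_n^\ast$, so $T_n^\ast = T$. The global estimates (\ref{eq:6-6}) and (\ref{eq:6-7}) are then exactly those of Theorem~\ref{thm6-0}, now applied on the full interval $[0, T]$.

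The main obstacle here is essentially bookkeeping: all the heavy analytic work, namely the charge and energy conservation of Lemma~\ref{lem6-4} and the derived $\mathcal{H}^1$, $\mathbf{H}^1$, $H^1$, $\mathcal{H}^{-1}$, $\mathbf{L}^2$, $L^2$, and $(\mathbf{H}^1_{t,0})'$ estimates of Theorem~\ref{thm6-0}, has already been carried out under the standing hypothesis that the solution exists. The remaining task is merely to verify that these function-space bounds control the finite-dimensional coefficient vector uniformly up to $T_n^\ast$, which is immediate from the equivalence of norms on the finite-dimensional subspaces; no additional delicate estimate is required.
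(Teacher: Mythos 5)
Your proposal is correct and follows essentially the same route as the paper: the paper obtains a unique local solution from standard ODE theory (as noted just before Lemma~\ref{lem6-2}) and then invokes the a priori bounds of Theorem~\ref{thm6-0}, which hold whenever the solution exists, to extend it to all of $[0,T]$. Your additional bookkeeping (the explicit elimination of $\phi_n$ via $f_{jn}=\lambda_j^{-1}(|\Psi_n|^2,u_j)$ and the translation of the function-space bounds into a bound on the coefficient vector by finite-dimensional norm equivalence) correctly fills in the details the paper leaves implicit.
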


We now quote a compactness lemma which can be found in \cite{Simon}.
\begin{lemma}\label{lem6-5}
Let $B_0$, $B$, $B_1$ be three Banach spaces such that $B_0\subset B \subset B_1$ with continuous embedding and the embedding $B_0\subset B$ is compact. Suppose $F$ is a bounded set in $L^{\infty}(a,b; B_0)$ such that $\frac{\partial F}{\partial t} = \{\frac{\partial f}{\partial t} \,|\, f\in F\}$ is bounded in 
$L^{p}(a,b; B_1)$ for some $p>1$. Then $F$ is relatively compact in $C([0,T]; B)$.
\end{lemma}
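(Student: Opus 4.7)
The plan is to apply the Arzel\`a--Ascoli theorem in $C([0,T]; B)$, which requires two ingredients: (i) pointwise relative compactness of $\{f(t) : f \in F\}$ in $B$ for every $t$, and (ii) uniform equicontinuity of $F$ viewed as a family of maps from $[a,b]$ into $B$.

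Condition (i) is essentially free. The $L^{\infty}(a,b; B_{0})$-bound on $F$ supplies, for each fixed $t$, a uniform $B_{0}$-bound on the slice $\{f(t) : f \in F\}$, and the compact embedding $B_{0} \hookrightarrow B$ yields relative compactness of this slice in $B$. Condition (ii) is where the two structural hypotheses interact. Since $\partial_{t} f \in L^{p}(a,b; B_{1})$ with $p > 1$, each $f$ has an absolutely continuous $B_{1}$-valued representative satisfying
\[
\|f(t) - f(s)\|_{B_{1}} \;\leq\; |t-s|^{(p-1)/p}\, \|\partial_{t} f\|_{L^{p}(a,b; B_{1})},
\]
and the right-hand side is uniform over $F$ by assumption. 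I would then combine this H\"older-in-time estimate with the uniform $B_{0}$-bound using the Ehrling-type interpolation inequality already recorded as Lemma~\ref{lem6-3}: for every $\varepsilon > 0$ there exists $C_{\varepsilon}$ with $\|u\|_{B} \leq \varepsilon \|u\|_{B_{0}} + C_{\varepsilon} \|u\|_{B_{1}}$ for all $u \in B_{0}$. Applied to $u = f(t) - f(s)$, this produces
\[
\|f(t) - f(s)\|_{B} \;\leq\; 2\varepsilon M + C_{\varepsilon} C'\, |t-s|^{(p-1)/p},
\]
where $M$ and $C'$ are constants independent of $f$, $s$, $t$. Fixing $\varepsilon$ small, then shrinking $|t-s|$, yields the desired equicontinuity.

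The main obstacle, beyond the interpolation step itself, is the bookkeeping needed to pick consistent continuous-in-$t$ representatives so that the pointwise statement is meaningful; this is routine once one knows $\partial_{t} f \in L^{p}(a,b; B_{1})$ with $p > 1$, since this already forces $f \in C([a,b]; B_{1})$ by absolute continuity, and Lemma~\ref{lem6-3} then upgrades continuity from $B_{1}$ to $B$. Given Lemma~\ref{lem6-3} and the compact embedding $B_{0} \hookrightarrow B$, the remaining Arzel\`a--Ascoli argument is mechanical, and this is essentially the path taken in \cite{Simon}.
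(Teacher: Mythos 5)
The paper does not prove this lemma at all: it is quoted verbatim from \cite{Simon} (``We now quote a compactness lemma which can be found in \cite{Simon}''), so there is no in-paper argument to compare against. Your sketch is the standard proof of this Aubin--Lions--Simon type result and is correct in outline: Arzel\`a--Ascoli in $C([a,b];B)$, with pointwise relative compactness of the slices $\{f(t):f\in F\}$ coming from the uniform $B_{0}$-bound and the compact embedding $B_{0}\hookrightarrow B$, and uniform equicontinuity coming from the H\"older-in-time estimate $\|f(t)-f(s)\|_{B_{1}}\leq |t-s|^{(p-1)/p}\|\partial_{t}f\|_{L^{p}(a,b;B_{1})}$ (this is exactly where $p>1$ is used) interpolated against the $B_{0}$-bound via the Ehrling inequality of Lemma~\ref{lem6-3}. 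The one point you wave at as ``bookkeeping'' that deserves a sentence more care is that the $L^{\infty}(a,b;B_{0})$ bound controls $\|f(t)\|_{B_{0}}$ only for almost every $t$, so to apply Ehrling to $f(t)-f(s)$ for \emph{all} $t,s$ you should either first run the argument on the full-measure set where the bound holds and extend by uniform continuity, or invoke a weak (weak-$\ast$) compactness property of bounded sets in $B_{0}$ to conclude $f(t)\in B_{0}$ everywhere; with that addendum the proof is complete and matches the argument in \cite{Simon}.
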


From Lemma~\ref{lem6-5} and Theorem~\ref{thm6-0}, we deduce that there exists $\Psi \in C([0,T];\\ \mathcal{L}^{p}(\Omega))\cap L^{\infty}((0, T); \mathcal{H}_{0}^{1}(\Omega))$, $\mathbf{A} \in C([0,T]; \mathbf{L}^{p}(\Omega))\cap L^{\infty}((0, T); \mathbf{H}^{1}_{t,0}(\Omega))$, $\phi \in C([0,T]; {L}^{p}(\Omega))\cap L^{\infty}((0, T); H_{0}^{1}(\Omega))$ and a subsequence $\{(\Psi_{n_k},\mathbf{A}_{n_k}, \phi_{n_k}) \}$ such that as $n_k\rightarrow \infty $
\begin{equation}\label{eq:6-20}
\begin{array}{lll}
{\displaystyle    \Psi_{n_k} \rightarrow \Psi \;in \;C([0,T]; \mathcal{L}^{p}(\Omega)), \;\;  \Psi_{n_k}\rightharpoonup \Psi \;in\; L^{\infty}(0, T; \mathcal{H}_{0}^{1}(\Omega)) \; weak\text{-}star,             }\\[2mm]
{\displaystyle \mathbf{A}_{n_k}\rightarrow \mathbf{A} \; in\; C([0,T]; \mathbf{L}^{p}(\Omega)),\;\; \mathbf{A}_{n_k}\rightharpoonup \mathbf{A} \; in\; L^{\infty}(0,T; \mathbf{H}^{1}_{t,0}(\Omega)) \; weak\text{-}star,   } \\[2mm]
{\displaystyle    \phi_{n_k} \rightarrow  \phi \;in \;C([0,T]; {L}^{p}(\Omega)), \;\;  \phi_{n_k}\rightharpoonup \phi \;in\; L^{\infty}(0, T; {H}_{0}^{1}(\Omega)) \; weak\text{-}star.          }
\end{array}
\end{equation}
Here $1<  p < \infty$ for $ d=2$ and $ 1<  p < 6$ for $d =3$.

Furthermore, we have the following convergence properties for time derivatives of $\{(\Psi_{n_k},\mathbf{A}_{n_k}, \phi_{n_k}) \}$.
\begin{equation}\label{eq:6-21}
\begin{array}{lll}
{\displaystyle \frac{\partial \Psi_{n_k} }{\partial t} \rightharpoonup \frac{\partial \Psi }{\partial t} \;in\; L^{\infty}(0, T; \mathcal{H}^{-1}(\Omega)) \;\;weak\text{-}star, }\\[2mm]
{\displaystyle \frac{\partial \phi_{n_k} }{\partial t} \rightharpoonup \frac{\partial \phi }{\partial t} \;in\; L^{\infty}(0, T; {L}^{2}(\Omega)) \; \; weak\text{-}star,    }\\[2mm]
{\displaystyle  \frac{\partial \mathbf{A}_{n_k} }{\partial t} \rightharpoonup \frac{\partial \mathbf{A} }{\partial t} \;in\; L^{\infty}(0, T; \mathbf{L}^{2}(\Omega)) \;\;  weak\text{-}star,}\\[2mm]
{\displaystyle  \frac{\partial^{2} \mathbf{A}_{n_k} }{\partial t^{2}} \rightharpoonup \frac{\partial ^{2} \mathbf{A} }{\partial t^{2}} \;in\; L^{\infty}(0, T; (\mathbf{H}^{1}_{t,0}(\Omega))^{\prime}) \; \; weak\text{-}star}.\\[2mm]
\end{array}
\end{equation}

Passing to the limits  $n_k \rightarrow \infty $ in our Galerkin appriximations, we obtain
\begin{theorem}\label{thm6-1}
Given $T > 0$, there exists a weak solution $(\Psi,\mathbf{A}, \phi)$ to the M-S-C system (\ref{eq:1.8})-(\ref{eq:1.9}) in the sense of Definition~\ref{def:2.1}, which satisfies the conservation of the total energy:
\begin{equation}
\mathcal{E}(t) = \mathcal{E}(0).  
\end{equation}
where $\mathcal{E}(t)$ is given in (\ref{eq:1.10}).
\end{theorem}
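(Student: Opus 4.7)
\smallskip
\noindent\textbf{Proof proposal.} The plan is to pass to the limit $n_k\to\infty$ in the Galerkin system (\ref{eq:6-2}) along the subsequence furnished by (\ref{eq:6-20})--(\ref{eq:6-21}), recover the variational identities (\ref{eq:6-01})--(\ref{eq:6-03}) of Definition~\ref{def:2.1}, verify the regularity classes (\ref{subeq:6-000})--(\ref{subeq:6-003}) and the initial conditions, and finally establish the energy identity by combining the conservation already proved in Lemma~\ref{lem6-4} with lower semi-continuity.

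First, I would fix an index $j$ and a scalar-valued $\eta\in C_0^1(0,T)$ (respectively $\eta\in C_0^2(0,T)$ for the Maxwell part), multiply the $j$-th component of (\ref{eq:6-2}) by $\eta$, integrate over $(0,T)$, and integrate by parts in time to put all time derivatives on $\eta$. For every $n_k\ge j$ this yields an identity against the test functions $\eta\varphi_j$, $\eta\mathbf{v}_j$, $\eta u_j$. The linear terms pass to the limit immediately from the weak-star convergences in (\ref{eq:6-20})--(\ref{eq:6-21}). The main obstacle is the nonlinear terms, which I would dispatch by exploiting the strong convergences in $C([0,T];\mathcal{L}^p(\Omega))$ and $C([0,T];\mathbf{L}^p(\Omega))$ with $p<6$: in the cross term $(\mathbf{A}_{n_k}\Psi_{n_k},\mathbf{A}_{n_k}\varphi_j)$ both factors $\mathbf{A}_{n_k},\Psi_{n_k}$ converge strongly in $\mathcal{L}^4$, and similarly $\phi_{n_k}\Psi_{n_k}\to\phi\Psi$; the current terms of the form $\Psi_{n_k}^{*}\nabla\Psi_{n_k}$ combine the strong $\mathcal{L}^3$ convergence of $\Psi_{n_k}^{*}$ with the weak $\mathbf{L}^2$ convergence of $\nabla\Psi_{n_k}$, so they converge weakly to $\Psi^{*}\nabla\Psi$ against the smooth test vector; the cubic term $|\Psi_{n_k}|^2\mathbf{A}_{n_k}$ is handled identically since $|\Psi_{n_k}|^2\to|\Psi|^2$ in $C([0,T];L^2)$ by H\"{o}lder. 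After passage, a density argument using the fact that $\bigcup_n\mathcal{Z}_n$, $\bigcup_n\mathbf{Z}_n$, $\bigcup_n Z_n$ are dense in $\mathcal{H}_0^1(\Omega)$, $\mathbf{H}_{t,0}^1(\Omega)$, $H_0^1(\Omega)$ respectively (Lemmas~\ref{lem6-0}--\ref{lem6-1}) extends the identities to arbitrary test functions in the classes of Definition~\ref{def:2.1}, producing (\ref{eq:6-01})--(\ref{eq:6-03}).

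Next, the regularity statements (\ref{subeq:6-000})--(\ref{subeq:6-003}) are read off: the weak-star limits with the bounds from Theorem~\ref{thm6-0} give the $L^{\infty}$-in-time classes, while the continuity in $t$ with values in $\mathcal{L}^2$, $\mathbf{L}^2$, $L^2$ follows from the application of Lemma~\ref{lem6-5} already used to extract the strong convergences, interpolated with the weak-star bounds via Lions--Magenes-type arguments. The initial conditions $\Psi(\cdot,0)=\Psi_0$, $\mathbf{A}(\cdot,0)=\mathbf{A}_0$, $\mathbf{A}_t(\cdot,0)=\mathbf{A}_1$ come from the pointwise-in-time strong convergences evaluated at $t=0$ together with $\mathcal{P}_n\Psi_0\to\Psi_0$ and $\mathbf{P}_n\mathbf{A}_0\to\mathbf{A}_0$, $\mathbf{P}_n\mathbf{A}_1\to\mathbf{A}_1$ in the natural norms.

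Finally, for the energy identity $\mathcal{E}(t)=\mathcal{E}(0)$, I would start from $\mathcal{E}_{n_k}(t)=\mathcal{E}_{n_k}(0)$ (Lemma~\ref{lem6-4}). Convergence of the initial data in $\mathcal{H}_0^1$, $\mathbf{H}^1_{t,0}$, $\mathbf{L}^2_0$ gives $\mathcal{E}_{n_k}(0)\to\mathcal{E}(0)$; lower semi-continuity of each quadratic functional with respect to the weak-star convergences, combined with strong convergence of $\mathbf{A}_{n_k}\Psi_{n_k}$ in $\mathbf{L}^2$ (which lets the gauge-covariant gradient term pass), yields $\mathcal{E}(t)\le\liminf_{n_k}\mathcal{E}_{n_k}(t)=\mathcal{E}(0)$. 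The opposite inequality is the delicate point and the main obstacle of the whole argument: I would recover it by testing the limit equations (legitimate thanks to the regularity of the limit and a Galerkin-consistent mollification in time) with $(\partial_t\Psi,\partial_t\mathbf{A},\partial_t\phi)$ to derive $\tfrac{d}{dt}\mathcal{E}(t)=0$ in $\mathcal{D}'(0,T)$, mirroring the manipulations that produced (\ref{eq:6-5})--(\ref{eq:6-5-1}); together with the upper bound this forces the equality claimed in the theorem.
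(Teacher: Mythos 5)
Your overall strategy --- pass to the limit in the Galerkin system (\ref{eq:6-2}) using the strong convergences in $C([0,T];\mathcal{L}^p)$, $C([0,T];\mathbf{L}^p)$ for the nonlinear terms and the weak-star convergences (\ref{eq:6-20})--(\ref{eq:6-21}) for the linear ones, then conclude by density of $\bigcup_n\mathcal{Z}_n$, $\bigcup_n\mathbf{Z}_n$, $\bigcup_n Z_n$ --- is exactly the argument the paper has in mind; the paper explicitly omits these details as ``standard,'' and your treatment of the products $\mathbf{A}_{n_k}\Psi_{n_k}$, $\phi_{n_k}\Psi_{n_k}$, $\Psi_{n_k}^{*}\nabla\Psi_{n_k}$ and $|\Psi_{n_k}|^{2}\mathbf{A}_{n_k}$ by pairing strong $\mathcal{L}^p$ ($p<6$) convergence with weak $\mathbf{L}^2$ convergence of gradients is the correct way to fill them in. On this portion, and on the recovery of the regularity classes and initial data, your proposal matches the paper.

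The one place where you go beyond the paper is the energy identity, and there your proposed mechanism for the reverse inequality has a real obstruction. Testing the limit equations with $(\partial_t\Psi,\partial_t\mathbf{A},\partial_t\phi)$ is not legitimate at the regularity of Definition~\ref{def:2.1}: the Schr\"{o}dinger equation holds only in $\mathcal{H}^{-1}(\Omega)$ and $\partial_t\Psi$ itself lies only in $L^{\infty}(0,T;\mathcal{H}^{-1}(\Omega))$, so the pairing of the equation with $\partial_t\Psi$ is undefined (terms such as $(|\mathbf{A}|^{2}\Psi,\partial_t\Psi)$ would require $|\mathbf{A}|^{2}\Psi\in\mathcal{H}^{1}_{0}(\Omega)$, which is not available), and a mollification in time does not repair this because the nonlinearities do not commute with the mollifier without additional compactness in space. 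Lower semi-continuity together with $\mathcal{E}_{n_k}(0)\to\mathcal{E}(0)$ honestly yields only $\mathcal{E}(t)\le\mathcal{E}(0)$. To your credit, you correctly identify this as the delicate point; note, however, that the paper itself supplies no proof whatsoever of the equality $\mathcal{E}(t)=\mathcal{E}(0)$ for the limit, so your attempt is not weaker than the source --- it simply makes visible a gap that the paper leaves silent.
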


Here we omit the proof of the weak limit satisfies (\ref{eq:6-01})-(\ref{eq:6-03}) since the technique is standard.
\begin{remark}
By making a slight modification of the proof in this section, Theorem~\ref{thm6-1} holds for the M-S-C system with bounded coefficients:
\begin{equation}
\left\{
\begin{array}{@{}l@{}}
{\displaystyle  -\mathrm{i}\frac{\partial \Psi}{\partial t}+
\frac{1}{2}\big(\mathrm{i}\nabla +\mathbf{A}\big) \cdot \big( \frac{1}{m(\mathbf{x})}(\mathrm{i}\nabla +\mathbf{A})\Psi\big)
 + V\Psi + \phi\Psi= 0 ,\,\, (\mathbf{x},t)\in
\Omega\times(0,T),}\\[2mm]
{\displaystyle \frac{\partial ^{2}\mathbf{A}}{\partial t^{2}}+\nabla\times
(\mu^{-1}(\mathbf{x})\nabla\times \mathbf{A}) +\frac{\partial (\nabla \phi)}{\partial t}+\frac{\mathrm{i}}{2m(\mathbf{x})}\big(\Psi^{*}\nabla{\Psi}-\Psi\nabla{\Psi}^{*}\big) }\\[2mm]
{\displaystyle \qquad\qquad+\frac{1}{m(\mathbf{x})}\vert\Psi\vert^{2}\mathbf{A}=0,
\,\,\quad (\mathbf{x},t)\in \Omega\times(0,T),}\\[2mm]
{\displaystyle  \nabla \cdot \mathbf{A} =0, \quad -\Delta \phi = \vert\Psi\vert^{2},\,\, (\mathbf{x},t)\in \Omega\times(0,T) },
\end{array}
\right.
\end{equation}
where $0 < \alpha_{0} \leq m(\mathbf{x}) \leq \alpha_{1}$, $0 < \beta_{0} \leq \mu(\mathbf{x}) \leq \beta_{1}$.

In particular, Theorem~\ref{thm6-1} holds for the M-S-C system with rapidly oscillating discontinuous coefficients arising from the modeling of a heterogeneous structure with a periodic microstructure, i.e. $m(\mathbf{x}) = a(\frac{\mathbf{x}}{\varepsilon})$, $\mu(\mathbf{x}) = b(\frac{\mathbf{x}}{\varepsilon})$, where $a({\bm \xi})$, $b({\bm \xi})$ are 1-periodic in $ {\bm \xi}$ and $\varepsilon>0 $ is a small parameter. Furthermore, if $\mathcal{E}(0)$, the initial energy, is independent of $\varepsilon$, then $\mathcal{E}(t)$ is also independent of $\varepsilon$.
\end{remark}

\section{Fully discrete finite element scheme}\label{sec-3}

In this section, we consider the fully discretization of  the M-S-C system (\ref{eq:1.8})-(\ref{eq:1.9}) by the Galerkin finite element method in space together with the Crank-Nicolson scheme in time. In the following of the paper, we assume that $\Omega $ is  a bounded Lipschitz polyhedron convex domain in $\mathbb{R}^{3}$. 

Let us first triangulate the space domain $\Omega$ and assume that $\mathcal{T}_{h}=\{K\}$ is a regular partition of $\Omega$ into tetrahedrons  of maximal diameter $h$.  Without loss of generality, we assume that $0 < h < 1$. We denote by $P_{r}(K)$ the space of polynomials of degree $r$ defined on the element $K$. In the rest of this paper, we assume that $r \leq 2$ unless otherwise specified. For a given partition $\mathcal{T}_{h}$, we define the classical Lagrange finite element space
\begin{equation}
Y^{r}_{h} = \{u_{h} \in C(\Omega):\; u_{h}|_{K}  \in P_{r}(K), \; \forall \; K \in  \mathcal{T}_{h}\}.
\end{equation}

We have the following finite element subspaces of $H_{0}^{1}(\Omega)$,  $\mathcal{H}_{0}^{1}(\Omega)$, and $\mathbf{H}^{1}_{\rm t}(\Omega)$ 
\begin{equation}
X_{h}^{r} = Y^{r}_{h} \cap  H_{0}^{1}(\Omega), \quad \mathcal{X}_{h}^{r} = X_{h}^{r} \oplus {\rm i}X_{h}^{r},\quad \mathbf{X}_{h} = \big( Y^{2}_{h}\big)^{3}\cap \mathbf{H}^{1}_{\rm t}(\Omega).
\end{equation}

Let $\mathcal{I}_{h}$, $I_{h}$, and $\mathbf{I}_{h}$ be the commonly used Lagrange interpolation on $\mathcal{X}_{h}^{r}$, $X_{h}^{r}$, and $\mathbf{X}_{h}$, respectively. For $r=1,2$, $1\leq s \leq r+1$, we have the following interpolation error estimates \cite{Bre}:
\begin{subequations}
\begin{gather}
\Vert \psi - \mathcal{I}_{h}\psi\Vert_{\mathcal{L}^{2}} + h \Vert \psi - \mathcal{I}_{h}\psi\Vert_{\mathcal{H}^{1}} \leq C h^{s}\Vert\psi\Vert_{\mathcal{H}^{s}}, \; \forall \psi \in \mathcal{H}_{0}^{1}(\Omega)\cap\mathcal{H}^{r+1}(\Omega) , \label{subeq:2.00}  \\
  \Vert u - {I}_{h}u\Vert_{{L}^{2}}+h\Vert u - {I}_{h}u\Vert_{{H}^{1}} \leq C h^{s}\Vert u\Vert_{{H}^{s}}, \; \forall u \in {H}_{0}^{1}(\Omega)\cap {H}^{r+1}(\Omega) ,  \label{subeq:2.01} \\
 \Vert \mathbf{v} - \mathbf{I}_{h}\mathbf{v}\Vert_{\mathbf{L}^{2}} +   h \Vert \mathbf{v} - \mathbf{I}_{h}\mathbf{v}\Vert_{\mathbf{H}^{1}} \leq C h^{s}\Vert\mathbf{v}\Vert_{{\mathbf{H}}^{s}}, \; \forall \mathbf{v} \in \mathbf{H}_{\rm t}^{1}(\Omega)\cap\mathbf{H}^{r+1}(\Omega) .  \label{subeq:2.02}
\end{gather}
\end{subequations}

We approximate the scalar potential $\phi$ and the wave function $\Psi$ in $X_{h}^{r}$ and $ \mathcal{X}_{h}^{r} $ respectively, and find the approximate solution of the vector potential $\mathbf{A}$ in a subspace of $\mathbf{X}_{h}$:
\begin{equation}
\mathbf{X}_{0h} = \{ \mathbf{v}_{h} \in \mathbf{X}_{h} \;|\; (\nabla \cdot \mathbf{v}_{h}, \, q_{h} ) = 0, \; \forall q_{h} \in X_{h}^{1} \}.
\end{equation}
It is important to note that $\mathbf{X}_{0h} \nsubseteq \mathbf{H}^{1}_{t,0}(\Omega)$ since for each $\mathbf{v}_{h} \in \mathbf{X}_{0h}$, we only have $\rho_{h}(\nabla \cdot \mathbf{v}_{h}) = 0$, where $\rho_{h}$ is the orthogonal projection of $L^{2}(\Omega)$ onto $X_{h}^{1}.$

We now claim that there exists an interpolation operator ${\bm i}_{h}: \mathbf{H}^{1}_{t,0}(\Omega)\rightarrow \mathbf{X}_{0h}$, such that for every $\mathbf{v} \in \mathbf{H}^{1}_{t,0}(\Omega)\cap \mathbf{H}^{r+1}(\Omega)$,
\begin{equation}\label{eq:2-0}
\Vert \mathbf{v} - {\bm i}_{h}\mathbf{v}\Vert_{\mathbf{H}^{1}} \leq C h^{r}\Vert\mathbf{v}\Vert_{\mathbf{H}^{r+1}}.
\end{equation}
By the mixed finite element theory \cite{Brezzi,Gir}, we can ensure (\ref{eq:2-0}) by applying (\ref{subeq:2.02}) and the following discrete inf-sup condition: there exists a positive constant $\beta$, independent of $h$, such that 
\begin{equation}\label{eq:2-1}
\sup_{ \mathbf{v}_{h} \in \mathbf{X}_{h}} \frac{\big(\nabla \cdot \mathbf{v}_{h},\, q_h\big)}{ \Vert \mathbf{v}_{h} \Vert_{\mathbf{H}^{1}}} \geq \beta \,\Vert q_{h} \Vert_{L^{2}}, \quad  \forall \, q_{h} \in X_{h}^{1} .
\end{equation}
For $\widetilde{\mathbf{X}}_{h} = \big( Y^{2}_{h}\big)^{3}\cap \mathbf{H}^{1}_{0}(\Omega)$, $\widetilde{X}_{h} = Y^{1}_{h}$,  the following discrete inf-sup condition for Hood-Taylor element is proved in \cite{Brezzi} by Verf\"{u}rth's trick:
\begin{equation}\label{eq:2-1-0}
\sup_{ \mathbf{v}_{h} \in \widetilde{\mathbf{X}}_{h}} \frac{\big(\nabla \cdot \mathbf{v}_{h},\, q_h\big)}{ \Vert \mathbf{v}_{h} \Vert_{\mathbf{H}^{1}}} \geq \beta \,\Vert q_{h} \Vert_{L^{2}/{R}}, \quad  \forall \, q_{h} \in \widetilde{X}_{h} .
\end{equation}

The technique used in the proof of (\ref{eq:2-1-0}) can be applied directly to prove (\ref{eq:2-1}) by virtue of the fact that $\widetilde{\mathbf{X}}_{h} \subset \mathbf{X}_{h} $, $X_{h}^{1} \subset \widetilde{X}_{h} $  and the following continuous inf-sup condition:
\begin{equation}\label{eq:2-1-2}
\sup_{ \mathbf{v} \in \mathbf{H}_{\rm t}^{1}(\Omega) } \frac{\big(\nabla \cdot \mathbf{v},\, q\big)}{ \Vert \mathbf{v} \Vert_{\mathbf{H}^{1}}} \geq \beta \,\Vert q \Vert_{L^{2}}, \quad  \forall \, q \in L^{2}(\Omega).
\end{equation}
For more details, see \cite{Brezzi}. Thus (\ref{eq:2-0}) is verified.

Let ${\bm \pi}_{h}: \mathbf{H}_{\rm t}^{1}(\Omega) \rightarrow \mathbf{X}_{0h}$ be a Ritz projection as follows: $\forall\, \mathbf{A}\in\mathbf{H}_{\rm t}^{1}(\Omega) $, find ${\bm \pi}_{h}\mathbf{A} \in \mathbf{X}_{0h}$ such that 
\begin{equation}
\big(\nabla\cdot(\mathbf{A} - {\bm \pi}_{h}\mathbf{A} ), \, \nabla\cdot \mathbf{v}\big) + \big(\nabla\times(\mathbf{A} - {\bm \pi}_{h}\mathbf{A} ), \, \nabla\times \mathbf{v}\big) = 0,\quad \forall\,\mathbf{v}\in \mathbf{X}_{0h}.
\end{equation}
Owing to (\ref{eq:2-0}), we have the following error estimate of ${\bm \pi}_{h}$.
\begin{equation}\label{eq:2-2}
\Vert \mathbf{v} - {\bm \pi}_{h}\mathbf{v}\Vert_{\mathbf{H}^{1}} \leq C h^{r}\Vert\mathbf{v}\Vert_{\mathbf{H}^{r+1}}, \quad \forall \mathbf{v} \in \mathbf{H}^{1}_{t,0}(\Omega)\cap \mathbf{H}^{r+1}(\Omega).
\end{equation}

To define our fully discrete scheme, we divide the time interval $(0,T)$ into $M$ uniform subintervals using the nodal points
\begin{equation*}
0=t^{0} < t^{1} <\cdots<t^{M}=T,
\end{equation*}
with $t^{k} = k\tau$ and $\tau = T/M$. We denote $u^{k}=u(\cdot,t^{k})$ for any given functions $u\in C\big((0,T);\,W\big)$ with a Banach space $ W $. For a given sequence $\{u^{k}\}_{k=0}^{M}$, we introduce the following notation:
\begin{equation}\label{eq:2-3}
\begin{array}{@{}l@{}}
{\displaystyle \partial_{\tau} u^{k}= (u^{k}-u^{k-1})/\tau, \quad\partial_{\tau}^{2} u^{k}=(\partial_{\tau} u^{k}-\partial_{\tau} u^{k-1})/\tau,}\\[2mm]
{\displaystyle \overline{u}^{k} = (u^{k}+u^{k-1})/2, \quad \widetilde{u}^{k}=(u^{k}+u^{k-2})/2,}\\[2mm]
\end{array}
\end{equation}
 
For convenience, Let us assume that $ \mathbf{A}^{-1} $ is defined by
\begin{equation}\label{eq:2-4}
{\displaystyle \mathbf{A}^{-1}=\mathbf{A}(\cdot,0)-\tau \frac{\partial \mathbf{A}}{\partial t}(\cdot,0)=\mathbf{A}_{0}-\tau\mathbf{A}_{1}},
\end{equation}
which is an approximation of $\mathbf{A}(\cdot,-\tau)$ with second order accuracy. 

Using the above notation, we can formulate our first fully discrete finite element scheme for the M-S-C system as follows.

Scheme ({\rm \uppercase\expandafter{\romannumeral1}}).   $\quad$For $k=0 ,1,\cdots, M$, find $(\Psi_{h}^{k},\mathbf{A}_{h}^{k},\phi^{k}_{h})\in\mathcal{X}_{h}^{r}\times\mathbf{X}_{0h}\times X_{h}^{r}$ such that
\begin{equation}\label{eq:2-5}
\Psi_{h}^{0}=\mathcal{I}_{h}\Psi_{0},\quad\mathbf{A}_{h}^{0}={\bm \pi}_h \mathbf{A}_{0},\quad
 \mathbf{A}_{h}^{-1}=\mathbf{A}_{h}^{0}-\tau\, {\bm \pi}_h\mathbf{A}_{1},
\end{equation}
and for any $\varphi\in\mathcal{X}_{h}^{r}$, $\mathbf{v} \in \mathbf{X}_{0h}$, $u \in X_{h}^{r}$, the following equations hold:
\begin{equation}\label{eq:2-6}
\left\{
\begin{array}{@{}l@{}}
{\displaystyle
 -\mathrm{i}({\partial_{\tau}}\Psi_{h}^{k},\varphi)+
\frac{1}{2}\left((\mathrm{i}\nabla +\overline{\mathbf{A}}^{k}_{h})\overline{\Psi}_{h}^{k},(\mathrm{i}\nabla +\overline{\mathbf{A}}_{h}^{k})\varphi\right) +\left( (V+\overline{\phi}_{h}^{k})\overline{\Psi}_{h}^{k},\varphi\right) = 0, }\\[2mm]
{\displaystyle (\partial_{\tau}^{2}\mathbf{A}_{h}^{k},\mathbf{v})  + \big(\nabla\times
 \widetilde{\mathbf{A}}_{h}^{k},\nabla\times\mathbf{v}\big) +\big(\nabla \cdot
 \widetilde{\mathbf{A}}_{h}^{k},\nabla\cdot\mathbf{v}\big)+\big(|\Psi_{h}^{k-1}|^{2}\frac{\overline{\mathbf{A}}_{h}^{k} + \overline{\mathbf{A}}_{h}^{k-1}}{2},\, \mathbf{v}\big)}\\[2mm]
{\displaystyle\quad +\Big(\frac{\mathrm{i}}{2}\big((\Psi_{h}^{k-1})^{\ast}\nabla{\Psi_{h}^{k-1}}
-\Psi_h^{k-1}\nabla{(\Psi_{h}^{k-1})}^{\ast}\big),\mathbf{v}\Big) =0,}\\[2mm]
 {\displaystyle (\nabla {\phi}_{h}^{k}, \,\nabla u) = (\vert\Psi_{h}^{k}\vert^{2},\,u)}.
\end{array}
\right.
\end{equation}

\begin{remark}
In Section~\ref{sec-2} we prove that weak solutions in Definition~\ref{def:2.1} and Definition~\ref{def:2.2}
are equivalent. However, from the standpoint of finite element numerical computation, we choose to approximate weak solutions of the M-S-C system in the sense of Definition~\ref{def:2.2} instead of Definition~\ref{def:2.1} since it is very difficult to construct finite element subspaces of $\mathbf{H}^{1}_{t,0}(\Omega)$. We know that weak solutions of type \uppercase\expandafter{\romannumeral2} in Definition~\ref{def:2.2} imply that $\mathbf{A}$ is divergence-free although we only require $\mathbf{A}$ in $\mathbf{H}^{1}_{\rm t}(\Omega)$.  In the discrete level, if we approximate $\mathbf{A}$ in $\mathbf{X}_{h}$, it is difficult to degisn time integration schemes to ensure a discrete analogue of $\nabla\cdot \mathbf{A} = 0$, i.e. $P_{h}(\nabla\cdot\mathbf{A}_{h}) = 0$, where $P_{h}$ denotes the orthogonal projection of $L^{2}(\Omega)$ onto some finite element space. Thus we approximate $\mathbf{A}$ in $\mathbf{X}_{0h}$ to enforce the projection of $\nabla \cdot \mathbf{A}_{h}$ onto $X_{h}^{1}$ vanishes. Moreover, for the purpose of theoretical analysis, we add an extra term $\big(\nabla\cdot\widetilde{\mathbf{A}}_{h}^{k}, \,\nabla \cdot \mathbf{v}\big)$ to the discrete system (\ref{eq:2-6}).
It turns out that this term is indispensable to the proof of the error estimates and the existence of solutions to the discrete system (\ref{eq:2-6}).
\end{remark}

Apart from introducing the subspace $\mathbf{X}_{0h}$ of $\mathbf{X}_{h}$, we can also introduce a Lagrangian multiplier $p_{h}^{k}$ to relax the divergence-free constraint of $\mathbf{A}_{h}^{k}$ at each time step. We now give another fully discrete scheme based on the mixed finite element method as follows.
\begin{equation}
 {\rm Scheme } \;{\rm (\uppercase\expandafter{\romannumeral2}) }. \qquad {\rm Let} \quad  \Psi_{h}^{0}=\mathcal{I}_{h}\Psi_{0},\quad\mathbf{A}_{h}^{0}={\bm \pi}_h \mathbf{A}_{0},\quad
\mathbf{A}_{h}^{-1}=\mathbf{A}_{h}^{0}-\tau\, {\bm \pi}_h\mathbf{A}_{1},
\end{equation} 
and for $k=1 ,2 ,\cdots, M$, find $(\Psi_{h}^{k},\,\mathbf{A}_{h}^{k},\, p_{h}^{k}, \,\phi^{k}_{h})\in\mathcal{X}_{h}^{r}\times\mathbf{X}_{h}\times X_{h}^{1} \times X_{h}^{r}$ satisfying
\begin{equation}\label{eq:2-6-0}
\left\{
\begin{array}{@{}l@{}}
{\displaystyle
 -\mathrm{i}({\partial_{\tau}}\Psi_{h}^{k},\varphi)+
\frac{1}{2}\left((\mathrm{i}\nabla +\overline{\mathbf{A}}^{k}_{h})\overline{\Psi}_{h}^{k},(\mathrm{i}\nabla +\overline{\mathbf{A}}_{h}^{k})\varphi\right) +\left( (V+\overline{\phi}_{h}^{k})\overline{\Psi}_{h}^{k},\varphi\right) = 0, }\\[2mm]
{\displaystyle \qquad \qquad\qquad \qquad\qquad \qquad\qquad \qquad\qquad \quad  \qquad \quad\qquad    \quad \forall \varphi\in\mathcal{X}_{h}^{r},}\\[2mm]
{\displaystyle (\partial_{\tau}^{2}\mathbf{A}_{h}^{k},\mathbf{v})  + \big(\nabla\times
 \widetilde{\mathbf{A}}_{h}^{k},\nabla\times\mathbf{v}\big) +\big(\nabla \cdot
 \widetilde{\mathbf{A}}_{h}^{k},\nabla\cdot\mathbf{v}\big)+\big(|\Psi_{h}^{k-1}|^{2}\frac{\overline{\mathbf{A}}_{h}^{k} + \overline{\mathbf{A}}_{h}^{k-1}}{2},\, \mathbf{v}\big)}\\[2mm]
{\displaystyle\quad +\big(p_{h}^{k}, \, \nabla \cdot \mathbf{v}\big) +\Big(\frac{\mathrm{i}}{2}\big((\Psi_{h}^{k-1})^{\ast}\nabla{\Psi_{h}^{k-1}}
-\Psi_h^{k-1}\nabla{(\Psi_{h}^{k-1})}^{\ast}\big),\,\mathbf{v}\Big) =0, \quad \forall\mathbf{v}\in\mathbf{X}_{h},}\\[2mm]
{\displaystyle \big(\nabla \cdot \mathbf{A}_{h}^{k},\, q\big) = 0, \quad \forall\, q \in X_{h}^{1},    }\\[2mm]
 {\displaystyle (\nabla {\phi}_{h}^{k}, \,\nabla u) = (\vert\Psi_{h}^{k}\vert^{2},\,u),\quad\forall \;u \in X_{h}^{r}}.
\end{array}
\right.
\end{equation}

At each time step, the equation 
\begin{equation}\label{eq:2-6-1}
\begin{array}{@{}l@{}}
{\displaystyle (\partial_{\tau}^{2}\mathbf{A}_{h}^{k},\mathbf{v})  + \big(\nabla\times
 \widetilde{\mathbf{A}}_{h}^{k},\nabla\times\mathbf{v}\big) +\big(\nabla \cdot
 \widetilde{\mathbf{A}}_{h}^{k},\nabla\cdot\mathbf{v}\big)+\big(|\Psi_{h}^{k-1}|^{2}\frac{\overline{\mathbf{A}}_{h}^{k} + \overline{\mathbf{A}}_{h}^{k-1}}{2},\, \mathbf{v}\big)}\\[2mm]
{\displaystyle\quad +\Big(\frac{\mathrm{i}}{2}\big((\Psi_{h}^{k-1})^{\ast}\nabla{\Psi_{h}^{k-1}}
-\Psi_h^{k-1}\nabla{(\Psi_{h}^{k-1})}^{\ast}\big),\mathbf{v}\Big) =0, \quad \forall\mathbf{v}\in\mathbf{X}_{0h}}
\end{array}
\end{equation}
in scheme (\uppercase\expandafter{\romannumeral1}) and 
\begin{equation}\label{eq:2-6-2}
\left\{
\begin{array}{@{}l@{}}
{\displaystyle (\partial_{\tau}^{2}\mathbf{A}_{h}^{k},\mathbf{v})  + \big(\nabla\times
 \widetilde{\mathbf{A}}_{h}^{k},\nabla\times\mathbf{v}\big) +\big(\nabla \cdot
 \widetilde{\mathbf{A}}_{h}^{k},\nabla\cdot\mathbf{v}\big)+\big(|\Psi_{h}^{k-1}|^{2}\frac{\overline{\mathbf{A}}_{h}^{k} + \overline{\mathbf{A}}_{h}^{k-1}}{2},\, \mathbf{v}\big)}\\[2mm]
{\displaystyle\quad +\big(p_{h}^{k}, \, \nabla \cdot \mathbf{v}\big) +\Big(\frac{\mathrm{i}}{2}\big((\Psi_{h}^{k-1})^{\ast}\nabla{\Psi_{h}^{k-1}}
-\Psi_h^{k-1}\nabla{(\Psi_{h}^{k-1})}^{\ast}\big),\,\mathbf{v}\Big) =0, \quad \forall\mathbf{v}\in\mathbf{X}_{h},}\\[2mm]
{\displaystyle \big(\nabla \cdot \mathbf{A}_{h}^{k},\, q\big) = 0, \quad \forall\, q \in X_{h}^{1}  }
\end{array}
\right.
\end{equation}
in scheme (\uppercase\expandafter{\romannumeral2}) are decoupled from the other two equations, respectively. Due to  the discrete inf-sup condition (\ref{eq:2-1}) and the coercivity of the bilinear functional $a^{k}_{h}(\cdot,\, \cdot)$ in $\mathbf{X}_{h}$, where
\begin{equation}
\begin{array}{@{}l@{}}
{\displaystyle  a^{k}_{h}(\mathbf{u}, \,\mathbf{v}) = \big(\nabla\times
{\mathbf{u}},\nabla\times\mathbf{v}\big) +\big(\nabla \cdot
 {\mathbf{u}},\nabla\cdot\mathbf{v}\big)+ \frac{2}{\tau^{2}}(\mathbf{u},\mathbf{v})  } \\[2mm]
{\displaystyle   \qquad \qquad  + \frac{1}{2}\big(|\Psi_{h}^{k-1}|^{2}{\mathbf{u}},\, \mathbf{v}\big), \qquad \forall \, \mathbf{u}, \mathbf{v} \in \mathbf{X}_{h}, }
\end{array}
\end{equation} 
we know that there exists a unique solution to (\ref{eq:2-6-1}) and (\ref{eq:2-6-2}), respectively. It is easy to see that $\mathbf{A}_{h}^{k}$ in (\ref{eq:2-6-2}) satisfies (\ref{eq:2-6-1}) and thus the two above equations admit the same solution $\mathbf{A}_{h}^{k}$. Consequently, scheme (\uppercase\expandafter{\romannumeral1}) and scheme (\uppercase\expandafter{\romannumeral2}) are mathematically equivalent. However, scheme (\uppercase\expandafter{\romannumeral1}) is easier to perform theoretical analysis while scheme (\uppercase\expandafter{\romannumeral2}) is easier to carry out numerical computation. 

At each time step,  we first solve (\ref{eq:2-6-1}) or (\ref{eq:2-6-2}) and obtain $\mathbf{A}_{h}^{k}$. Then we substitute it into (\ref{eq:2-6-0}) and solve the nonlinear subsystem concerning $\Psi_{h}^{k}$ and $\phi_{h}^{k}$. The existence and uniqueness of solutions to this subsystem is proved in Section~\ref{sec-5}. In practical computations, we can apply the Picard simple iterative method or the Newton iterative method to solve the nonlinear subsystem.

For convenience, we define the following bilinear forms:
\begin{equation}\label{eq:2-7}
\begin{array}{@{}l@{}}
{\displaystyle B(\mathbf{A};\Psi,\varphi)=\left((\mathrm{i}\nabla+\mathbf{A})\Psi,(\mathrm{i}\nabla+\mathbf{A})\varphi\right),}\\[2mm]
{\displaystyle D(\mathbf{u},\,\mathbf{v})=(\nabla\cdot \mathbf{u},\,\nabla\cdot \mathbf{v})+
(\nabla\times\mathbf{u},\,\nabla\times\mathbf{v}),}\\[2mm]
{\displaystyle f(\Psi,\varphi)=\frac{\mathrm{i}}{2}(\varphi^{\ast}\nabla\Psi-\Psi\nabla\varphi^{\ast}).}
\end{array}
\end{equation}

Then (\ref{eq:2-6}) in scheme (\uppercase\expandafter{\romannumeral1})  can be rewritten as follows:
for $ k=1,2, \cdots,M$,
\begin{equation}\label{eq:2-8}
\left\{
\begin{array}{@{}l@{}}
{\displaystyle  -\mathrm{i}({\partial_{\tau}}\Psi_{h}^{k},\varphi)+
\frac{1}{2}B(\overline{\mathbf{A}}_{h}^{k};\,\overline{\Psi}_{h}^{k},\,\varphi)
 +( V\overline{\Psi}_{h}^{k},\varphi)+(\overline{\phi}_{h}^{k}\overline{\Psi}_{h}^{k},\varphi)  = 0 ,\quad \forall \varphi\in\mathcal{X}_{h}^{r} , } \\[2mm]
 {\displaystyle (\partial_{\tau} ^{2}\mathbf{A}_{h}^{k},\mathbf{v})+D(\widetilde{\mathbf{A}}_{h}^{k},\mathbf{v}) +\left( f(\Psi_{h}^{k-1},\Psi_h^{k-1}),\mathbf{v}\right) + \big(|\Psi_{h}^{k-1}|^{2}\frac{\overline{\mathbf{A}}_{h}^{k} + \overline{\mathbf{A}}_{h}^{k-1}}{2},\mathbf{v}\big)=0,}  \\[2mm]
 {\displaystyle \qquad \qquad\qquad \qquad\qquad \qquad\qquad \qquad\qquad \quad  \qquad \quad\qquad \quad   \forall\mathbf{v}\in\mathbf{X}_{0h} , } \\[2mm] 
{\displaystyle  (\nabla {\phi}_{h}^{k},\,\nabla u) = (\vert\Psi_{h}^{k}\vert^{2},\,u),\quad\forall u \in X_{h}^{r}.  } 
\end{array}
\right.
\end{equation}

In this paper we assume that the M-S-C system (\ref{eq:1.8})-(\ref{eq:1.9}) has one and only one weak solution $(\Psi,\mathbf{A},\phi)$ in the sense of Definition~\ref{def:2.2} and
the following regularity conditions are satisfied:
\begin{equation}\label{eq:2-9}
\begin{array}{@{}l@{}}
{\displaystyle
\Psi,\Psi_{t} \in {L}^{\infty}(0, T; \mathcal{H}^{r+1}(\Omega)),\quad \Psi_{tt} , \Psi_{ttt} \in {L}^{\infty}(0, T; \mathcal{H}^{1}(\Omega)),}\\[2mm]
{\displaystyle \qquad   \Psi_{tttt} \in L^{2}(0, T; \mathcal{L}^{2}(\Omega)), }\\[2mm]
{\displaystyle  \mathbf{A},\mathbf{A}_{t} \in {L}^{\infty}(0, T; \mathbf{H}^{r+1}(\Omega)) ,\quad \mathbf{A}_{tt} \in {L}^{\infty}(0, T; \mathbf{H}^{1}(\Omega))  }\\[2mm]
{\displaystyle\mathbf{A}_{ttt} \in {L}^{2}(0, T; \mathbf{H}^{1}(\Omega)),\;\; \mathbf{A}_{tttt} \in L^{2}(0, T; \mathbf{L}^{2}(\Omega)),}\\[2mm]
{\displaystyle \phi,\phi_{t}\in {L}^{\infty}(0, T; {H}^{r+1}(\Omega)), \;\;\phi_{tt} \in {L}^{\infty}(0, T; {H}^{1}(\Omega)),}\\[2mm]
{\displaystyle \phi_{ttt}\in {L}^{\infty}(0, T; {L}^{2}(\Omega)) , \quad \phi_{tttt}\in {L}^{2}(0, T; {L}^{2}(\Omega)).  }
\end{array}
\end{equation}
For the initial conditions $(\Psi_{0},\mathbf{A}_{0},\mathbf{A}_{1})$ , we assume that
\begin{equation}\label{eq:2-10}
\begin{array}{@{}l@{}}
{\displaystyle \Psi_{0}\in \mathcal{H}^{r+1}(\Omega) \cap \mathcal{H}_{0}^{1}(\Omega), \,\,\, \mathbf{A}_{0},\mathbf{A}_{1}\in\mathbf{H}^{r+1}(\Omega)\cap\mathbf{H}_{t,0}^{1}(\Omega).}
\end{array}
\end{equation}

We now give the main convergence result in this paper as follows:
\begin{theorem}\label{thm2-1}
Suppose that $ \Omega\subset \mathbb{R}^3$ is a bounded Lipschitz polyhedral convex domain.
Let $(\Psi,\mathbf{A},\phi)$ be the unique solution to the M-S-C system (\ref{eq:1.8})-(\ref{eq:1.9}) , and let
$ (\Psi_h^k,\mathbf{A}_h^k,\phi_{h}^{k})$ be the numerical solution to the discrete system (\ref{eq:2-5})-(\ref{eq:2-6}). Under the assumptions
(\ref{eq:2-9}) and (\ref{eq:2-10}), we have the following error estimates
\begin{equation}\label{eq:2-11}
\begin{array}{@{}l@{}}
 {\displaystyle \max_{1\leq k \leq M}\Big[\|\Psi_{h}^{k}-\Psi^{k}\|_{\mathcal{H}^1(\Omega)}^{2}
 + \|\mathbf{A}_{h}^{k}-\mathbf{A}^{k}\|_{\mathbf{H}^1(\Omega)}^{2} + \|\phi_{h}^{k}-\phi^{k}\|^{2}_{{H}^{1}(\Omega)} \Big]\leq C (h^{2r}+{\tau}^{4}),}
\end{array}
\end{equation}
where $ \Psi^k=\Psi(\cdot, t^k) $, $ \mathbf{A}^k=\mathbf{A}(\cdot, t^k) $, $ \phi^k=\phi(\cdot, t^k) $, $r\leq 2$, and $C$ is a constant independent of $h$ and $\tau$.
\end{theorem}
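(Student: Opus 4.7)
The plan is to follow an error-splitting strategy that reduces the analysis of the total error to that of a discrete error between the numerical solution and a suitable projection of the continuous one. Concretely, I would write $\Psi^k-\Psi_h^k=\eta_\Psi^k+\xi_\Psi^k$ with $\eta_\Psi^k:=\Psi^k-\mathcal{I}_h\Psi^k$ and $\xi_\Psi^k:=\mathcal{I}_h\Psi^k-\Psi_h^k\in\mathcal{X}_h^r$, and analogously use the projection ${\bm \pi}_h$ introduced in (\ref{eq:2-2}) for $\mathbf{A}$ and the standard elliptic Ritz projection associated with $-\Delta$ for $\phi$. The projection parts $\eta$ are controlled directly by (\ref{subeq:2.00})--(\ref{subeq:2.02}) and (\ref{eq:2-2}) under the regularity (\ref{eq:2-9}), contributing $O(h^r)$ in the $H^1$ norm, so the task reduces to bounding $\xi_\Psi^k$, $\xi_A^k$, $\xi_\phi^k$ in $H^1$.

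Next I would derive error equations by subtracting the continuous system, evaluated at $t^{k-1/2}$ in the Schr\"odinger equation and at $t^{k-1}$ in the Maxwell wave equation (so that $\widetilde{\mathbf{A}}^k$ serves as a symmetric second-order average around $t^{k-1}$), from the discrete scheme (\ref{eq:2-8}). The right-hand sides split into projection residuals of size $O(h^r)$ and Crank--Nicolson consistency terms of size $O(\tau^2)$; the regularity through $\Psi_{tttt}$ and $\mathbf{A}_{tttt}$ in (\ref{eq:2-9}) is exactly what is required to make these truncation bounds rigorous in $L^2$. The discrete initial errors $\xi_\Psi^0$, $\xi_A^0$, and $\xi_A^{-1}$ vanish by the choice of initial data (\ref{eq:2-5}).

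The crux is a combined energy identity that mirrors the discrete conservation argument of Section~\ref{sec-4}. I would test the Schr\"odinger error equation with $\partial_\tau\xi_\Psi^k$ (real part) and with $\overline\xi_\Psi^k$ (imaginary part), and simultaneously test the Maxwell error equation with $\partial_\tau\widetilde\xi_A^k=(\xi_A^k-\xi_A^{k-2})/(2\tau)$. Summing the three identities, the trilinear cross terms originating in the discrete current density $f(\Psi_h^{k-1},\Psi_h^{k-1})$ and the magnetic nonlinearity $|\Psi_h^{k-1}|^2\overline{\mathbf{A}}_h^k$ in the Maxwell equation will cancel against the corresponding contributions of $B(\overline{\mathbf{A}}_h^k;\overline\Psi_h^k,\cdot)$ in the Schr\"odinger equation, just as the current drives exact energy conservation at the continuous level. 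This structural cancellation is the feature the introduction alludes to: it removes any need to bound $\mathbf{A}_h^k\Psi_h^k$ or $|\Psi_h^k|^2\mathbf{A}_h^k$ in $L^\infty$, which would otherwise demand an inverse inequality and a CFL-type time step condition. The surviving trilinear residuals are absorbed through the uniform $H^1$ stability of $(\Psi_h^k,\mathbf{A}_h^k,\phi_h^k)$ established in Section~\ref{sec-4} combined with the three-dimensional Sobolev embedding $H^1\hookrightarrow L^6$, which yields bounds such as $\|\mathbf{A}_h^k\xi_\Psi^k\|_{\mathbf{L}^2}\le\|\mathbf{A}_h^k\|_{\mathbf{L}^6}\|\xi_\Psi^k\|_{\mathcal{L}^3}\le C\|\xi_\Psi^k\|_{\mathcal{H}^1}$, with $C$ independent of $h$ and $\tau$.

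The Poisson error equation for $\xi_\phi^k$ is closed separately by testing with $\xi_\phi^k$ and using H\"older's inequality on $|\Psi_h^k|^2-|\Psi^k|^2$ together with the $\mathcal{H}^1$ control on $\Psi_h^k$ and $\xi_\Psi^k$, yielding an $H^1$ bound of the same order. A discrete Gronwall argument applied to the summed estimate then produces the claimed bound $O(h^{2r}+\tau^4)$ uniformly in $h$ and $\tau$. The main obstacle I anticipate is making the cross-term cancellation rigorous in the fully discrete setting: the Schr\"odinger midpoint average $\overline{\mathbf{A}}_h^k$ and the Maxwell staggered average $\widetilde{\mathbf{A}}_h^k$ live on shifted time grids, so the cancellation is not algebraically exact and leaves a second-order-in-$\tau$ remainder whose careful bookkeeping, via Taylor expansion around $t^{k-1}$, is needed to ensure it can be absorbed into the Gronwall loop without a time step restriction.
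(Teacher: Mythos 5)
Your proposal follows essentially the same route as the paper: the same splitting into interpolation/projection errors and discrete errors $\theta^k=\cdot_h^k-(\text{projection})^k$, the same test functions ($\overline{\theta}_\Psi^k$ and $\partial_\tau\theta_\Psi^k$ for the Schr\"odinger error equation, $\frac{1}{2\tau}(\theta_{\mathbf A}^k-\theta_{\mathbf A}^{k-2})$ for the Maxwell one), the same cancellation of the current-density cross terms between the two energy identities, and the same use of the discrete stability bounds plus $H^1\hookrightarrow L^6$ to avoid inverse inequalities before closing with discrete Gronwall. One small remark: the cancellation you worry about is in fact algebraically exact in the paper, because the Maxwell test function $\frac{1}{2\tau}(\theta_{\mathbf A}^k-\theta_{\mathbf A}^{k-2})=\frac{1}{2}(\partial_\tau\theta_{\mathbf A}^k+\partial_\tau\theta_{\mathbf A}^{k-1})$ coincides with the average produced by the Schr\"odinger energy identity, so only the consistency residuals of the staggered averages (not the cross terms themselves) need the $O(\tau^2)$ Taylor bookkeeping.
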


\section{Stability estimates}\label{sec-4}
In this section we first show the discrete system (\ref{eq:2-5})-(\ref{eq:2-6}) maintains the conservation of the total charge and energy. Then we deduce some stability estimates of the discrete solutions, which will be used to derive the error estimates in next section.

First we define the energy of the discrete system (\ref{eq:2-5})-(\ref{eq:2-6}) as follows:

\begin{equation}\label{eq:3.3}
\begin{array}{@{}l@{}}
{\displaystyle \mathcal{E}^{k}_{h}  = \frac{1}{2}B(\overline{\mathbf{A}}^{k}_{h}; \,  \Psi^{k}_{h},\, \Psi^{k}_{h}) + (V\Psi^{k}_{h}, \,\Psi^{k}_{h}) +\frac{1}{2}\Vert\nabla \phi^{k}_{h}\Vert_{\mathbf{L}^{2}}^{2} + \frac{1}{2}\Vert\partial_{\tau} \mathbf{A}_{h}^{k}\Vert_{\mathbf{L}^{2}}^{2} } \\[2mm]
{\displaystyle \qquad \qquad \qquad + \frac{1}{4} D(\mathbf{A}_{h}^{k},\,\mathbf{A}_{h}^{k}) +  \frac{1}{4} D(\mathbf{A}_{h}^{k-1},\,\mathbf{A}_{h}^{k-1}).  }
\end{array}
\end{equation}

Lemma~\ref{lem3-1} and Theorem~\ref{thm3-1} in the following are the discrete analogues of Lemma~\ref{lem6-4} and Theorem~\ref{thm6-0}, respectively. 

\begin{lemma}\label{lem3-1}
For $k=1,2\cdots,M$, the solution $(\Psi^{k}_{h}, \mathbf{A}_{h}^{k}, \phi_{h}^{k})$ of the discrete system (\ref{eq:2-5})-(\ref{eq:2-6}) satisfies
\begin{equation}\label{eq:3.4}
{\Vert \Psi_{h}^{k} \Vert}_{\mathcal{L}^2}^{2}={\Vert \Psi_{h}^{0} \Vert}_{\mathcal{L}^2}^{2},
\quad \mathcal{E}^{k}_{h} = \mathcal{E}^{0}_{h}.
\end{equation}
\end{lemma}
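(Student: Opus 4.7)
Charge conservation is the cleaner half. Test the Schr\"odinger equation in (\ref{eq:2-6}) with $\varphi = \overline{\Psi}_h^k$ and take imaginary parts. Since $B(\overline{\mathbf{A}}_h^k; \overline{\Psi}_h^k, \overline{\Psi}_h^k) = \Vert(\mathrm{i}\nabla+\overline{\mathbf{A}}_h^k)\overline{\Psi}_h^k\Vert_{\mathbf{L}^2}^2$ is real and both $(V\overline{\Psi}_h^k, \overline{\Psi}_h^k)$ and $(\overline{\phi}_h^k \overline{\Psi}_h^k, \overline{\Psi}_h^k)$ are real (the potentials are real-valued), the imaginary part of the tested equation collapses to $\mathrm{Re}(\partial_\tau \Psi_h^k, \overline{\Psi}_h^k) = 0$. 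The standard identity $\mathrm{Re}(u-v,u+v) = \Vert u\Vert^2 - \Vert v\Vert^2$ in a complex inner product space then yields $\Vert\Psi_h^k\Vert_{\mathcal{L}^2}^2 = \Vert\Psi_h^{k-1}\Vert_{\mathcal{L}^2}^2$, and induction closes the first statement of the lemma.

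For energy conservation the plan has three pieces. First, test the Schr\"odinger equation with $\varphi = \partial_\tau \Psi_h^k$, multiply by $2\tau$, and take real parts; applying the same midpoint identity (now to the operator $T = \mathrm{i}\nabla+\overline{\mathbf{A}}_h^k$ acting on $\Psi_h^k\pm\Psi_h^{k-1}$) produces
\begin{equation*}
\tfrac{1}{2}\bigl[B(\overline{\mathbf{A}}_h^k;\Psi_h^k,\Psi_h^k) - B(\overline{\mathbf{A}}_h^k;\Psi_h^{k-1},\Psi_h^{k-1})\bigr] + \bigl[(V\Psi_h^k,\Psi_h^k) - (V\Psi_h^{k-1},\Psi_h^{k-1})\bigr] + 2\tau\,\mathrm{Re}(\overline{\phi}_h^k\overline{\Psi}_h^k, \partial_\tau \Psi_h^k) = 0.
\end{equation*}
Second, test the vector-potential equation with $\mathbf{v} = \tfrac{1}{2}(\partial_\tau \mathbf{A}_h^k + \partial_\tau \mathbf{A}_h^{k-1}) = (\mathbf{A}_h^k - \mathbf{A}_h^{k-2})/(2\tau)$, which lies in $\mathbf{X}_{0h}$ by linearity; using $\widetilde{\mathbf{A}}_h^k = (\mathbf{A}_h^k + \mathbf{A}_h^{k-2})/2$ this gives
\begin{equation*}
\tfrac{1}{2}\bigl(\Vert\partial_\tau \mathbf{A}_h^k\Vert^2-\Vert\partial_\tau \mathbf{A}_h^{k-1}\Vert^2\bigr) + \tfrac{1}{4}\bigl(D(\mathbf{A}_h^k,\mathbf{A}_h^k)-D(\mathbf{A}_h^{k-2},\mathbf{A}_h^{k-2})\bigr) + R_1 + R_2 = 0,
\end{equation*}
with coupling residuals $R_1 = \bigl(f(\Psi_h^{k-1},\Psi_h^{k-1}),(\mathbf{A}_h^k-\mathbf{A}_h^{k-2})/2\bigr)$ and $R_2 = \tfrac{1}{4}\bigl(|\Psi_h^{k-1}|^2(\overline{\mathbf{A}}_h^k+\overline{\mathbf{A}}_h^{k-1}),\mathbf{A}_h^k-\mathbf{A}_h^{k-2}\bigr)$; the $D$-difference regroups cleanly as the increment of the symmetric pair $\tfrac{1}{4}D(\mathbf{A}_h^k,\mathbf{A}_h^k)+\tfrac{1}{4}D(\mathbf{A}_h^{k-1},\mathbf{A}_h^{k-1})$ stored in $\mathcal{E}_h^k$.

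The heart of the argument is the algebraic expansion $B(\mathbf{A};\Psi,\Psi) = \Vert\nabla\Psi\Vert^2 + 2(\mathbf{A},f(\Psi,\Psi)) + \int|\mathbf{A}|^2|\Psi|^2$, applied with $\Psi=\Psi_h^{k-1}$ and the two potentials $\overline{\mathbf{A}}_h^k,\overline{\mathbf{A}}_h^{k-1}$ (whose difference is $(\mathbf{A}_h^k-\mathbf{A}_h^{k-2})/2$): a direct calculation gives
\begin{equation*}
\tfrac{1}{2}\bigl[B(\overline{\mathbf{A}}_h^k;\Psi_h^{k-1},\Psi_h^{k-1}) - B(\overline{\mathbf{A}}_h^{k-1};\Psi_h^{k-1},\Psi_h^{k-1})\bigr] = R_1 + R_2.
\end{equation*}
Adding the two tested equations and substituting this identity absorbs $R_1+R_2$ into the Schr\"odinger $B$-difference and repairs the telescope: the $B$ contributions collapse to $\tfrac{1}{2}B(\overline{\mathbf{A}}_h^k;\Psi_h^k,\Psi_h^k) - \tfrac{1}{2}B(\overline{\mathbf{A}}_h^{k-1};\Psi_h^{k-1},\Psi_h^{k-1})$, matching the corresponding increment of $\mathcal{E}_h^k$ exactly.

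All that remains is the $\overline{\phi}$-coupling. Subtracting the Poisson equation in (\ref{eq:2-6}) at step $k-1$ from that at step $k$ and testing with $u = \overline{\phi}_h^k$ gives $\tfrac{1}{2}(\Vert\nabla\phi_h^k\Vert^2 - \Vert\nabla\phi_h^{k-1}\Vert^2) = (|\Psi_h^k|^2-|\Psi_h^{k-1}|^2, \overline{\phi}_h^k)$, and the same real-part computation used in the charge step (exploiting that $\overline{\phi}_h^k$ is real) rewrites the right-hand side as $2\tau\,\mathrm{Re}(\overline{\phi}_h^k\overline{\Psi}_h^k, \partial_\tau \Psi_h^k)$. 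This cancels the leftover residual, yielding $\mathcal{E}_h^k - \mathcal{E}_h^{k-1} = 0$, and the lemma follows by induction on $k$. The main difficulty is not any single estimate but the identification of the $B$-shift identity in the previous paragraph: it is the precise instance of the ``nonlinear cancellation'' announced in the introduction, and the scheme's specific choices of $|\Psi_h^{k-1}|^2$ (at the old time level), of the averaged coefficient $(\overline{\mathbf{A}}_h^k+\overline{\mathbf{A}}_h^{k-1})/2$, and of $\widetilde{\mathbf{A}}_h^k$ in (\ref{eq:2-6}) are tuned exactly so that $R_1+R_2$ matches the $B$-shift on the nose; keeping the factors $\tfrac{1}{2},\tfrac{1}{4}$ and the indices on $\overline{\mathbf{A}}_h^k$ vs $\overline{\mathbf{A}}_h^{k-1}$ straight is the only delicate point.
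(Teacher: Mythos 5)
Your proposal is correct and follows essentially the same route as the paper: charge conservation via $\varphi=\overline{\Psi}_h^k$ and imaginary parts; energy conservation via $\varphi=\partial_\tau\Psi_h^k$, $\mathbf{v}=\frac{1}{2\tau}(\mathbf{A}_h^k-\mathbf{A}_h^{k-2})$, the $B$-shift identity (the paper's (\ref{eq:3.6})) converting the potential mismatch $\frac12[B(\overline{\mathbf{A}}_h^k;\Psi_h^{k-1},\Psi_h^{k-1})-B(\overline{\mathbf{A}}_h^{k-1};\Psi_h^{k-1},\Psi_h^{k-1})]$ into exactly the coupling terms that cancel against the Maxwell equation, and the time-differenced Poisson equation tested with $\overline{\phi}_h^k$ to absorb the $\phi$-coupling. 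The algebra, including the identification of $R_1+R_2$ with the $B$-shift and the regrouping of the $D$-difference as the increment of the symmetric pair in $\mathcal{E}_h^k$, checks out.
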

\begin{proof}
The proof the this lemma is very similar to its continuous counterpart. For $(\ref{eq:3.4})_{1}$ we can simply choose $\varphi = \overline{\Psi}_{h}^{k}$ in $(\ref{eq:2-8})_1$ and take its imaginary part.

To prove $(\ref{eq:3.4})_{2}$, we first notice that
\begin{equation}
\begin{array}{lll}
{\displaystyle \mathrm{Re}\left[B\left(\overline{\mathbf{A}}_{h}^{k};\overline{\Psi}_{h}^{k},\partial_{\tau} \Psi_{h}^{k}\right)\right]=\frac{1}{2}\partial_{\tau} B(\overline{\mathbf{A}}_{h}^{k};{\Psi}_{h}^{k},\Psi_{h}^{k}) }\\[2mm]
{\displaystyle \quad\quad+ \frac{1}{2\tau}\left[B(\overline{\mathbf{A}}_{h}^{k-1};{\Psi}_{h}^{k-1},\Psi_{h}^{k-1})-B(\overline{\mathbf{A}}_{h}^{k};{\Psi}_{h}^{k-1},\Psi_{h}^{k-1})\right]}\\[2mm]
{\displaystyle \quad\quad+ \frac{1}{2 \tau}\mathrm{Re}\left[B(\overline{\mathbf{A}}_{h}^{k};\Psi_{h}^{k-1},\Psi_{h}^{k})-B(\overline{\mathbf{A}}_{h}^{k};\Psi_{h}^{k},\Psi_{h}^{k-1})\right].}
\end{array}
\end{equation}
We also have the following identities by direct calculations
\begin{equation}\label{eq:3.6}
\begin{array}{lll}
{\displaystyle B(\mathbf{A};\psi,\varphi)=(\nabla\psi,\nabla\varphi)+(\mathbf{A}\psi,\mathbf{A}\varphi)+2(f(\psi,\varphi),\mathbf{A}),}\\[2mm]
{\displaystyle B(\mathbf{A};\psi,\varphi)-B(\tilde{\mathbf{A}};\psi,\varphi)=\left((\mathbf{A}+\tilde{\mathbf{A}})\psi \varphi^{*},\mathbf{A}-\tilde{\mathbf{A}}\right)+2(f(\psi,\varphi),\mathbf{A}-\tilde{\mathbf{A}}),}
\end{array}
\end{equation}
from which we deduce
\begin{equation}
\mathrm{Re}\left[B(\overline{\mathbf{A}}_{h}^{k};\Psi_{h}^{k-1},\Psi_{h}^{k})-B(\overline{\mathbf{A}}_{h}^{k};\Psi_{h}^{k},\Psi_{h}^{k-1})\right] = 0.
\end{equation}

Thus we get
\begin{equation}\label{eq:3.7}
\begin{array}{lll}
{\displaystyle \mathrm{Re}\left[B\left(\overline{\mathbf{A}}_{h}^{k};\overline{\Psi}_{h}^{k},\partial_{\tau} \Psi_{h}^{k}\right)\right]=\frac{1}{2}\partial_{\tau} B(\overline{\mathbf{A}}_{h}^{k};{\Psi}_{h}^{k},\Psi_{h}^{k})}\\[2mm]
{\displaystyle - \left(|\Psi_{h}^{k-1}|^{2}\frac{\overline{\mathbf{A}}_{h}^{k}+\overline{\mathbf{A}}_{h}^{k-1}}{2},\frac{\overline{\mathbf{A}}_{h}^{k}-\overline{\mathbf{A}}_{h}^{k-1}}{\tau}\right)-\left(f(\Psi_{h}^{k-1},\Psi_{h}^{k-1}),\frac{\overline{\mathbf{A}}_{h}^{k}-\overline{\mathbf{A}}_{h}^{k-1}}{\tau}\right).}
\end{array}
\end{equation}

Also we have
\begin{equation}\label{eq:3.8}
\mathrm{Re}\left[\big(V\overline{\Psi}_{h}^{k},\partial_{\tau} \Psi_{h}^{k}\big)\right]=\frac{1}{2}\partial_{\tau}\big(V\Psi_{h}^{k},\Psi_{h}^{k}\big),\quad \mathrm{Re}\left[\big(\overline{\phi}_{h}^{k}\overline{\Psi}_{h}^{k},\partial_{\tau} \Psi_{h}^{k}\big)\right]=\frac{1}{2}\big(\overline{\phi}_{h}^{k}, \,\partial_{\tau} |\Psi_{h}^{k}|^{2}\big).
\end{equation}

By choosing $\varphi=\partial_{\tau}\Psi_{h}^{k}$ in $(\ref{eq:2-8})_1$, taking the real part of the equation and combining with (\ref{eq:3.7}) and (\ref{eq:3.8}), we get
\begin{equation}\label{eq:3.9}
\begin{array}{lll}
{\displaystyle \frac{1}{2}\partial_{\tau}\Vert\left(\mathrm{i}\nabla+\overline{\mathbf{A}}_{h}^{k}\right)\Psi_{h}^{k}\Vert_{\mathbf{L}^2}^{2} + \partial_{\tau}\big(V\Psi_{h}^{k},\Psi_{h}^{k}\big) + \big(\overline{\phi}_{h}^{k}, \,\partial_{\tau} |\Psi_{h}^{k}|^{2}\big)  }\\[2mm]
{\displaystyle  - \left(|\Psi_{h}^{k-1}|^{2}\frac{\overline{\mathbf{A}}_{h}^{k}+\overline{\mathbf{A}}_{h}^{k-1}}{2},\frac{\overline{\mathbf{A}}_{h}^{k}-\overline{\mathbf{A}}_{h}^{k-1}}{\tau}\right)-\left(f(\Psi_{h}^{k-1},\Psi_{h}^{k-1}),\frac{\overline{\mathbf{A}}_{h}^{k}-\overline{\mathbf{A}}_{h}^{k-1}}{\tau}\right)=0.}
\end{array}
\end{equation}

Next by taking $\mathbf{v}=\frac{1}{2\tau}(\mathbf{A}_{h}^{k}-\mathbf{A}_{h}^{k-2})$ and adding it to (\ref{eq:3.9}), we obtain
\begin{equation}\label{eq:3.10}
\begin{array}{lll}
{\displaystyle \partial_{\tau} \left(\frac{1}{2}  B(\overline{\mathbf{A}}_{h}^{k};\Psi_{h}^{k},\Psi_{h}^{k})+\big(V\Psi_{h}^{k},\Psi_{h}^{k}\big) + \frac{1}{2}\Vert\partial_{\tau} \mathbf{A}_{h}^{k}\Vert_{\mathbf{L}^2}^{2}\right)}\\[2mm]
{\displaystyle \quad+\partial_{\tau}\left(\frac{1}{4} \Vert \nabla\times\mathbf{A}_{h}^{k}\Vert_{\mathbf{L}^{2}}^{2} +\frac{1}{4}\Vert \nabla\times\mathbf{A}_{h}^{k-1}\Vert_{\mathbf{L}^{2}}^{2}\right)+\big(\overline{\phi}_{h}^{k}, \,\partial_{\tau} |\Psi_{h}^{k}|^{2}\big) = 0.}
\end{array}
\end{equation}
Finally it is easy to deduce the following equation from the last equation of $(\ref{eq:2-8})$:
\begin{equation}\label{eq:3.11}
 (\nabla \,\partial_{\tau}{\phi}_{h}^{k},\,\nabla u) = (\partial_{\tau}\vert\Psi_{h}^{k}\vert^{2},\,u),\quad\forall u \in X_{h}^{r}.
\end{equation} 
 Take $u = \overline{\phi}_{h}^{k}$ in (\ref{eq:3.11}), insert it into (\ref{eq:3.10}) and we complete the proof of $(\ref{eq:3.4})_{2}$.
\end{proof}

\begin{theorem}\label{thm3-1}
The solution of the discrete system (\ref{eq:2-7}) fulfills the following estimate
\begin{equation}\label{eq:3.12}
 \Vert\Psi_{h}^{k}\Vert_{\mathcal{H}^1} + \Vert\partial_{\tau} \mathbf{A}_{h}^{k}\Vert_{\mathbf{L}^2} + \Vert\mathbf{A}_{h}^{k}\Vert_{\mathbf{H}^1}+\Vert \phi_{h}^{k} \Vert_{H^1}   \leq C,
\end{equation}
where $C$ is independent of $h$, $\tau$ and $k$.
\end{theorem}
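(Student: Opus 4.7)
The plan is to mirror the continuous energy estimate of Theorem~\ref{thm6-0}, using the discrete energy conservation of Lemma~\ref{lem3-1} in place of the continuous one. First I would show that $\mathcal{E}_{h}^{0}$ is bounded independently of $h,\tau$. This uses the initial-data regularity (\ref{eq:2-10}), the stability of the Lagrange interpolant $\mathcal{I}_h$ and the Ritz projection ${\bm \pi}_h$, the definition (\ref{eq:2-5}) of $\mathbf{A}_h^{-1}$ (so that $\partial_\tau\mathbf{A}_h^{0} = {\bm \pi}_h\mathbf{A}_1$), and the assumed $L^\infty$-boundedness of $V$. By Lemma~\ref{lem3-1} we then have $\mathcal{E}_h^k=\mathcal{E}_h^0\le C$ and the conservation of charge $\|\Psi_h^k\|_{\mathcal{L}^2}=\|\Psi_h^0\|_{\mathcal{L}^2}\le C$. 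Since $|(V\Psi_h^k,\Psi_h^k)|\le \|V\|_{L^\infty}\|\Psi_h^k\|_{\mathcal{L}^2}^2$ is bounded, rearranging (\ref{eq:3.3}) yields
\begin{equation*}
\|(\mathrm{i}\nabla+\overline{\mathbf{A}}_h^k)\Psi_h^k\|_{\mathbf{L}^2}^2+\|\partial_\tau\mathbf{A}_h^k\|_{\mathbf{L}^2}^2+\|\nabla\phi_h^k\|_{\mathbf{L}^2}^2+D(\mathbf{A}_h^k,\mathbf{A}_h^k)+D(\mathbf{A}_h^{k-1},\mathbf{A}_h^{k-1})\le C.
\end{equation*}

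Second, because $\mathbf{X}_{0h}\subset\mathbf{X}_h\subset\mathbf{H}_t^1(\Omega)$, the semi-norm $|\mathbf{u}|_{\mathbf{H}_t^1}^2=D(\mathbf{u},\mathbf{u})$ is equivalent to $\|\mathbf{u}\|_{\mathbf{H}^1}^2$ on $\mathbf{X}_{0h}$. Therefore the $D$-bound above immediately gives $\|\mathbf{A}_h^k\|_{\mathbf{H}^1}\le C$ and, by Sobolev embedding, $\|\mathbf{A}_h^k\|_{\mathbf{L}^6}\le C$, hence $\|\overline{\mathbf{A}}_h^k\|_{\mathbf{L}^6}\le C$ as well. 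For $\Psi_h^k$, I would write
\begin{equation*}
\|\nabla\Psi_h^k\|_{\mathbf{L}^2}\le\|(\mathrm{i}\nabla+\overline{\mathbf{A}}_h^k)\Psi_h^k\|_{\mathbf{L}^2}+\|\overline{\mathbf{A}}_h^k\Psi_h^k\|_{\mathbf{L}^2},
\end{equation*}
estimate $\|\overline{\mathbf{A}}_h^k\Psi_h^k\|_{\mathbf{L}^2}\le\|\overline{\mathbf{A}}_h^k\|_{\mathbf{L}^6}\|\Psi_h^k\|_{\mathcal{L}^3}\le C\|\Psi_h^k\|_{\mathcal{L}^3}$ via H\"older, and then apply Lemma~\ref{lem6-2} with $p=3$ and sufficiently small $\epsilon$:
\begin{equation*}
\|\Psi_h^k\|_{\mathcal{L}^3}\le\epsilon\|\nabla\Psi_h^k\|_{\mathbf{L}^2}+C_\epsilon\|\Psi_h^k\|_{\mathcal{L}^2}.
\end{equation*}
Choosing $\epsilon$ small enough to absorb the gradient term into the left-hand side, together with charge conservation, yields $\|\Psi_h^k\|_{\mathcal{H}^1}\le C$. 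Finally, $\|\phi_h^k\|_{H^1}\le C\|\nabla\phi_h^k\|_{\mathbf{L}^2}\le C$ follows from Poincar\'e's inequality.

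The main obstacle to watch out for is not analytical but structural: even though $\mathbf{X}_{0h}$ enforces only a \emph{weak} divergence-free condition against $X_h^1$ (so $\mathbf{X}_{0h}\not\subset\mathbf{H}_{t,0}^1(\Omega)$ in general), the functions in $\mathbf{X}_{0h}$ do lie in $\mathbf{H}_t^1(\Omega)$, and the penalty term $(\nabla\cdot\widetilde{\mathbf{A}}_h^k,\nabla\cdot\mathbf{v})$ built into the scheme is precisely what produces the full $D$-form in the conserved energy. This is what allows me to bypass any need for discrete inverse inequalities and keeps the constant $C$ independent of $h$, $\tau$, and $k$. All remaining manipulations (arithmetic means $\overline{\mathbf{A}}_h^k$, the index $k-1$ appearing in the $D$-bound, the handling of $\mathbf{A}_h^{-1}$) are routine.
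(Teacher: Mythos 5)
Your proposal is correct and follows essentially the route the paper intends: the paper omits the proof of Theorem~\ref{thm3-1}, stating only that it parallels Theorem~\ref{thm6-0}, and your argument is precisely that discrete adaptation (bound $\mathcal{E}_h^0$, invoke Lemma~\ref{lem3-1}, use the equivalence of the $D$-seminorm with the $\mathbf{H}^1$-norm on $\mathbf{H}_t^1(\Omega)$, absorb $\|\overline{\mathbf{A}}_h^k\Psi_h^k\|_{\mathbf{L}^2}$ via H\"older and Lemma~\ref{lem6-2}, and finish with Poincar\'e). Your remark that the added divergence penalty is exactly what supplies the full $D$-form needed for the $\mathbf{H}^1$-equivalence on $\mathbf{X}_{0h}\subset\mathbf{H}_t^1(\Omega)$ is an accurate reading of the scheme's design.
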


The proof of this theorem is very similar to its continuous counterpart,  i.e. Theorem~\ref{thm6-0},  and thus we omit the proof.

\section{Solvability of the discrete system}\label{sec-5} In this section we consider the existence and uniqueness of the solutions to the discrete system (\ref{eq:2-5})-(\ref{eq:2-6}). To prove it, we first introduce a useful lemma in \cite{Browder} as follows.
\begin{lemma}\label{lem5-0}
Let $\big(H, \langle\cdot,\cdot\rangle   \big)$ be a finite-dimensional inner product space, $\Vert \cdot \Vert_{H}$ be the associated norm, and $g:H\longrightarrow H$ be continuous. Assume that
\begin{equation*}
\exists \,\alpha > 0,\quad \forall z \in H,\;\; \Vert z \Vert_{H}  = \alpha, \;\;{\rm Re}\langle g(z), z\rangle >0.
\end{equation*}
Then there exists a $z_0\in H$ such that $g(z_0) = 0$ and $\Vert z_0\Vert_{H}\leq \alpha$.
\end{lemma}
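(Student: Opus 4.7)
The plan is to prove this classical result (often attributed to Browder) by contradiction, reducing it to Brouwer's fixed point theorem on the closed ball $\overline{B_\alpha} = \{z \in H : \|z\|_H \leq \alpha\}$. Suppose for contradiction that $g(z) \neq 0$ for every $z \in \overline{B_\alpha}$. Then $\|g(z)\|_H$ is a strictly positive continuous function on the compact set $\overline{B_\alpha}$.

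Define the continuous map $F : \overline{B_\alpha} \to \overline{B_\alpha}$ by
\begin{equation*}
F(z) = -\alpha\,\frac{g(z)}{\|g(z)\|_H}.
\end{equation*}
This is well-defined since $g(z) \neq 0$, continuous since $g$ is, and maps into $\overline{B_\alpha}$ because $\|F(z)\|_H = \alpha$ for every $z \in \overline{B_\alpha}$. Identifying the finite-dimensional inner product space $H$ with a Euclidean space (over $\mathbb{R}$, or over $\mathbb{R}^{2\dim_{\mathbb{C}} H}$ if $H$ is complex), Brouwer's fixed point theorem produces a point $z_0 \in \overline{B_\alpha}$ with $F(z_0) = z_0$. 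In particular $\|z_0\|_H = \alpha$.

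Next I would compute
\begin{equation*}
\langle g(z_0),\, z_0\rangle \;=\; \langle g(z_0),\, F(z_0)\rangle \;=\; \Bigl\langle g(z_0),\, -\alpha\,\frac{g(z_0)}{\|g(z_0)\|_H}\Bigr\rangle \;=\; -\alpha\,\|g(z_0)\|_H,
\end{equation*}
so $\mathrm{Re}\langle g(z_0), z_0\rangle = -\alpha\,\|g(z_0)\|_H < 0$. This directly contradicts the hypothesis $\mathrm{Re}\langle g(z), z\rangle > 0$ for all $z$ with $\|z\|_H = \alpha$, since $\|z_0\|_H = \alpha$. Hence the assumption that $g$ is nonvanishing on $\overline{B_\alpha}$ is false, and there exists $z_0 \in \overline{B_\alpha}$ with $g(z_0) = 0$, which is the desired conclusion.

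The only mildly delicate point is making sure the argument applies when $H$ is a complex inner product space, as will be the case in the application to the Schr\"odinger component of the discrete system: in that setting the sesquilinear pairing $\langle\cdot,\cdot\rangle$ is complex-valued, but the computation above yields a real (in fact negative real) value for $\langle g(z_0), z_0\rangle$, so taking real parts poses no difficulty. The substantive ingredient is Brouwer's theorem; everything else is bookkeeping. I do not anticipate any real obstacle.
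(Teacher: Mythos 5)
Your proof is correct, and it is the standard argument for this classical result: assume $g$ is nonvanishing on the closed ball, retract via $F(z)=-\alpha\,g(z)/\Vert g(z)\Vert_{H}$, apply Brouwer's fixed point theorem, and derive $\mathrm{Re}\langle g(z_0),z_0\rangle=-\alpha\Vert g(z_0)\Vert_{H}<0$ at the fixed point, contradicting the hypothesis on the sphere; the handling of the complex case is also fine, since the scalar $-\alpha/\Vert g(z_0)\Vert_{H}$ is real. Note that the paper itself does not prove this lemma --- it is quoted from the reference [Browder] --- so there is no in-paper proof to compare against, but your argument is exactly the expected one.
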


\begin{theorem}\label{thm5-0}
For any $\big(\Psi_{h}^{k-1}, \mathbf{A}_{h}^{k-2}, \mathbf{A}_{h}^{k-1}, \phi_{h}^{k-1}\big)$ satisfies (\ref{eq:3.12}), there exists a solution $\big(\Psi_{h}^{k}, \mathbf{A}_{h}^{k}, \phi_{h}^{k}\big)$ to the discrete system (\ref{eq:2-6}). Furthermore, if the time step $\tau$ is sufficiently small, the solution is unique.
\end{theorem}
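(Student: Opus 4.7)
Split the argument into existence and uniqueness. The equation for $\mathbf{A}_h^k$ decouples from the others: its bilinear form $a_h^k(\cdot,\cdot)$ is symmetric and coercive on $\mathbf{X}_h$, so Lax--Milgram applied in $\mathbf{X}_{0h}$ (equivalently, the mixed formulation (\ref{eq:2-6-2}) together with the discrete inf--sup condition (\ref{eq:2-1})) produces a unique $\mathbf{A}_h^k$. It then remains to solve the coupled nonlinear subsystem for $(\Psi_h^k,\phi_h^k)\in H:=\mathcal{X}_h^r\times X_h^r$, to which I would apply Lemma~\ref{lem5-0}. Endow $H$ with the natural real product inner product and define a continuous mapping $g:H\to H$ by Riesz representation of the real-valued form
\begin{equation*}
F(\Psi,\phi;\varphi,u)=\mathrm{Re}\Big[-\mathrm{i}(\partial_\tau\Psi,\varphi)+\tfrac{1}{2}B(\overline{\mathbf{A}}_h^k;\overline{\Psi},\varphi)+\big((V+\overline{\phi})\overline{\Psi},\varphi\big)\Big]+\tfrac{1}{4}\big[(\nabla\phi,\nabla u)-(|\Psi|^2,u)\big].
\end{equation*}
Since the Schr\"odinger bracket is $\mathbb{C}$-antilinear in $\varphi$, vanishing of its real part for every complex $\varphi$ (apply to $\varphi$ and $\mathrm{i}\varphi$) recovers the full complex equation, so $g(z)=0$ is equivalent to $z=(\Psi_h^k,\phi_h^k)$ solving the two remaining equations of (\ref{eq:2-6}).

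\textbf{Coercivity and existence.} Compute $\mathrm{Re}\langle g(z),z\rangle_H=F(\Psi,\phi;\Psi,\phi)$ and mimic the discrete energy identity of Lemma~\ref{lem3-1}. Substituting $\phi=2\overline{\phi}-\phi_h^{k-1}$, the indefinite cross term $\tfrac{1}{2}\int\overline{\phi}|\Psi|^2$ arising from $\mathrm{Re}[((V+\overline{\phi})\overline{\Psi},\Psi)]$ cancels exactly the term $-\tfrac{1}{2}\int|\Psi|^2\overline{\phi}$ produced by $-\tfrac{1}{4}(|\Psi|^2,\phi)$; the weight $\tfrac{1}{4}$ is forced precisely by this cancellation. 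What remains is the positive quadratic contribution $\tfrac{1}{2}\mathrm{Re}\,B(\overline{\mathbf{A}}_h^k;\overline{\Psi},\Psi)+\tfrac{1}{4}\|\nabla\phi\|_{\mathbf{L}^2}^2$, a term $-\tfrac{1}{\tau}\mathrm{Im}(\Psi_h^{k-1},\Psi)$ linear in $\Psi$, and bilinear remainders involving the known data $\Psi_h^{k-1},\phi_h^{k-1},\mathbf{A}_h^{k-1}$. Applying Lemma~\ref{lem6-2} (with a small absorption parameter) to the $\mathcal{L}^3$-type cross terms, Young's inequality, and Poincar\'e's inequality, one obtains
\begin{equation*}
\mathrm{Re}\langle g(z),z\rangle_H\geq c\,\|z\|_{\mathcal{H}^1\times H^1}^2-C_\tau(1+\|z\|_H),\qquad c>0\text{ independent of }z.
\end{equation*}
For $\|z\|_H$ sufficiently large this is strictly positive, and Lemma~\ref{lem5-0} produces a zero, yielding the required $(\Psi_h^k,\phi_h^k)$. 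The main obstacle is exactly this coercivity step: without the weight $\tfrac{1}{4}$ the form is indefinite because of the sign-indeterminate $\overline{\phi}|\Psi|^2$ interaction, and the observation that this weight is what closes the discrete energy identity of Lemma~\ref{lem3-1} is the essential structural input; thereafter the analysis reduces to standard interpolation and absorption.

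\textbf{Uniqueness for small $\tau$.} Suppose two solutions $(\Psi_i,\phi_i)$, $i=1,2$, share the data at step $k-1$; set $\eta=\Psi_1-\Psi_2$, $\xi=\phi_1-\phi_2$, so $\partial_\tau\eta=\eta/\tau$ and $\overline{\eta}=\eta/2$. Subtracting the two Schr\"odinger equations and testing with $\overline{\eta}$, the imaginary part gives
\begin{equation*}
\|\eta\|_{\mathcal{L}^2}^2/(2\tau)=\mathrm{Im}\big(\overline{\xi}\,\overline{\Psi}_2,\overline{\eta}\big)\leq C\,\|\xi\|_{H^1}\,\|\eta\|_{\mathcal{L}^2}
\end{equation*}
by H\"older and Sobolev embedding, while its real part, after absorbing the $V|\overline{\eta}|^2$ and $\overline{\phi}_1|\overline{\eta}|^2$ terms via Lemma~\ref{lem6-2}, yields $\|\eta\|_{\mathcal{H}^1}\leq C\|\eta\|_{\mathcal{L}^2}$. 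Subtracting the two Poisson equations, using the identity $|\Psi_1|^2-|\Psi_2|^2=\mathrm{Re}[(\Psi_1+\Psi_2)\overline{\eta}]$ and the stability bounds of Theorem~\ref{thm3-1}, gives $\|\xi\|_{H^1}\leq C\|\eta\|_{\mathcal{H}^1}$. Chaining these inequalities produces $\|\eta\|_{\mathcal{L}^2}^2\leq C\tau\|\eta\|_{\mathcal{L}^2}^2$, which for $\tau$ sufficiently small forces $\eta=0$, and then $\xi=0$ follows from the Poisson equation.
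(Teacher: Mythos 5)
Your decoupling of the $\mathbf{A}_h^k$-equation and your uniqueness argument both match the paper. In particular the uniqueness chain is exactly the paper's: the imaginary part of the difference of the Schr\"odinger equations gives $\Vert\eta\Vert_{\mathcal L^2}\le C\tau\Vert\xi\Vert_{L^6}$, and the Poisson difference tested with $\xi$ gives $\Vert\xi\Vert_{H^1}\le C\Vert\eta\Vert_{\mathcal L^2}$ directly (H\"older with $L^3\times L^2\hookrightarrow L^{6/5}$ plus Theorem~\ref{thm3-1}), so your intermediate bound $\Vert\eta\Vert_{\mathcal H^1}\le C\Vert\eta\Vert_{\mathcal L^2}$ is superfluous.

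The existence step, however, has a genuine gap in the coercivity claim. Taking the real part of the untransformed Schr\"odinger equation annihilates the only term that could give coercivity at the $\mathcal L^2$ level: $\mathrm{Re}\bigl[-\mathrm i(\partial_\tau\Psi,\Psi)\bigr]=-\tfrac1\tau\mathrm{Im}(\Psi_h^{k-1},\Psi)$ is merely linear in $\Psi$, and no $\tfrac1\tau\Vert\Psi\Vert_{\mathcal L^2}^2$ survives. Your weight $\tfrac14$ does cancel the cubic term $\tfrac14\int\phi|\Psi|^2$ exactly, but what remains is not just the nonnegative part $\tfrac14B(\overline{\mathbf A}_h^k;\Psi,\Psi)+\tfrac14\Vert\nabla\phi\Vert_{\mathbf L^2}^2$ plus linear terms: there are sign-indefinite quadratic remainders, e.g.\ $\tfrac12\int V|\Psi|^2$, $\tfrac14\int\phi_h^{k-1}|\Psi|^2$, $\tfrac14\mathrm{Re}\int\phi\,\Psi_h^{k-1}\Psi^{\ast}$, and the loss $-C\Vert\Psi\Vert_{\mathcal L^2}^2$ incurred when extracting $\Vert\nabla\Psi\Vert_{\mathbf L^2}^2$ from $B$. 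None of these carries a small factor relative to the positive part and none is dominated by it, so the inequality $\mathrm{Re}\langle g(z),z\rangle\ge c\Vert z\Vert_{\mathcal H^1\times H^1}^2-C_\tau(1+\Vert z\Vert_H)$ with $c>0$ is not established and fails in general. The paper's proof avoids all of this by (i) eliminating $\phi$, i.e.\ regarding $\overline\phi_h^k$ as determined by $\overline\Psi_h^k$ through the discrete Poisson solve so that the fixed-point unknown is $\overline\Psi_h^k$ alone, and (ii) multiplying the Schr\"odinger equation by $-\mathrm i\tau/2$ before pairing with $\overline\Psi_h^k$: the identity term then supplies $\Vert\overline\Psi_h^k\Vert_{\mathcal L^2}^2$ exactly, while every spatial term ($B$, $V$, $\overline\phi$) is real on the diagonal and hence, after the multiplication by $\mathrm i$, contributes nothing to the real part, giving $\mathrm{Re}\langle g(\vec e),\vec e\rangle=\Vert\vec e\Vert_H^2-\mathrm{Re}\langle\vec c,\vec e\rangle$ with no estimates at all. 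You would need to restructure your map $g$ in this way (or accept a smallness condition on $\tau$ for existence, which the theorem does not impose).
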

\begin{proof}
As noted in Section~\ref{sec-3}, to solve the discrete system (\ref{eq:2-5})-(\ref{eq:2-6}), we need to solve (\ref{eq:2-6-1}) and the following subsystem alternately. 
\begin{equation}\label{eq:5.1}
\left\{
\begin{array}{@{}l@{}}
{\displaystyle
 -\mathrm{i}({\partial_{\tau}}\Psi_{h}^{k},\varphi)+
\frac{1}{2}\left((\mathrm{i}\nabla +\overline{\mathbf{A}}^{k}_{h})\overline{\Psi}_{h}^{k},(\mathrm{i}\nabla +\overline{\mathbf{A}}_{h}^{k})\varphi\right)
 +\left( (V+\overline{\phi}_{h}^{k})\overline{\Psi}_{h}^{k},\varphi\right) = 0 ,}\\[2mm]
 {\displaystyle \qquad \qquad\qquad \qquad\qquad \qquad\qquad \qquad\qquad \quad  \qquad \quad\qquad    \quad \forall \varphi\in\mathcal{X}_{h}^{r},}\\[2mm]
 {\displaystyle (\nabla {\phi}_{h}^{k}, \,\nabla u) = (\vert\Psi_{h}^{k}\vert^{2},\,u),\quad\forall u \in X_{h}^{r}},
\end{array}
\right.
\end{equation}
Since we have proved the solvability of (\ref{eq:2-6-1}) in Section~\ref{sec-3}, we only need to consider the solvability of (\ref{eq:5.1}), which can be rewritten as follows:
\begin{equation}\label{eq:5.2}
\left\{
\begin{array}{@{}l@{}}
{\displaystyle
(\overline{\Psi}_{h}^{k},\,\varphi) = ({\Psi}_{h}^{k-1},\varphi)
-{\rm i}\frac{\tau}{4}B(\overline{\mathbf{A}}_{h}^{k};\,\overline{\Psi}_{h}^{k},\,\varphi)
 -{\rm i}\frac{\tau}{2}\left( (V+\overline{\phi}_{h}^{k})\overline{\Psi}_{h}^{k},\,\varphi\right), \forall \varphi\in\mathcal{X}_{h}^{r}}\\[2mm]
 {\displaystyle (\nabla \overline{\phi}_{h}^{k}, \,\nabla u) =\big(\frac{1}{2}(|\Psi_{h}^{k-1}|^{2} + |2\overline{\Psi}_{h}^{k} - \Psi_{h}^{k-1}|^{2}),\, u\big),\quad\forall u \in X_{h}^{r}}.
\end{array}
\right.
\end{equation}

For a given $h$, assume that $\{u_1, u_2,\cdots,u_N\}$ is a basis of $X_{h}^{r}$. Note that $\mathcal{X}_{h}^{r} = X_{h}^{r} \oplus {\rm i}X_{h}^{r}$. Then $\overline{\phi}_{h}^{k}$, $\overline{\Psi}_{h}^{k}$, and $\Psi_{h}^{k-1}$ can be written as follows 
\begin{equation}  
\overline{\phi}_{h}^{k} = \sum_{j=1}^{N}{a_{j}u_j},\quad \overline{\Psi}_{h}^{k} = \sum_{j=1}^{N}{b_{j}u_j},\quad {\Psi}_{h}^{k-1} = \sum_{j=1}^{N}{c_{j}u_j},
\end{equation}
where 
\begin{equation}
\vec{a} = (a_1,\cdots, a_N)^{T} \in \mathbb{R}^{N}, \, \vec{b} = (b_1,\cdots, b_N)^{T} \in \mathbb{C}^{N},\, \vec{c} = (c_1,\cdots, c_N)^{T} \in \mathbb{C}^{N}.
\end{equation}
Denote by
\begin{equation}\label{eq:5.3}
\begin{array}{@{}l@{}}
{\displaystyle W = \big( w_{ij} \big) \in \mathbb{R}^{N\times N}, \quad\vec{r} = (r_1,\cdots, r_N)^{T}\in \mathbb{R}^{N}, }\\[2mm]
{\displaystyle S = \big(s_{ij}\big)\in \mathbb{R}^{N\times N}, \quad  Q = \big(q_{ij}\big)\in \mathbb{R}^{N\times N},  }
\end{array}
\end{equation}
where
\begin{equation}\label{eq:5.4}
\begin{array}{@{}l@{}}
{\displaystyle w_{ij} = (u_i, u_j), \;r_i=\frac{1}{2}\big(|\Psi_{h}^{k-1}|^{2} + |2\overline{\Psi}_{h}^{k} - \Psi_{h}^{k-1}|^{2},\, u_i\big), }\\[2mm]
{\displaystyle  s_{ij} = B(\overline{\mathbf{A}}_{h}^{k};\, u_i,\,u_j), \quad q_{ij} = \big( (V + \overline{\phi}_{h}^{k})u_i, \, u_j\big) \quad i,j = 1,\cdots, N. }
\end{array}
\end{equation}
Using the above notation, we can write (\ref{eq:5.2}) in the form of matrix: 
\begin{equation}\label{eq:5.5}
W\vec{b} = W\vec{c}-{\rm i}\frac{\tau}{4}S\vec{b} - {\rm i}\frac{\tau}{2}Q\vec{b}, \quad W\vec{a} = \vec{r},\quad Q = Q(\vec{a}),  \quad \vec{r} = \vec{r}(\vec{b}),
\end{equation}
or a more compact form
\begin{equation}\label{eq:5.6}
\vec{b} = \vec{c}-{\rm i}\frac{\tau}{4}W^{-1}S\vec{b} - {\rm i}\frac{\tau}{2}W^{-1}Q(\vec{b})\vec{b}.
\end{equation}

Now we define a finite dimensional space $\big(H, \langle\cdot,\cdot\rangle   \big)$ and a mapping $g: H \longrightarrow H$ as following:
\begin{equation}
\begin{array}{@{}l@{}}
{\displaystyle H = \mathbb{C}^{N}, \quad \langle \vec{e}_{1} , \vec{e}_{2}\rangle = (\vec{e}_{2})^{\ast}W\vec{e}_{1}, \quad \forall \vec{e}_{1}, \; \vec{e}_{2} \in H,   }\\[2mm]
{\displaystyle g(\vec{e}) = \vec{e}-\vec{c}+{\rm i}\frac{\tau}{4}W^{-1}S\vec{e} + {\rm i}\frac{\tau}{2}W^{-1}Q(\vec{e})\vec{e}, \quad \forall \vec{e}\in H, }
\end{array}
\end{equation}
where $(\vec{e}_{2})^{\ast} $ denotes the conjugate transpose of $\vec{e}_{2}$ and $W$, $S$ and $Q$ are defined in (\ref{eq:5.3})-(\ref{eq:5.4}). Obviously $g$ is continuous. Moreover,
\begin{equation}
{\rm Re}\langle g(\vec{e}),\,\vec{e}\rangle = \Vert \vec{e} \Vert_{H} ^{2}- {\rm Re}\langle \vec{c},\, \vec{e}\rangle \geq \Vert \vec{e} \Vert_{H}\big(\Vert \vec{e} \Vert_{H} - \Vert \vec{c} \Vert_{H} \big), \quad \forall \,\vec{e} \in H,
\end{equation}
which implies that 
\begin{equation}
{\rm Re}\langle g(\vec{e}),\,\vec{e}\rangle > 0 , \quad {\rm if} \;\Vert \vec{e} \Vert_{H} = \Vert \vec{c} \Vert_{H} +1.
\end{equation}
Thus the existence of $\vec{b}$ and $\vec{a}=W^{-1}\vec{r}(\vec{b})$ for (\ref{eq:5.6}) follows from Lemma~\ref{lem5-0}. Combining with the existence of $\mathbf{A}_{h}^{k}$, we obtain the existence of $\big(\Psi_{h}^{k}, \mathbf{A}_{h}^{k}, \phi_{h}^{k}\big)$ to the discrete system (\ref{eq:2-6}).

Now we study the uniqueness of the solutions to (\ref{eq:5.1}). Let $(\Psi_{h}^{k}, \phi_{h}^{k})$ and $(\hat{\Psi}_{h}^{k}, \hat{\phi}_{h}^{k})$ be two solutions of (\ref{eq:5.1}). Set $\eta = \Psi_{h}^{k} - \hat{\Psi}_{h}^{k},\; \psi = \phi_{h}^{k} - \hat{\phi}_{h}^{k}.$ They satisfy
\begin{equation}\label{eq:5.7}
\left\{
\begin{array}{@{}l@{}}
{\displaystyle
 -\mathrm{i}(\eta,\varphi)+
\frac{\tau}{4}B(\overline{\mathbf{A}}^{k}_{h};\,\eta,\,\varphi) +\frac{\tau}{2}\big((V+\overline{\phi}_{h}^{k})\eta,\,\varphi\big) + \frac{\tau}{4} \big((\hat{\Psi}_{h}^{k} + {\Psi}_{h}^{k-1})\psi,\,\varphi\big)
  = 0, }\\[2mm]
  {\displaystyle \qquad \qquad\qquad \qquad\qquad \qquad\qquad \qquad\qquad \quad  \qquad \quad\qquad    \quad \forall \varphi\in\mathcal{X}_{h}^{r},}\\[2mm]
 {\displaystyle (\nabla \psi, \,\nabla u) = ( (\Psi_{h}^{k})^{\ast}\eta + {\eta}^{\ast}\hat{\Psi}_{h}^{k} ,\,u),\quad\forall u \in X_{h}^{r}},
\end{array}
\right.
\end{equation}
By choosing $\varphi = \eta$ in the first equation of (\ref{eq:5.7}) and taking its imaginary part, we obtain
\begin{equation}\label{eq:5.8}
\Vert \eta \Vert_{\mathcal{L}^{2}} \leq C \tau \Vert\psi\Vert_{L^6}.
\end{equation}
Take $u=\psi$ in the second equation of (\ref{eq:5.7}) and we get
\begin{equation}\label{eq:5.9}
\Vert\psi\Vert_{L^6} \leq C \Vert\nabla \psi\Vert_{\mathbf{L}^2} \leq C\Vert \eta \Vert_{\mathcal{L}^{2}}.
\end{equation}
By substituting (\ref{eq:5.9}) into (\ref{eq:5.8}) and taking $\tau$ sufficiently small, we find $\eta = 0$ and consequently obtain the uniqueness of the solutions. \quad \end{proof}
\section{The error estimates}\label{sec-6}
We now turn to the proof of Theorem~\ref{thm2-1}. Set $e_{\Psi}=\mathcal{I}_{h}\Psi-\Psi$,  $e_{\mathbf{A}}={\bm \pi}_{h}\mathbf{A}-\mathbf{A}$ and $e_{\phi} = I_{h}\phi-\phi$, where $\mathcal{I}_{h}$, ${\bm \pi}_{h}$ and $I_{h}$ are defined in Section~\ref{sec-3}. From the regularity assumption and the interpolation error estimates (\ref{subeq:2.00}), (\ref{subeq:2.01}) and (\ref{eq:2-2}),
we deduce
\begin{equation}\label{eq:4-0}
\Vert e_{\Psi} \Vert_{\mathcal{H}^{1}} + \Vert e_{\mathbf{A}} \Vert_{\mathbf{H}^{1}} + \Vert e_{\phi} \Vert_{{H}^{1}} \leq Ch^{r}.
\end{equation}

By using standard finite element theory \cite{Bre} and the regularity assumption (\ref{eq:2-9}), we also have 
\begin{equation}\label{eq:4-1}
\Vert {\mathcal{I}}_{h}\Psi \Vert_{\mathcal{L}^{\infty}} + \Vert \nabla {\mathcal{I}}_{h}\Psi \Vert_{\mathbf{L}^{3}} + \Vert {\bm \pi}_{h}\mathbf{A} \Vert_{\mathbf{H}^{1}}\leq C .
\end{equation}

In the rest of this paper,  we need the following discrete integration by parts formulas. 
\begin{equation}\label{eq:4-2}
\begin{array}{lll}
{\displaystyle \sum_{k=1}^{M}{(a_{k}-a_{k-1})b_{k}}=a_{M}b_{M}-a_{0}b_{1}-\sum_{k=1}^{M-1}{a_{k}(b_{k+1}-b_{k})}.}\\[2mm]
{\displaystyle \sum_{k=1}^{M}{(a_{k}-a_{k-1})b_{k}}=a_{M}b_{M}-a_{0}b_{0}-\sum_{k=1}^{M}{a_{k-1}(b_{k}-b_{k-1})}.}\\[2mm]
\end{array}
\end{equation}
To simplify the notation , we denote by $\theta_{\Psi}^{k}=\Psi_{h}^{k}-\mathcal{I}_{h}\Psi^{k}$, $\theta_{\mathbf{A}}^{k}=\mathbf{A}_{h}^{k}-{\bm \pi}_{h}\mathbf{A}^{k}$ and $\theta_{\phi}^{k}=\phi_{h}^{k}-I_{h}\phi^{k}$. In view of the interpolation error estimates (\ref{eq:4-0}), we only need to prove that for $k =1,2,\cdots, M$, there holds
\begin{equation}\label{eq:4-2-0}
\Vert \theta_{\Psi}^{k} \Vert_{\mathcal{H}^{1}} + \Vert \theta_{\mathbf{A}}^{k} \Vert_{\mathbf{H}^{1}} + \Vert \theta_{\phi}^{k} \Vert_{H^{1}} \leq C(h^{r} + \tau^{2}).
\end{equation}

By assuming that 
\begin{equation}
\frac{\partial \Psi}{\partial t}  \in L^{2}(0,T;\mathcal{L}^{2}(\Omega)),\quad \frac{\partial ^{2} \mathbf{A}}{\partial t^{2}} \in L^{2}(0,T;\mathbf{L}^{2}(\Omega)),
\end{equation} 
the weak solution $(\Psi, \mathbf{A}, \phi)$ of the M-S-C system in sense of Definition~\ref{def:2.2} satisfies 
\begin{equation}\label{eq:4-3}
\left\{
\begin{array}{@{}l@{}}
{\displaystyle  -{\rm i}\big(\frac{\partial \Psi}{\partial t},\, \varphi\big) + \frac{1}{2}B\big(\mathbf{A}; \Psi, \,\varphi\big) + \big(V\Psi, \, \varphi\big) + \big(\phi\Psi,\, \varphi\big) = 0, \;\; \forall \varphi \in \mathcal{H}_{0}^{1}(\Omega), } \\[2mm]
{\displaystyle \big(\frac{\partial ^{2} \mathbf{A}}{\partial t^{2}},  \, \mathbf{v}\big) + \big(\nabla \times\mathbf{A},\,\nabla \times\mathbf{v}\big) - \big(\frac{\partial \phi}{\partial t}, \,\nabla \cdot \mathbf{v}\big) + \big(f(\Psi, \Psi), \, \mathbf{v}\big)  + \big(|\Psi|^{2}\mathbf{A}, \mathbf{v}\big) = 0 ,}\\[2mm]
{\displaystyle \qquad\qquad \qquad \qquad\qquad \qquad \qquad\qquad \qquad \qquad \qquad \qquad \qquad  \forall \mathbf{v}\in \mathbf{H}_{\rm t}^{1}(\Omega), } \\[2mm]
{\displaystyle \big(\nabla \phi,\, \nabla u\big) = \big(|\Psi|^{2},\, u\big),  \quad \forall u \in H_{0}^{1}(\Omega).}
\end{array}
\right.
\end{equation}
Subtracting the discrete sysytem (\ref{eq:2-8}) from (\ref{eq:4-3}) and noting that $\nabla \cdot \mathbf{A}^{k-1} = 0$, we have
\begin{equation}\label{eq:4-4}
\begin{array}{@{}l@{}}
{\displaystyle -2\mathrm{i}\big(\partial_{\tau} \theta^{k}_{\Psi},\,\varphi\big)+B\big(\overline{\mathbf{A}}^{k}_{h};\,\overline{\theta}_{\Psi}^{k}, \, \varphi\big)
= 2\mathrm{i}\Big(\partial_{\tau} \mathcal{I}_{h}\Psi^{k}-(\Psi_t)^{k-\frac{1}{2}},\, \varphi\Big)}\\[2mm]
{\displaystyle \quad+2\big(V(\Psi^{k-\frac{1}{2}}-\overline{\Psi}_{h}^{k}),\,\varphi\big)+ B\big(\mathbf{A}^{k-\frac{1}{2}}; \, (\Psi^{k-\frac{1}{2}}-\mathcal{I}_{h}\overline{\Psi}^{k}), \, \varphi\big)}\\[2mm]
{\displaystyle \quad +2\big(\phi^{k-\frac12}\Psi^{k-\frac12}-\overline{\phi}^{k}_{h}\overline{\Psi}^{k}_{h}, \, \varphi\big)+\left(B\big(\mathbf{A}^{k-\frac{1}{2}};\,\mathcal{I}_{h}\overline{\Psi}^{k},\varphi\big)-B\big(\overline{\mathbf{A}}^{k}_{h}; \, \mathcal{I}_{h}\overline{\Psi}^{k},\varphi\big)\right)} \\
{\displaystyle \; := \sum_{i=1}^{5}V_{i}^{k}(\varphi), \quad \forall \varphi\in\mathcal{X}_{h}^{r},   }
\end{array}
\end{equation}
\begin{equation}\label{eq:4-5}
\begin{array}{@{}l@{}}
{\displaystyle \big(\partial^{2}_{\tau}\theta^{k}_{\mathbf{A}}, \, \mathbf{v}\big)+D\big(\widetilde{\theta}^{k}_{\mathbf{A}},\,\mathbf{v}\big)
= \Big((\mathbf{A}_{tt})^{k-1}-\partial^{2}_{\tau} {\bm \pi}_{h}\mathbf{A}^{k},\, \mathbf{v}\Big) +D\big(\mathbf{A}^{k-1}-\widetilde{{\bm \pi}_{h}\mathbf{A}}^{k},\, \mathbf{v}\big)}\\[2mm]
{\displaystyle \qquad \quad -\Big((\phi_t)^{k-1},\, \nabla \cdot \mathbf{v}\Big) + \Big(|\Psi^{k-1}|^{2}\mathbf{A}^{k-1}-|\Psi^{k-1}_{h}|^{2}\frac{\overline{\mathbf{A}}_{h}^{k} + \overline{\mathbf{A}}_{h}^{k-1}}{2},
\;\mathbf{v}\Big)}\\[2mm]
{\displaystyle \qquad \quad+ \left(f(\Psi^{k-1},\Psi^{k-1})-f(\Psi^{k-1}_{h},\Psi^{k-1}_{h}),\;\mathbf{v}\right)}\\[2mm]
{\displaystyle \quad :=\sum_{i=1}^{5}U_{i}^{k}(\mathbf{v}), \quad \forall\mathbf{v}\in\mathbf{X}_{0h},}
\end{array}
\end{equation}
\begin{align}\label{eq:4-6}
& \big(\nabla {\theta}^{k}_{\phi}, \,\nabla u\big) = \big(\nabla(\phi^{k} - I_{h}\phi^{k}), \nabla u\big) +\big(\vert\Psi^{k}_{h}\vert^{2}-\vert\Psi^{k}\vert^{2},\, u \big), \quad \forall u \in X_{h}^{r}.
\end{align}

In the following of the section, we analyze the above three error equations term by term, respectively.

\subsection{\textbf {Estimates for (\ref{eq:4-4})}}
First, by taking $\varphi = \overline{\theta}_{\Psi}^{k}$ in (\ref{eq:4-4}), the imaginary part of the equation implies 
 \begin{equation}\label{eq:4-5-0}
  \begin{array}{@{}l@{}}
 {\displaystyle \frac{1}{\tau} \left(\| {\theta}_{\Psi}^{k} \|^{2}_{\mathcal{L}^{2}} - \| {\theta}_{\Psi}^{k-1} \|^{2}_{\mathcal{L}^{2}}\right) =  -\mathrm{Im} \sum_{i=1}^{5}V_i^{k}( \overline{\theta}_{\Psi}^{k}) \leq \;\sum_{i=1}^{5}\vert V_i^{k}( \overline{\theta}_{\Psi}^{k}) \vert. }\\[2mm]
 \end{array}
 \end{equation}
 Now we are going to estimate the terms $V_i^{k}( \overline{\theta}_{\Psi}^{k}), \,i=1,\cdots, 5 $ one by one. By the error estimates for the interpolation operator $\mathcal{I}_{h}$ and
the regularity of $\Psi$ in (\ref{eq:2-9}), we see that
 \begin{equation}\label{eq:4-6-0}
 \begin{array}{@{}l@{}}
{\displaystyle  |V_1^{k}( \overline{\theta}_{\Psi}^{k})|+ |V_2^{k}( \overline{\theta}_{\Psi}^{k})|\leq C\big(\tau^{4}+h^{2r}\big)+C\|\overline{\theta}_{\Psi}^{k}\|^{2}_{\mathcal{L}^2}.}
\end{array}
\end{equation}

Thanks to 
\begin{equation}\label{eq:4-7}
\begin{array}{@{}l@{}}
 {\displaystyle B(\mathbf{A};\psi,\varphi)=(\nabla\psi,\nabla\varphi)+\left(|\mathbf{A}|^{2}\psi,\varphi\right)
 +i\left(\varphi^{\ast}\nabla\psi-\psi\nabla\varphi^{\ast},\mathbf{A}\right)}\\[2mm]
 {\displaystyle \qquad \leq C\|\nabla\psi\|_{\mathbf{L}^2}
 \|\nabla\varphi\|_{\mathbf{L}^2},\quad \forall \mathbf{A}\in \mathbf{L}^6(\Omega),
 \,\,\,\psi,\varphi\in\mathcal{H}_{0}^{1}(\Omega),}
 \end{array}
 \end{equation}
 and
 \begin{equation}\label{eq:4-8}
 \begin{array}{@{}l@{}}
 {\displaystyle V_3^{k}( \overline{\theta}_{\Psi}^{k})= B(\mathbf{A}^{k-\frac{1}{2}};(\overline{\Psi}^{k} - \mathcal{I}_{h}\overline{\Psi}^{k}),\overline{\theta}_{\Psi}^{k})
 +B(\mathbf{A}^{k-\frac{1}{2}};( \Psi^{k-\frac{1}{2}} - \overline{\Psi}^{k}),\overline{\theta}_{\Psi}^{k}),}
 \end{array}
 \end{equation}
we get
 \begin{equation}\label{eq:4-9}
 {\displaystyle |V_3^{k}(\overline{\theta}_{\Psi}^{k})|\leq C\big(h^{2r}+\tau^{4}\big)+C\|\nabla\overline{\theta}_{\Psi}^{k}\|^{2}_{\mathbf{L}^2} .}
 \end{equation}
 In order to estimate $V_4^{k}( \overline{\theta}_{\Psi}^{k})$, we first decompose it as follows.
\begin{equation}\label{eq:4-9-0}
\begin{array}{@{}l@{}}
{\displaystyle V_4^{k}( \overline{\theta}_{\Psi}^{k}) = \left((\Psi^{k-\frac12}-\mathcal{I}_{h}\Psi^{k-\frac12})\phi^{k-\frac12},\,\overline{\theta}_{\Psi}^{k}\right) + \left(\mathcal{I}_{h}(\Psi^{k-\frac12}-\overline{\Psi}^{k})\phi^{k-\frac12}, \,\overline{\theta}_{\Psi}^{k}\right) }\\[2mm]
{\displaystyle \qquad \quad + \left( (\mathcal{I}_{h}\overline{\Psi}^{k}-\overline{\Psi}^{k}_{h})\phi^{k-\frac12}, \,\overline{\theta}_{\Psi}^{k}\right) + \left( \overline{\Psi}^{k}_{h}(\phi^{k-\frac12}-I_{h}\phi^{k-\frac12}), \,\overline{\theta}_{\Psi}^{k}\right) }\\[2mm]
{\displaystyle \qquad \quad + \left( \overline{\Psi}^{k}_{h}I_{h}(\phi^{k-\frac12}-\overline{\phi}^{k}), \,\overline{\theta}_{\Psi}^{k}\right) +  \left( \overline{\Psi}^{k}_{h} (I_{h}\overline{\phi}^{k}-\overline{\phi}^{k}_{h}), \,\overline{\theta}_{\Psi}^{k}\right) }
\end{array}
\end{equation}
By using Theorem~\ref{thm3-1}, the regularity assumption, and the properties of the interpolation operators, we obtain from (\ref{eq:4-9-0}) that
\begin{equation}\label{eq:4-9-1}
\begin{array}{@{}l@{}}
{\displaystyle \vert V_4^{k}( \overline{\theta}_{\Psi}^{k}) \vert \leq C\big(h^{2r}+\tau^{4}\big)+C\left(\|\nabla\overline{\theta}_{\Psi}^{k}\|^{2}_{\mathbf{L}^2}
 +\|\nabla\overline{\theta}_{\phi}^{k}\|^{2}_{\mathbf{L}^2}\right).}
\end{array}
\end{equation}
Notice that
 \begin{equation}\label{eq:4-10}
 \begin{array}{@{}l@{}}
 {\displaystyle V_5^{k}( \overline{\theta}_{\Psi}^{k}) = \Big[B(\overline{\mathbf{A}}^{k}_{h}; \mathcal{I}_{h}\overline{\Psi}^{k},\overline{\theta}_{\Psi}^{k})
 -B({\bm\pi}_{h}\overline{\mathbf{A}}^{k}; \mathcal{I}_{h}\overline{\Psi}^{k},\overline{\theta}_{\Psi}^{k})\Big]+\Big[B({\bm \pi}_{h}\overline{\mathbf{A}}^{k}; \mathcal{I}_{h}\overline{\Psi}^{k},\overline{\theta}_{\Psi}^{k})}\\[2mm]
 {\displaystyle \quad
 -B(\overline{\mathbf{A}}^{k}; \mathcal{I}_{h}\overline{\Psi}^{k},\overline{\theta}_{\Psi}^{k})\Big]+\Big[B(\overline{\mathbf{A}}^{k}; \mathcal{I}_{h}\overline{\Psi}^{k},\overline{\theta}_{\Psi}^{k})
 -B(\mathbf{A}^{k-\frac{1}{2}}; \mathcal{I}_{h}\overline{\Psi}^{k},\overline{\theta}_{\Psi}^{k})\Big].}\\[2mm]
 \end{array}
 \end{equation}
By applying (\ref{eq:3.6}) and Theorem~\ref{thm3-1}, it is easy to see that
\begin{equation}\label{eq:4-13}
 \begin{array}{@{}l@{}}
{\displaystyle |V_5^{k}( \overline{\theta}_{\Psi}^{k})|\leq C\big(h^{2r}+\tau^{4}\big) + C\left( D(\overline{\theta}^{k}_{\mathbf{A}},\overline{\theta}^{k}_{\mathbf{A}}) + \|\nabla\overline{\theta}_{\Psi}^{k}\|_{\mathbf{L}^2}^{2} \right). }
\end{array}
\end{equation}
Now multiplying (\ref{eq:4-5-0}) by $\tau$, summing over $k = 1,2,\cdots,m $, and applying the above estimates , we have
\begin{equation}\label{eq:4-15}
\begin{array}{@{}l@{}}
 {\displaystyle \|\theta^{m}_{\Psi}\|^{2}_{\mathcal{L}^2}\leq C\big(h^{2r}+\tau^{4}\big)+C \tau
 \sum_{k=1}^{m}\left(D(\overline{\theta}^{k}_{\mathbf{A}},\overline{\theta}^{k}_{\mathbf{A}}) + \|\nabla\overline{\theta}_{\Psi}^{k}\|_{\mathbf{L}^2}^{2} + \|\nabla\overline{\theta}_{\phi}^{k}\|^{2}_{\mathbf{L}^2}\right)   }\\[2mm]
 {\displaystyle \quad \leq C\big(h^{2r}+\tau^{4}\big)
 +C\tau \sum_{k=0}^{m}\left( D({\theta}^{k}_{\mathbf{A}},
 {\theta}^{k}_{\mathbf{A}}) + \|\nabla\theta_{\Psi}^{k}\|^{2}_{\mathbf{L}^2} + \|\nabla {\theta}_{\phi}^{k}\|^{2}_{\mathbf{L}^2}\right).}
 \end{array}
 \end{equation}

Next, we take $\varphi=\partial_{\tau}{\theta_{\Psi}^{k}}$ in (\ref{eq:4-4}), which gives
\begin{equation}\label{eq:4-16}
 {\displaystyle -2\mathrm{i}(\partial_{\tau} \theta^{k}_{\Psi},\partial_{\tau}{\theta_{\Psi}^{k}})+ B\Big(\overline{\mathbf{A}}^{k}_{h};\,\overline{\theta}_{\Psi}^{k},\partial_{\tau}{\theta_{\Psi}^{k}}\Big)
 =\sum_{j=1}^{5}V_{j}^{k}(\partial_{\tau}{\theta_{\Psi}^{k}}) .}
 \end{equation}
From the real part of (\ref{eq:4-16}) and (\ref{eq:3.6}), we obtain 
\begin{equation}\label{eq:4-16-0}
\begin{array}{@{}l@{}}
{\displaystyle \frac{1}{2\tau}\left(B(\overline{\mathbf{A}}_{h}^{k};\theta_{\Psi}^{k},\theta_{\Psi}^{k})- B(\overline{\mathbf{A}}_{h}^{k-1};\theta_{\Psi}^{k-1},\theta_{\Psi}^{k-1}) \right) = \sum_{j=1}^{5}\mathrm{Re}\big[V_{j}^{k}(\partial_{\tau}{\theta_{\Psi}^{k}})\big]  }\\[2mm]
{\displaystyle \qquad + \Big(\frac{1}{2}(\overline{\mathbf{A}}_{h}^{k}+\overline{\mathbf{A}}_{h}^{k-1})|\theta_{\Psi}^{k-1}|^{2},\,\frac{1}{2}(\partial_{\tau} \mathbf{A}_{h}^{k}+\partial_{\tau} \mathbf{A}_{h}^{k-1})\Big)        }\\[2mm]
{\displaystyle\qquad +\Big(f(\theta_{\Psi}^{k-1},\theta_{\Psi}^{k-1}),\,\frac{1}{2}(\partial_{\tau} \mathbf{A}_{h}^{k}+\partial_{\tau} \mathbf{A}_{h}^{k-1})\Big) ,}
\end{array}
\end{equation}
which yields
\begin{equation}\label{eq:4-16-1}
\begin{array}{@{}l@{}}
{\displaystyle \frac{1}{2}B(\overline{\mathbf{A}}_{h}^{m};\theta_{\Psi}^{m},\theta_{\Psi}^{m}) = \frac{1}{2}B(\overline{\mathbf{A}}_{h}^{0};\theta_{\Psi}^{0},\theta_{\Psi}^{0}) + \tau \sum_{j=1}^{5} \sum_{k=1}^{m}\mathrm{Re}\big[V_{j}^{k}(\partial_{\tau}{\theta_{\Psi}^{k}})\big] }\\[2mm]
{\displaystyle \quad + \tau\sum_{k=1}^{m}\Big(\frac{1}{2}(\overline{\mathbf{A}}_{h}^{k}+\overline{\mathbf{A}}_{h}^{k-1})|\theta_{\Psi}^{k-1}|^{2},\,\partial_{\tau} \overline{\mathbf{A}}_{h}^{k}\Big) +\tau\sum_{k=1}^{m} \Big(f(\theta_{\Psi}^{k-1},\theta_{\Psi}^{k-1}),\, \partial_{\tau} \overline{\mathbf{A}}_{h}^{k}\Big).}
\end{array}
\end{equation}
From Theorem~\ref{thm3-1}, we deduce
\begin{equation}\label{eq:4-16-2}
\begin{array}{@{}l@{}}
{\displaystyle   \sum_{k=1}^{m}\Big(\frac{1}{2}(\overline{\mathbf{A}}_{h}^{k}+\overline{\mathbf{A}}_{h}^{k-1})|\theta_{\Psi}^{k-1}|^{2},\,\partial_{\tau} \overline{\mathbf{A}}_{h}^{k}\Big) \leq \sum_{k=1}^{m} \| \overline{\mathbf{A}}_{h}^{k}+\overline{\mathbf{A}}_{h}^{k-1}\|_{\mathbf{L}^{6}} \|\theta_{\Psi}^{k-1}\|^{2}_{\mathcal{L}^{6}} \|\partial_{\tau} \overline{\mathbf{A}}_{h}^{k}\|_{\mathbf{L}^{2}}  }\\[2mm]
{\displaystyle \qquad  \leq C\sum_{k=1}^{m}\|\theta_{\Psi}^{k-1}\|^{2}_{\mathcal{L}^{6}}   \leq C\sum_{k=0}^{m}\|\nabla\theta_{\Psi}^{k}\|^{2}_{\mathbf{L}^{2}}, }\\[2mm]
{\displaystyle  \sum_{k=1}^{m}\Big(f(\theta_{\Psi}^{k-1},\theta_{\Psi}^{k-1}),\,\partial_{\tau} \overline{{\bm \pi}_{h}\mathbf{A}}^{k}\Big) \leq \sum_{k=1}^{m} \Vert  \nabla \theta_{\Psi}^{k-1} \Vert_{\mathbf{L}^{2}}\Vert \theta_{\Psi}^{k-1} \Vert_{\mathcal{L}^{6}} \Vert\partial_{\tau} \overline{{\bm \pi}_{h}\mathbf{A}}^{k}\Vert_{\mathbf{L}^{3}}  }\\[2mm]
{\displaystyle \qquad \qquad \leq C\sum_{k=0}^{m}\|\nabla\theta_{\Psi}^{k}\|^{2}_{\mathbf{L}^{2}}. }
\end{array}
\end{equation}
Denoting by 
\begin{equation}\label{eq:4-16-3}
J_1^{k} = \Big(f(\theta_{\Psi}^{k-1},\theta_{\Psi}^{k-1}),\;\overline{\partial_{\tau} \theta}_{\mathbf{A}}^{k}\Big) ,
\end{equation}
we have
\begin{equation}\label{eq:4-16-4}
\begin{array}{@{}l@{}}
{\displaystyle \frac{1}{2}B(\overline{\mathbf{A}}_{h}^{m};\theta_{\Psi}^{m},\theta_{\Psi}^{m}) \leq 
 C h^{2r} + C \tau\sum_{k=0}^{m}\|\nabla \theta_{\Psi}^{k}\|^{2}_{\mathbf{L}^{2}} }\\[2mm]
{\displaystyle \qquad \qquad  +  \tau\sum_{k=1}^{m}J_1^{k} +\tau \sum_{j=1}^{5} \sum_{k=1}^{m}\mathrm{Re}\big[V_{j}^{k}(\partial_{\tau}{\theta_{\Psi}^{k}})\big]. }\\[2mm]
 \end{array}
\end{equation}

Now let us estimate $\sum_{k=1}^{m}\mathrm{Re}\big[V_{j}^{k}(\partial_{\tau}{\theta_{\Psi}^{k}})\big] $, $j=1,\cdots,5$ term by term. In light of (\ref{eq:4-2}), we get
\begin{equation}\label{eq:4-17}
\begin{array}{@{}l@{}}
{\displaystyle \tau\sum_{k=1}^{m} V_{1}^{k}(\partial_{\tau}{\theta_{\Psi}^{k}})=2\mathrm{i}\sum_{k=1}^{m}\Big(\partial_{\tau} \mathcal{I}_{h}\Psi^{k}-(\Psi_t)^{k-\frac{1}{2}},\,\theta_{\Psi}^{k}-{\theta_{\Psi}^{k-1}}\Big)}\\[2mm]
{\displaystyle \quad= 2\mathrm{i}\Big(\partial_{\tau} \mathcal{I}_{h}\Psi^{m}-(\Psi_t)^{m-\frac{1}{2}},\;\theta_{\Psi}^{m}\Big)-2\mathrm{i}\Big(\partial_{\tau} \mathcal{I}_{h}\Psi^{1}
-(\Psi_t)^{\frac{1}{2}},\;\theta_{\Psi}^{0}\Big)}\\[2mm]
{\displaystyle \quad-2\mathrm{i}\sum_{k=1}^{m-1}\Big(\partial_{\tau} \mathcal{I}_{h}\Psi^{k+1}
-\partial_{\tau} \mathcal{I}_{h}\Psi^{k}-(\Psi_t)^{k+\frac{1}{2}}
+(\Psi_t)^{k-\frac{1}{2}},\;\theta_{\Psi}^{k}\Big).}
\end{array}
\end{equation}
By employing the regularity assumption and the error estimates of interpolation operators, we see that
\begin{equation}\label{eq:4-19}
\begin{array}{@{}l@{}}
{\displaystyle |\tau\sum_{k=1}^{m} V_{1}^{k}(\partial_{\tau}{\theta_{\Psi}^{k}})|\leq C\big(h^{2r}+\tau^{4}\big)
+ C\|\theta_{\Psi}^{m}\|_{\mathcal{L}^2}^{2}+C\tau \sum_{k=1}^{m-1}{\|\theta_{\Psi}^{k}\|_{\mathcal{L}^2}^{2}}.}
\end{array}
\end{equation}
The second term can be rewritten as
\begin{equation*}
\begin{array}{@{}l@{}}
{\displaystyle \tau V_{2}^{k}(\partial_{\tau}{\theta_{\Psi}^{k}}) = 2\Big(V(\Psi^{k-\frac{1}{2}}-\overline{\Psi}_{h}^{k}),\,\theta_{\Psi}^{k}-\theta_{\Psi}^{k-1}\Big) }\\[2mm]
{\displaystyle   = 2\Big(V(\Psi^{k-\frac{1}{2}}
-\mathcal{I}_{h}\overline{\Psi}^{k}),\,\theta_{\Psi}^{k}-\theta_{\Psi}^{k-1}\Big)-2\Big(V(\frac{1}{2}(\theta_{\Psi}^{k}+\theta_{\Psi}^{k-1}),\,\theta_{\Psi}^{k}-\theta_{\Psi}^{k-1}\Big). }\\[2mm]
\end{array}
\end{equation*}
Arguing as before, we obtain
\begin{equation}\label{eq:4-22}
\begin{array}{@{}l@{}}
{\displaystyle  \big|\tau\sum_{k=1}^{m} \mathrm{Re}\big[ V_{2}^{k}(\partial_{\tau}{\theta_{\Psi}^{k}})\big]\big|\leq C\big(h^{2r}+\tau^{4}\big)
+C \|\theta_{\Psi}^{m}\|_{\mathcal{L}^2}^{2}+ C\tau \sum_{k=1}^{m-1}{\|\theta_{\Psi}^{k}\|_{\mathcal{L}^2}^{2}}.}
\end{array}
\end{equation}

By the definition of the bilinear functional $B(\mathbf{A};\psi,\varphi)$ in (\ref{eq:2-7}), we can  rewrite $\tau  V_{3}^{k}(\partial_{\tau}{\theta_{\Psi}^{k}})$ as follows:
\begin{equation}\label{eq:4-23}
\begin{array}{@{}l@{}}
{\displaystyle \tau  V_{3}^{k}(\partial_{\tau}{\theta_{\Psi}^{k}})= \left(\nabla( \Psi^{k-\frac{1}{2}}-\mathcal{I}_{h}\overline{\Psi}^{k}),\;\nabla
(\theta_{\Psi}^{k}-\theta_{\Psi}^{k-1})\right)}\\[2mm]
{\displaystyle\quad \qquad +\left(|\mathbf{A}^{k-\frac{1}{2}}|^2(\Psi^{k-\frac{1}{2}}-\mathcal{I}_{h}\overline{\Psi}^{k}),\;\theta_{\Psi}^{k}-\theta_{\Psi}^{k-1}\right)}\\[2mm]
{\displaystyle \quad \qquad+ \mathrm{i}\left(\nabla(\Psi^{k-\frac{1}{2}}-\mathcal{I}_{h}\overline{\Psi}^{k})\mathbf{A}^{k-\frac{1}{2}},\;\theta_{\Psi}^{k}-\theta_{\Psi}^{k-1}\right)}\\[2mm]
{\displaystyle \quad \qquad-\mathrm{i}\left((\Psi^{k-\frac{1}{2}}-\mathcal{I}_{h}\overline{\Psi}^{k})\mathbf{A}^{k-\frac{1}{2}},\;\nabla \theta_{\Psi}^{k}
-\nabla \theta_{\Psi}^{k-1}\right).}\\[2mm]
\end{array}
\end{equation}

By employing (\ref{eq:4-2}), (\ref{subeq:2.00}), the regularity assumption (\ref{eq:2-9}), and the Young's inequality,  we can prove the following estimate
\begin{equation}\label{eq:4-23-0}
{\displaystyle  |\tau\sum_{k=1}^{m}  V_{3}^{k}(\partial_{\tau}{\theta_{\Psi}^{k}})| \leq C\big(h^{2r}+\tau^{4}\big) +C \Vert\theta_{\Psi}^{m}\Vert_{\mathcal{L}^2}^{2}
+ \frac{1}{32} \Vert\nabla\theta_{\Psi}^{m}\Vert_{\mathbf{L}^2}^{2}+C\tau \sum_{k=0}^{m}{\Vert\nabla\theta_{\Psi}^{k}\Vert_{\mathbf{L}^2}^{2}}}
\end{equation}
by some standard but tedious arguments which are analogous to the estimate of $\sum_{k=1}^{m}  V_{1}^{k}(\partial_{\tau}{\theta_{\Psi}^{k}})$. Due to space limitations, we omit the proof here.

To estimate the term $\tau V_{4}^{k}(\partial_{\tau}{\theta_{\Psi}^{k}})$, we rewrite it by
\begin{equation}\label{eq:4-24}
 \begin{array}{@{}l@{}}
 {\displaystyle \tau V_{4}^{k}(\partial_{\tau}{\theta_{\Psi}^{k}}) = 2\left(\phi^{k-\frac12}\Psi^{k-\frac12}-I_{h}\overline{\phi}^{k}\mathcal{I}_{h}\overline{\Psi}^{k}, \;\theta_{\Psi}^{k}-\theta_{\Psi}^{k-1} \right) -  2\left(I_{h}\overline{\phi}^{k}\overline{\theta}_{\Psi}^{k}, \;\theta_{\Psi}^{k}-\theta_{\Psi}^{k-1} \right)    }\\[2mm]
 {\displaystyle \qquad \qquad - 2\left( \mathcal{I}_{h}\overline{\Psi}^{k}\overline{\theta}_{\phi}^{k}, \;\theta_{\Psi}^{k}-\theta_{\Psi}^{k-1} \right) -2 \left(\overline{\theta}_{\phi}^{k}\overline{\theta}_{\Psi}^{k}, \;\theta_{\Psi}^{k}-\theta_{\Psi}^{k-1} \right) }.
\end{array}
\end{equation}

Arguing as before, we can obtain
\begin{equation}\label{eq:4-25}
 \begin{array}{@{}l@{}}
 {\displaystyle |\sum_{k=1}^{m}\big(\phi^{k-\frac12}\Psi^{k-\frac12}-I_{h}\overline{\phi}^{k}\mathcal{I}_{h}\overline{\Psi}^{k}, \;\theta_{\Psi}^{k}-\theta_{\Psi}^{k-1} \big)| \leq C (h^{2r}+\tau^{4})  }\\[2mm]
 {\displaystyle \qquad\quad +C \Vert\theta_{\Psi}^{m}\Vert_{\mathcal{L}^2}^{2} + C\tau \sum_{k=0}^{m}{\Vert\theta_{\Psi}^{k}\Vert_{\mathcal{L}^2}^{2}}, }\\[2mm]
 {\displaystyle  \big|{\rm Re}\sum_{k=1}^{m} \big(I_{h}\overline{\phi}^{k}\overline{\theta}_{\Psi}^{k}, \;\theta_{\Psi}^{k}-\theta_{\Psi}^{k-1} \big) \big|  \leq C(h^{2r} + \tau^{4}) + C \Vert\theta_{\Psi}^{m}\Vert_{\mathcal{L}^2}^{2} +\frac{1}{32}\Vert\nabla{\theta}_{\Psi}^{m}\Vert_{\mathbf{L}^2}^{2}  }\\[2mm]
 {\displaystyle \qquad\quad+ C\tau \sum_{k=0}^{m} \Vert\nabla\theta_{\Psi}^{k}\Vert_{\mathbf{L}^2}^{2}. }
\end{array}
\end{equation}

The real part of the last two terms on the right hand side of (\ref{eq:4-24}) can be decomposed as follows:
\begin{equation}\label{eq:4-26}
 \begin{array}{@{}l@{}}
{\displaystyle {\rm Re}\big[  \big( \mathcal{I}_{h}\overline{\Psi}^{k}\overline{\theta}_{\phi}^{k}, \;\theta_{\Psi}^{k}-\theta_{\Psi}^{k-1} \big) + \big(\overline{\theta}_{\phi}^{k}\overline{\theta}_{\Psi}^{k}, \;\theta_{\Psi}^{k}-\theta_{\Psi}^{k-1} \big) \big] }\\[2mm]
{\displaystyle = {\rm Re}\big[  \big( \mathcal{I}_{h}\overline{\Psi}^{k} (\theta_{\Psi}^{k}-\theta_{\Psi}^{k-1})^{\ast}, \;\overline{\theta}_{\phi}^{k} \big) \big] +  \frac{1}{2}\big(|\theta_{\Psi}^{k}|^{2}-|\theta_{\Psi}^{k-1}|^{2}, \;\overline{\theta}_{\phi}^{k} \big)} \\[2mm]
{\displaystyle =  {\rm Re}\big[  \big( \mathcal{I}_{h}{\Psi}^{k} (\theta_{\Psi}^{k})^{\ast}-\mathcal{I}_{h}{\Psi}^{k-1}(\theta_{\Psi}^{k-1})^{\ast}, \;\overline{\theta}_{\phi}^{k} \big) \big] + \frac{1}{2} \big(|\theta_{\Psi}^{k}|^{2}-|\theta_{\Psi}^{k-1}|^{2}, \;\overline{\theta}_{\phi}^{k} \big)   }\\[2mm]
{\displaystyle \qquad  - {\rm Re}\big[  \big( \,\overline{\theta}_{\phi}^{k} (\mathcal{I}_{h}{\Psi}^{k} -\mathcal{I}_{h}{\Psi}^{k-1} ), \; \overline{\theta}_{\Psi}^{k}\big) \big] }\\[2mm]
{\displaystyle = \frac{1}{2}\big( |\Psi_{h}^{k}|^{2} - |\mathcal{I}_{h}\Psi^{k}|^{2},\; \overline{\theta}_{\phi}^{k} \big) -  \frac{1}{2}\big( |\Psi_{h}^{k-1}|^{2} - |\mathcal{I}_{h}\Psi^{k-1}|^{2},\; \overline{\theta}_{\phi}^{k} \big)  } \\[2mm]
{\displaystyle  \qquad  - {\rm Re}\big[  \big( \,\overline{\theta}_{\phi}^{k} (\mathcal{I}_{h}{\Psi}^{k} -\mathcal{I}_{h}{\Psi}^{k-1} ), \; \overline{\theta}_{\Psi}^{k}\big) \big]. }\\[2mm]
 \end{array}
\end{equation}

Combining (\ref{eq:4-24})-(\ref{eq:4-26}) and setting
\begin{equation}\label{eq:4-26-0}
J_2^{k} = \big( |\Psi_{h}^{k}|^{2} - |\mathcal{I}_{h}\Psi^{k}|^{2},\; \overline{\theta}_{\phi}^{k} \big) -  \big( |\Psi_{h}^{k-1}|^{2} - |\mathcal{I}_{h}\Psi^{k-1}|^{2},\; \overline{\theta}_{\phi}^{k} \big), 
\end{equation}
 we obtain
\begin{equation}\label{eq:4-27}
  \begin{array}{@{}l@{}}
 {\displaystyle  \tau \sum_{k=1}^{m} \mathrm{Re}\big[V_{4}^{k}(\partial_{\tau}{\theta_{\Psi}^{k}}) \big] \leq  C\big(h^{2r}+\tau^{4}\big) + C \Vert\theta_{\Psi}^{m}\Vert_{\mathcal{L}^2}^{2} + \frac{1}{32} \|\nabla\theta_{\Psi}^{m}\|^{2}_{\mathbf{L}^{2}}  }\\[2mm]
  {\displaystyle \qquad\quad+ C\tau \sum_{k=0}^{m}\big(\Vert\nabla \theta_{\Psi}^{k}\Vert_{\mathbf{L}^2}^{2} + \Vert\nabla \theta_{\phi}^{k}\Vert_{\mathbf{L}^2}^{2}\big) - \sum_{k=1}^{m}J_2^{k}. }
\end{array}
\end{equation}
 
 We now focus on the analysis of $\tau V_{5}^{k}(\partial_{\tau}{\theta_{\Psi}^{k}})$, which can be rewritten as 
\begin{equation}\label{eq:4-30}
\begin{array}{@{}l@{}}
{\displaystyle   \tau V_{5}^{k}(\partial_{\tau}{\theta_{\Psi}^{k}})= \Big[B(\mathbf{A}^{k-\frac{1}{2}}; \mathcal{I}_{h}\overline{\Psi}^{k},\theta_{\Psi}^{k}-\theta_{\Psi}^{k-1})
-B(\overline{\mathbf{A}}^{k}; \mathcal{I}_{h}\overline{\Psi}^{k},\theta_{\Psi}^{k}-\theta_{\Psi}^{k-1})\Big]}\\[2mm]
{\displaystyle  \qquad\quad \qquad\quad +\Big[B(\overline{\mathbf{A}}^{k}; \mathcal{I}_{h}\overline{\Psi}^{k},\theta_{\Psi}^{k}-\theta_{\Psi}^{k-1})-B({\bm \pi}_{h}\overline{\mathbf{A}}^{k}; \mathcal{I}_{h}\overline{\Psi}^{k},\theta_{\Psi}^{k}-\theta_{\Psi}^{k-1})\Big]}\\[2mm]
{\displaystyle  \qquad \quad \qquad\quad +\Big[B({\bm \pi}_{h}\overline{\mathbf{A}}^{k}; \mathcal{I}_{h}\overline{\Psi}^{k},\theta_{\Psi}^{k}-\theta_{\Psi}^{k-1})-B(\overline{\mathbf{A}}^{k}_{h}; \mathcal{I}_{h}\overline{\Psi}^{k},\theta_{\Psi}^{k}-\theta_{\Psi}^{k-1})\Big]}\\[2mm]
{\displaystyle\qquad \qquad\quad:=V_5^{k,1}+V_5^{k,2}+V_5^{k,3}.}
\end{array}
\end{equation}

By applying (\ref{eq:3.6}) and (\ref{eq:4-2}) and arguing as before, we deduce
\begin{equation}\label{eq:4-34}
\begin{array}{@{}l@{}}
{\displaystyle  |\sum_{k=1}^{m} V_5^{k,1}| +  |\sum_{k=1}^{m} V_5^{k,2}| \leq C\big(h^{2r}+\tau^{4}\big)
 +C \|\theta_{\Psi}^{m}\|_{\mathcal{L}^2}^{2}}\\[2mm]
{\displaystyle \quad+ \frac{1}{32} \|\nabla\theta_{\Psi}^{m}\|_{\mathbf{L}^2}^{2}+C\tau \sum_{k=1}^{m}\|\nabla\theta_{\Psi}^{k}\|_{\mathbf{L}^2}^{2}.}
\end{array}
\end{equation}

In order to estimate $|\sum\limits_{k=1}^{m} V_5^{k,3}|$, we rewrite it as follows.
\begin{equation}\label{eq:4-35}
\begin{array}{@{}l@{}}
{\displaystyle  \sum_{k=1}^{m}V_5^{k,3}=\sum_{k=1}^{m}{ \left(\mathcal{I}_{h}\overline{\Psi}^{k}({\bm \pi}_{h}\overline{\mathbf{A}}^{k}+\overline{\mathbf{A}}^{k}_{h})({\bm\pi}_{h}\overline{\mathbf{A}}^{k}
-\overline{\mathbf{A}}^{k}_{h}),\;\theta_{\Psi}^{k}-\theta_{\Psi}^{k-1}\right)}}\\[2mm]
{\displaystyle\quad\quad\quad\quad-\sum_{k=1}^{m}{\mathrm{i}\left( \mathcal{I}_{h}\overline{\Psi}^{k}({\bm\pi}_{h}\overline{\mathbf{A}}^{k}-\overline{\mathbf{A}}^{k}_{h}),\;
\nabla\theta_{\Psi}^{k}-\nabla\theta_{\Psi}^{k-1}\right)}}\\[2mm]
{\displaystyle \quad\quad\quad\quad+\sum_{k=1}^{m}{\mathrm{i}\left(\nabla \mathcal{I}_{h}\overline{\Psi}^{k}({\bm\pi}_{h}\overline{\mathbf{A}}^{k}
-\overline{\mathbf{A}}^{k}_{h}),\;\theta_{\Psi}^{k}-\theta_{\Psi}^{k-1}\right)}}\\[2mm]
{\displaystyle\quad\quad\quad :=T_1+T_2+T_3.}
\end{array}
\end{equation}
We decompose the term $T_1$ as follows.
\begin{equation}\label{eq:4-36}
\begin{array}{@{}l@{}}
{\displaystyle T_1= \sum_{k=1}^{m}\left(\mathcal{I}_{h}\overline{\Psi}^{k}({\bm\pi}_{h}\overline{\mathbf{A}}^{k}+\overline{\mathbf{A}}^{k}_{h})
({\bm\pi}_{h}\overline{\mathbf{A}}^{k}-\overline{\mathbf{A}}^{k}_{h}),\;\theta_{\Psi}^{k}-\theta_{\Psi}^{k-1}\right)}\\[2mm]
{\displaystyle \quad = -\left(\mathcal{I}_{h}\overline{\Psi}^{m}({\bm\pi}_{h}\overline{\mathbf{A}}^{m}
+\overline{\mathbf{A}}^{m}_{h})
\overline{\theta}_{\mathbf{A}}^{m},\;\theta_{\Psi}^{m}\right) + \left(\mathcal{I}_{h}\overline{\Psi}^{0}({\bm\pi}_{h}\overline{\mathbf{A}}^{0}+\overline{\mathbf{A}}^{0}_{h})
\overline{\theta}_{\mathbf{A}}^{0},\;\theta_{\Psi}^{0}\right)}\\[2mm]
{\displaystyle \quad + \sum_{k=1}^{m}\left(\mathcal{I}_{h}\overline{\Psi}^{k}({\bm\pi}_{h}\overline{\mathbf{A}}^{k}
+\overline{\mathbf{A}}^{k}_{h})
\overline{\theta}_{\mathbf{A}}^{k}-\mathcal{I}_{h}\overline{\Psi}^{k-1}({\bm\pi}_{h}\overline{\mathbf{A}}^{k-1}
+\overline{\mathbf{A}}^{k-1}_{h})
\overline{\theta}_{\mathbf{A}}^{k-1},\;\theta_{\Psi}^{k-1}\right).}
\end{array}
\end{equation}

By applying the Young's inequality and Theorem~\ref{thm3-1}, we can estimate the first two terms on the right side of (\ref{eq:4-36}) by
\begin{equation}\label{eq:4-37}
\begin{array}{@{}l@{}}
{\displaystyle |\left(\mathcal{I}_{h}\overline{\Psi}^{m}({\bm\pi}_{h}\overline{\mathbf{A}}^{m}
+\overline{\mathbf{A}}^{m}_{h})\overline{\theta}_{\mathbf{A}}^{m},\;\theta_{\Psi}^{m}\right)|
+|\left(\mathcal{I}_{h}\overline{\Psi}^{0}({\bm\pi}_{h}\overline{\mathbf{A}}^{0}
+\overline{\mathbf{A}}^{0}_{h})\overline{\theta}_{\mathbf{A}}^{0},\;\theta_{\Psi}^{0}\right)|}\\[2mm]
{\displaystyle \quad\leq \frac{1}{16}D(\overline{\theta}_{\mathbf{A}}^{m},\overline{\theta}_{\mathbf{A}}^{m})+ C\|\theta_{\Psi}^{m}\|_{\mathcal{L}^2}^{2}+ Ch^{2r}.}
\end{array}
\end{equation}

Since
\begin{equation}\label{eq:4-38}
\begin{array}{@{}l@{}}
{\displaystyle \left(\mathcal{I}_{h}\overline{\Psi}^{k}({\bm\pi}_{h}\overline{\mathbf{A}}^{k}+\overline{\mathbf{A}}^{k}_{h})
\overline{\theta}_{\mathbf{A}}^{k}-\mathcal{I}_{h}\overline{\Psi}^{k-1}({\bm\pi}_{h}\overline{\mathbf{A}}^{k-1}
+\overline{\mathbf{A}}^{k-1}_{h})\overline{\theta}_{\mathbf{A}}^{k-1},\;\theta_{\Psi}^{k-1}\right)}\\[2mm]
{\displaystyle \quad= \tau\left(\mathcal{I}_{h}\overline{\Psi}^{k}({\bm\pi}_{h}\overline{\mathbf{A}}^{k}
+\overline{\mathbf{A}}^{k}_{h})\frac{\overline{\theta}_{\mathbf{A}}^{k}-\overline{\theta}_{\mathbf{A}}^{k-1}}{\tau},\;\theta_{\Psi}^{k-1}\right)}\\[2mm]
{\displaystyle \quad\quad+ \tau\left(\frac{\mathcal{I}_{h}\overline{\Psi}^{k}-\mathcal{I}_{h}\overline{\Psi}^{k-1}}{\tau}({\bm\pi}_{h}\overline{\mathbf{A}}^{k}+\overline{\mathbf{A}}^{k}_{h})\overline{\theta}_{\mathbf{A}}^{k-1},\;\theta_{\Psi}^{k-1}\right)}\\[2mm]
{\displaystyle \quad \quad+ \tau \left(\mathcal{I}_{h}\overline{\Psi}^{k-1}\overline{\theta}_{\mathbf{A}}^{k-1}\Big(\frac{{\bm\pi}_{h}\overline{\mathbf{A}}^{k}
-{\bm\pi}_{h}\overline{\mathbf{A}}^{k-1}}{\tau}+\frac{\overline{\mathbf{A}}_{h}^{k}-\overline{\mathbf{A}}_{h}^{k-1}}{\tau}\Big),\;\theta_{\Psi}^{k-1}\right),}
\end{array}
\end{equation}
we deduce
\begin{equation}\label{eq:4-39}
\begin{array}{@{}l@{}}
{\displaystyle |\left(\mathcal{I}_{h}\overline{\Psi}^{k}({\bm\pi}_{h}\overline{\mathbf{A}}^{k}
+\overline{\mathbf{A}}^{k}_{h})\overline{\theta}_{\mathbf{A}}^{k}-\mathcal{I}_{h}\overline{\Psi}^{k-1}
({\bm\pi}_{h}\overline{\mathbf{A}}^{k-1}+\overline{\mathbf{A}}^{k-1}_{h})
\overline{\theta}_{\mathbf{A}}^{k-1},\;\theta_{\Psi}^{k-1}\right)|}\\[2mm]
{\displaystyle \quad\leq \tau \|\mathcal{I}_{h}\overline{\Psi}^{k}\|_{\mathcal{L}^6}\| {\bm\pi}_{h}\overline{\mathbf{A}}^{k}+\overline{\mathbf{A}}^{k}_{h}\|_{\mathbf{L}^6}
\|\partial_{\tau}\overline{\theta}_{\mathbf{A}}^{k}\|_{\mathbf{L}^2}\|\theta_{\Psi}^{k-1}\|_{\mathcal{L}^6}}\\[2mm]
{\displaystyle \quad \quad+ \tau \|\partial_{\tau} \mathcal{I}_{h}\overline{\Psi}^{k}\|_{\mathcal{L}^2}\| {\bm\pi}_{h}\overline{\mathbf{A}}^{k}+\overline{\mathbf{A}}^{k}_{h}\|_{\mathbf{L}^6}
\|\overline{\theta}_{\mathbf{A}}^{k-1}\|_{\mathbf{L}^6}\|\theta_{\Psi}^{k-1}\|_{\mathcal{L}^6}}\\[2mm]
{\displaystyle \quad\quad +\tau \|\mathcal{I}_{h}\overline{\Psi}^{k-1}\|_{\mathcal{L}^6}\| \overline{\theta}_{\mathbf{A}}^{k-1}\|_{\mathbf{L}^6}\| \partial_{\tau} {\bm\pi}_{h}\overline{\mathbf{A}}^{k} +\partial_{\tau}\overline{\mathbf{A}}_{h}^{k}\|_{\mathbf{L}^2}\|\theta_{\Psi}^{k-1}\|_{\mathcal{L}^6}}\\[2mm]
{\displaystyle \quad\leq C\tau\left(\|\partial_{\tau} \overline{\theta}_{\mathbf{A}}^{k}\|_{\mathbf{L}^2}
 + \|\overline{\theta}_{\mathbf{A}}^{k-1}\|_{\mathbf{H}^1}\right)\|\theta_{\Psi}^{k-1}\|_{\mathcal{H}^1}}\\[2mm]
{\displaystyle \quad\leq C\tau\left(\|\partial_{\tau} \overline{\theta}_{\mathbf{A}}^{k}\|^{2}_{\mathbf{L}^2} + D(\overline{\theta}_{\mathbf{A}}^{k-1},\overline{\theta}_{\mathbf{A}}^{k-1}) + \|\nabla\theta_{\Psi}^{k-1}\|_{\mathbf{L}^2}^{2} \right) }
\end{array}
\end{equation}
by applying Theorem~\ref{thm3-1}.

Hence we get the estimate of $T_1$ as follows.
\begin{equation}\label{eq:4-40}
\begin{array}{@{}l@{}}
{\displaystyle |T_1| \leq \frac{1}{16}D(\overline{\theta}_{\mathbf{A}}^{m},\overline{\theta}_{\mathbf{A}}^{m})
+ C\|\theta_{\Psi}^{m}\|_{\mathcal{L}^2}^{2}+ C h^{2r} }\\[2mm]
{\displaystyle \qquad  + C\tau \sum_{k=0}^{m}\left(\|\partial_{\tau} {\theta}_{\mathbf{A}}^{k}\|_{\mathbf{L}^2}^{2}
+ D({\theta}_{\mathbf{A}}^{k},{\theta}_{\mathbf{A}}^{k})+ \|\nabla\theta_{\Psi}^{k}\|_{\mathbf{L}^2}^{2}\right).}\\[2mm]
\end{array}
\end{equation}
By virtue of (\ref{eq:4-2}) and integrating by parts, we discover
\begin{equation}\label{eq:4-41}
\begin{array}{@{}l@{}}
{\displaystyle {\rm i}T_2=\sum_{k=1}^{m}\left( \mathcal{I}_{h}\overline{\Psi}^{k}({\bm\pi}_{h}\overline{\mathbf{A}}^{k}
-\overline{\mathbf{A}}^{k}_{h}),\;\nabla\theta_{\Psi}^{k}-\nabla\theta_{\Psi}^{k-1}\right)}\\[2mm]
{\displaystyle \quad=\left(\nabla \mathcal{I}_{h}\overline{\Psi}^{m}\overline{\theta}_{\mathbf{A}}^{m},\;\theta_{\Psi}^{m}\right)
 + \left(\mathcal{I}_{h}\overline{\Psi}^{m}\nabla\cdot\overline{\theta}_{\mathbf{A}}^{m},\;\theta_{\Psi}^{m}\right)
 +\left(\mathcal{I}_{h}\overline{\Psi}^{0}\overline{\theta}_{\mathbf{A}}^{0},\;\nabla\theta_{\Psi}^{0}\right)}\\[2mm]
{\displaystyle \quad \quad\quad +\sum_{k=1}^{m}\left(\mathcal{I}_{h}\overline{\Psi}^{k}\overline{\theta}_{\mathbf{A}}^{k}- \mathcal{I}_{h}\overline{\Psi}^{k-1}\overline{\theta}_{\mathbf{A}}^{k-1},\;\nabla\theta_{\Psi}^{k-1}\right),}
\end{array}
\end{equation}
By using Young's inequality and (\ref{eq:4-1}), we can estimate the first three terms on the right side of (\ref{eq:4-41}) as follows:
 \begin{equation}\label{eq:4-42}
 \begin{array}{@{}l@{}}
 {\displaystyle |\big(\nabla \mathcal{I}_{h}\overline{\Psi}^{m}\overline{\theta}_{\mathbf{A}}^{m},\;\theta_{\Psi}^{m}\big)|
 +|\big(\mathcal{I}_{h}\overline{\Psi}^{m}\nabla\cdot\overline{\theta}_{\mathbf{A}}^{m},\;\theta_{\Psi}^{m}\big)|
 +|\big(\mathcal{I}_{h}\overline{\Psi}^{0}\overline{\theta}_{\mathbf{A}}^{0},\;\nabla\theta_{\Psi}^{0}\big)|}\\[2mm]
 {\displaystyle \quad\leq \|\nabla \mathcal{I}_{h}\overline{\Psi}^{m}\|_{\mathbf{L}^3}\|\overline{\theta}_{\mathbf{A}}^{m}\|_{\mathbf{L}^6}\|\theta_{\Psi}^{m}\|_{\mathcal{L}^2}
 +\|\mathcal{I}_{h}\overline{\Psi}^{m}\|_{\mathcal{L}^{\infty}}\|\nabla\cdot\overline{\theta}_{\mathbf{A}}^{m}\|_{{L}^2}
 \|\theta_{\Psi}^{m}\|_{\mathcal{L}^2}+C h^{2r} }\\[2mm]
{\displaystyle \quad\leq C\|\overline{\theta}_{\mathbf{A}}^{m}\|_{\mathbf{H}^1}\|\theta_{\Psi}^{m}\|_{\mathcal{L}^2}
 +C h^{2r}\leq \frac{1}{16} D(\overline{\theta}_{\mathbf{A}}^{m},\overline{\theta}_{\mathbf{A}}^{m}) + C\|\theta_{\Psi}^{m}\|^{2}_{\mathcal{L}^2}+C h^{2r}.  }\\[2mm]
 \end{array}
 \end{equation}
The last term at the right side of (\ref{eq:4-41}) satisfies the following estimate.
 \begin{equation}\label{eq:4-43}
 \begin{array}{@{}l@{}}
 {\displaystyle |\sum_{k=1}^{m}\big(\mathcal{I}_{h}\overline{\Psi}^{k}\overline{\theta}_{\mathbf{A}}^{k}- \mathcal{I}_{h}\overline{\Psi}^{k-1}\overline{\theta}_{\mathbf{A}}^{k-1},\;\nabla\theta_{\Psi}^{k-1}\big)|}\\[2mm]
{\displaystyle \leq  \tau \sum_{k=1}^{m}{ \Big( \|\partial_{\tau} \mathcal{I}_{h}\overline{\Psi}^{k}\|_{\mathcal{L}^3}\|\overline{\theta}_{\mathbf{A}}^{k}
\|_{\mathbf{L}^6}\|\nabla\theta_{\Psi}^{k-1}\|_{\mathbf{L}^2}} + \|\mathcal{I}_{h}\overline{\Psi}^{k-1}\|_{\mathcal{L}^{\infty}}
\|\partial_{\tau} \overline{\theta}_{\mathbf{A}}^{k}\|_{\mathbf{L}^2}\|\nabla\theta_{\Psi}^{k-1}\|_{\mathbf{L}^2}  \Big) }\\[2mm]
{\displaystyle \leq  C\tau \sum_{k=0}^{m}\left(D({\theta}_{\mathbf{A}}^{k},{\theta}_{\mathbf{A}}^{k})
+\|\partial_{\tau} {\theta}_{\mathbf{A}}^{k}\|_{\mathbf{L}^2}^{2}+\|\nabla\theta_{\Psi}^{k}\|_{\mathbf{L}^2}^{2}\right).}
 \end{array}
 \end{equation}

Hence we get
 \begin{equation}\label{eq:4-44}
 \begin{array}{@{}l@{}}
 {\displaystyle |T_2| \leq \frac{1}{16} D(\overline{\theta}_{\mathbf{A}}^{m},\overline{\theta}_{\mathbf{A}}^{m}) + C\|\theta_{\Psi}^{m}\|^{2}_{\mathcal{L}^2}+C h^{2r} }\\[2mm]
 {\displaystyle \qquad + C\tau \sum_{k=0}^{m}\left(D({\theta}_{\mathbf{A}}^{k},{\theta}_{\mathbf{A}}^{k})
+\|\partial_{\tau} {\theta}_{\mathbf{A}}^{k}\|_{\mathbf{L}^2}^{2}+\|\nabla\theta_{\Psi}^{k}\|_{\mathbf{L}^2}^{2}\right).}
 \end{array}
 \end{equation}

Reasoning as before, we can estimate $ T_3 $ as follows.
 \begin{equation}\label{eq:4-45}
 \begin{array}{@{}l@{}}
 {\displaystyle |T_3|=\big|\mathrm{i}\sum_{k=1}^{m}\big(\nabla \mathcal{I}_{h}\overline{\Psi}^{k}({\bm\pi}_{h}\overline{\mathbf{A}}^{k}
 -\overline{\mathbf{A}}^{k}_{h}),\;\theta_{\Psi}^{k}-\theta_{\Psi}^{k-1}\big)\big|}\\[2mm]
  {\displaystyle \leq \frac{1}{16} D(\overline{\theta}_{\mathbf{A}}^{m},\overline{\theta}_{\mathbf{A}}^{m}) + C\|\theta_{\Psi}^{m}\|^{2}_{\mathcal{L}^2}+C h^{2r} }\\[2mm]
 {\displaystyle \qquad + C\tau \sum_{k=0}^{m}\left(D({\theta}_{\mathbf{A}}^{k},{\theta}_{\mathbf{A}}^{k})
+\|\partial_{\tau} {\theta}_{\mathbf{A}}^{k}\|_{\mathbf{L}^2}^{2}+\|\nabla\theta_{\Psi}^{k}\|_{\mathbf{L}^2}^{2}\right).}
 \end{array}
 \end{equation}

Combining (\ref{eq:4-40}), (\ref{eq:4-44}), and (\ref{eq:4-45}) implies
\begin{equation}\label{eq:4-46}
\begin{array}{@{}l@{}}
{\displaystyle |\sum_{k=1}^{m} V_5^{k,3}| \leq  \frac{3}{16} D(\overline{\theta}_{\mathbf{A}}^{m},\overline{\theta}_{\mathbf{A}}^{m}) + C\|\theta_{\Psi}^{m}\|^{2}_{\mathcal{L}^2}+C h^{2r}}\\[2mm]
 {\displaystyle \quad\quad\quad + C\tau \sum_{k=0}^{m}\left(D({\theta}_{\mathbf{A}}^{k},{\theta}_{\mathbf{A}}^{k})
 +\|\partial_{\tau} {\theta}_{\mathbf{A}}^{k}\|_{\mathbf{L}^2}^{2}
 +\|\nabla\theta_{\Psi}^{k}\|_{\mathbf{L}^2}^{2}\right),}
 \end{array}
 \end{equation}
and thus
 \begin{equation}\label{eq:4-47}
 \begin{array}{@{}l@{}}
 {\displaystyle \displaystyle |\tau\sum_{k=1}^{m}V_{5}^{k}(\partial_{\tau}{\theta_{\Psi}^{k}})|\leq  |\sum_{k=1}^{m} V_5^{k,1}| + |\sum_{k=1}^{m} V_5^{k,2}| + |\sum_{k=1}^{m} V_5^{k,3}|  
   }\\[2mm]
  {\displaystyle  \quad \leq  C\big(h^{2r}+\tau^{4}\big)+\frac{3}{16} D(\overline{\theta}_{\mathbf{A}}^{m},\overline{\theta}_{\mathbf{A}}^{m}) + \frac{1}{32} \|\nabla\theta_{\Psi}^{m}\|_{\mathbf{L}^2}^{2} + C\|\theta_{\Psi}^{m}\|_{\mathcal{L}^2}^{2}             }\\[2mm] 
 {\displaystyle   \qquad  + C\tau \sum_{k=0}^{m}\left(D({\theta}_{\mathbf{A}}^{k},{\theta}_{\mathbf{A}}^{k}) +\|\partial_{\tau} {\theta}_{\mathbf{A}}^{k}\|_{\mathbf{L}^2}^{2}+\|\nabla\theta_{\Psi}^{k}\|_{\mathbf{L}^2}^{2} \right).}\\[2mm]
 \end{array}
 \end{equation}

 Now substituting (\ref{eq:4-19}), (\ref{eq:4-22}), (\ref{eq:4-23-0}), (\ref{eq:4-27}), and (\ref{eq:4-47}) into (\ref{eq:4-16-4}), we have
 \begin{equation}\label{eq:4-48}
\begin{array}{@{}l@{}}
{\displaystyle \frac{1}{2}B(\overline{\mathbf{A}}_{h}^{m};\theta_{\Psi}^{m},\theta_{\Psi}^{m}) + \sum_{k=1}^{m}J_2^{k} \, \leq 
 C \big(h^{2r} +\tau^{4}\big)+ \frac{3}{32} \|\nabla\theta_{\Psi}^{m}\|_{\mathbf{L}^2}^{2} +\frac{3}{16} D(\overline{\theta}_{\mathbf{A}}^{m},\overline{\theta}_{\mathbf{A}}^{m})}\\[2mm]
 {\displaystyle  +C\|\theta_{\Psi}^{m}\|_{\mathcal{L}^2}^{2} +\tau\sum_{k=1}^{m}J_1^{k} +C \tau\sum_{k=0}^{m}\left( \|\nabla \theta_{\Psi}^{k}\|^{2}_{\mathbf{L}^{2}} +\|\nabla \theta_{\phi}^{k}\|^{2}_{\mathbf{L}^{2}}+ D(\theta_{\mathbf{A}}^{k},\theta_{\mathbf{A}}^{k}) + \|\partial_{\tau} \theta_{\mathbf{A}}^{k} \|^{2}_{\mathbf{L}^{2}}\right)  .}
 \end{array}
\end{equation}
Arguing as in the proof of Theorem~\ref{thm6-0}, we have
\begin{equation}\label{eq:4-49}
\frac{9}{32}\Vert\nabla \theta_{\Psi}^{m} \Vert_{\mathbf{L}^{2}}^{2} \leq B(\overline{\mathbf{A}}_{h}^{m};\theta_{\Psi}^{m},\theta_{\Psi}^{m}) + C \Vert \theta_{\Psi}^{m} \Vert_{\mathcal{L}^{2}}^{2}.
\end{equation}
 Consequently, by inserting (\ref{eq:4-49}) into (\ref{eq:4-48}), we find
 \begin{equation}\label{eq:4-50}
 \begin{array}{@{}l@{}}
 {\displaystyle \frac{3}{64}\|\nabla\theta_{\Psi}^{m}\|_{\mathbf{L}^2}^{2}
 + \sum_{k=1}^{m}J_2^{k} \,  \leq 
C \big(h^{2r} +\tau^{4}\big)+\frac{3}{16} D(\overline{\theta}_{\mathbf{A}}^{m},\overline{\theta}_{\mathbf{A}}^{m}) + C\|\theta_{\Psi}^{m}\|_{\mathcal{L}^2}^{2} }\\[2mm]
 {\displaystyle   +\tau\sum_{k=1}^{m}J_1^{k} +C \tau\sum_{k=0}^{m}\left( \|\nabla \theta_{\Psi}^{k}\|^{2}_{\mathbf{L}^{2}} +\|\nabla \theta_{\phi}^{k}\|^{2}_{\mathbf{L}^{2}}+ D(\theta_{\mathbf{A}}^{k},\theta_{\mathbf{A}}^{k}) + \|\partial_{\tau} \theta_{\mathbf{A}}^{k} \|^{2}_{\mathbf{L}^{2}}\right)  .}
 \end{array}
 \end{equation}
By substituting (\ref{eq:4-15}) into (\ref{eq:4-50}), we end up with
 \begin{equation}\label{eq:4-51}
 \begin{array}{@{}l@{}}
 {\displaystyle \frac{3}{64}\|\nabla\theta_{\Psi}^{m}\|_{\mathbf{L}^2}^{2}
 + \sum_{k=1}^{m}J_2^{k} \,  \leq 
C \big(h^{2r} +\tau^{4}\big)+\frac{3}{16} D(\overline{\theta}_{\mathbf{A}}^{m},\overline{\theta}_{\mathbf{A}}^{m}) + \tau\sum_{k=1}^{m}J_1^{k} }\\[2mm]
 {\displaystyle    \qquad + C \tau\sum_{k=0}^{m}\left( \|\nabla \theta_{\Psi}^{k}\|^{2}_{\mathbf{L}^{2}} +\|\nabla \theta_{\phi}^{k}\|^{2}_{\mathbf{L}^{2}}+ D(\theta_{\mathbf{A}}^{k},\theta_{\mathbf{A}}^{k}) + \|\partial_{\tau} \theta_{\mathbf{A}}^{k} \|^{2}_{\mathbf{L}^{2}}\right)  .}
 \end{array}
 \end{equation}
 
 \subsection{\textbf{Estimates for (\ref{eq:4-5})}}
Taking $\mathbf{v}= \frac{\displaystyle 1}{\displaystyle 2\tau}(\theta_{\mathbf{A}}^{k}
-\theta_{\mathbf{A}}^{k-2})$ in (\ref{eq:4-5}), we see that
\begin{equation}\label{eq:4-62-0}
\begin{array}{@{}l@{}}
{\displaystyle \Big(\partial_{\tau}^{2}\theta^{k}_{\mathbf{A}},\;\frac{1}{2}(\partial_{\tau} \theta_{\mathbf{A}}^{k}+\partial_{\tau} \theta_{\mathbf{A}}^{k-1})\Big)
+D\big(\widetilde{\theta^{k}_{\mathbf{A}}},\;\frac{1}{2}(\partial_{\tau} \theta_{\mathbf{A}}^{k}+\partial_{\tau} \theta_{\mathbf{A}}^{k-1})\big)}\\[2mm]
{\displaystyle \quad = \frac{1}{2\tau}\left(\|\partial_{\tau} \theta_{\mathbf{A}}^{k}\|_{\mathbf{L}^2}^{2}
-\|\partial_{\tau} \theta_{\mathbf{A}}^{k-1}\|_{\mathbf{L}^2}^{2}\right)+\frac{1}{4\tau}\left(D({\theta}_{\mathbf{A}}^{k}, {\theta}_{\mathbf{A}}^{k})
-(D({\theta}_{\mathbf{A}}^{k-2}, {\theta}_{\mathbf{A}}^{k-2})\right)}\\[2mm]
{\displaystyle  \quad =  \sum_{i =1}^{5}U_i^{k}(\overline{\partial_{\tau} \theta}_{\mathbf{A}}^{k}),}
\end{array}
\end{equation}
which leads to 
\begin{equation}\label{eq:4-62-1}
\begin{array}{@{}l@{}}
{\displaystyle  \frac{1}{2}\|\partial_{\tau} \theta_{\mathbf{A}}^{m}\|_{\mathbf{L}^2}^{2}
+\frac{1}{4}D({\theta}_{\mathbf{A}}^{m}, {\theta}_{\mathbf{A}}^{m})
+\frac{1}{4}D({\theta}_{\mathbf{A}}^{m-1}, {\theta}_{\mathbf{A}}^{m-1})}\\[2mm]
{\displaystyle \quad= \frac{1}{2}\|\partial_{\tau} \theta_{\mathbf{A}}^{0}\|_{\mathbf{L}^2}^{2}
+ \frac{1}{4}D({\theta}_{\mathbf{A}}^{0}, {\theta}_{\mathbf{A}}^{0})
+\frac{1}{4}D({\theta}_{\mathbf{A}}^{-1}, {\theta}_{\mathbf{A}}^{-1}) + {\tau}\sum_{k=1}^{m} \sum_{i =1}^{5}U_i^{k}(\overline{\partial_{\tau} \theta}_{\mathbf{A}}^{k}) }\\[2mm]
{\displaystyle \quad \leq Ch^{2r} + {\tau}\sum_{k=1}^{m} \sum_{i =1}^{5}U_i^{k}(\overline{\partial_{\tau} \theta}_{\mathbf{A}}^{k}). }\\[2mm]
\end{array}
\end{equation}
Now we estimate  $\sum\limits_{k=1}^{m} U_i^{k}(\overline{\partial_{\tau} \theta}_{\mathbf{A}}^{k})$, $i=1,2,3,4,5$. Under the regularity assumption of $\mathbf{A}$ in (\ref{eq:2-9}), we have
\begin{equation}\label{eq:4-63}
\begin{array}{@{}l@{}}
{\displaystyle \tau \sum_{k=1}^{m} |U_1^{k}(\overline{\partial_{\tau} \theta}_{\mathbf{A}}^{k})| \leq C\left(h^{2r}+\tau^{4}\right)
+ C\tau\sum_{k=1}^{m} \|\overline{\partial_{\tau} \theta}_{\mathbf{A}}^{k}\|_{\mathbf{L}^2}^{2}.}
\end{array}
\end{equation}
Applying (\ref{eq:4-2}), the regularity assumption, and the Young's inequality, we can bound $\tau \sum_{k=1}^{m} U_2^{k}(\overline{\partial_{\tau} \theta}_{\mathbf{A}}^{k})$ as follows.

\begin{equation}\label{eq:4-63-4}
\begin{array}{@{}l@{}}
{\displaystyle  \tau \sum_{k=1}^{m} U_2^{k}(\overline{\partial_{\tau} \theta}_{\mathbf{A}}^{k})\leq C\big(h^{2r} + \tau^{4}\big)
+ C\tau \sum_{k=0}^{m}D\left(\theta_{\mathbf{A}}^{k},\theta_{\mathbf{A}}^{k}\right)}\\[2mm]
{\displaystyle \quad\quad+ \frac{1}{32} D\left(\theta_{\mathbf{A}}^{m},\theta_{\mathbf{A}}^{m}\right)
+\frac{1}{32} D\left(\theta_{\mathbf{A}}^{m-1},\theta_{\mathbf{A}}^{m-1}\right).}
\end{array}
\end{equation}

Since $\overline{\partial_{\tau} \theta}_{\mathbf{A}}^{k} \in \mathbf{X}_{0h}$, we have
\begin{equation}
\big(\frac{1}{2\tau}(I_{h}\phi^{k} - I_{h}\phi^{k-2}),\, \nabla \cdot \overline{\partial_{\tau} \theta}_{\mathbf{A}}^{k}\big) = 0,
\end{equation}
from which we deduce
\begin{equation}
\begin{array}{@{}l@{}}
{\displaystyle U_3^{k}(\overline{\partial_{\tau} \theta}_{\mathbf{A}}^{k}) = -\Big( (\phi_t)^{k-1},\, \nabla \cdot \overline{\partial_{\tau} \theta}_{\mathbf{A}}^{k} \Big)  = -\Big((\phi_t)^{k-1} - \frac{1}{2\tau}(\phi^{k} - \phi^{k-2}) ,\, \nabla \cdot \overline{\partial_{\tau} \theta}_{\mathbf{A}}^{k} \Big) }\\[2mm]
{\displaystyle \qquad\qquad  - \Big(\frac{1}{2\tau}(\phi^{k} - \phi^{k-2}) - \frac{1}{2\tau}(I_{h}\phi^{k} - I_{h}\phi^{k-2})  ,\; \nabla \cdot \overline{\partial_{\tau} \theta}_{\mathbf{A}}^{k} \Big).   }
\end{array}
\end{equation}
By applying (\ref{eq:4-2}) and reasoning as before, we have
\begin{equation}\label{eq:4-63-5}
\begin{array}{@{}l@{}}
{\displaystyle  \tau \sum_{k=1}^{m} U_3^{k}(\overline{\partial_{\tau} \theta}_{\mathbf{A}}^{k})\leq C\big(h^{2r} + \tau^{4}\big)
+ C\tau \sum_{k=0}^{m}D\left(\theta_{\mathbf{A}}^{k},\theta_{\mathbf{A}}^{k}\right)}\\[2mm]
{\displaystyle \quad\quad+ \frac{1}{32} D\left(\theta_{\mathbf{A}}^{m},\theta_{\mathbf{A}}^{m}\right)
+\frac{1}{32} D\left(\theta_{\mathbf{A}}^{m-1},\theta_{\mathbf{A}}^{m-1}\right).}
\end{array}
\end{equation} 
By applying Theorem~\ref{thm3-1} and the regularity assumption, the terms $\tau\sum_{k=1}^{m}\\ U_4^{k}(\overline{\partial_{\tau} \theta}_{\mathbf{A}}^{k})$ can be estimated by a standard argument.
\begin{equation}\label{eq:4-65}
\tau\sum_{k=1}^{m} U_4^{k}(\overline{\partial_{\tau} \theta}_{\mathbf{A}}^{k}) \leq C\big(h^{2r} + \tau^{4}\big) + C\tau \sum_{k=0}^{m}\big(\Vert\nabla \theta_{\Psi}^{k} \Vert_{\mathbf{L}^{2}}^{2} + \Vert \partial_{\tau} \theta_{\mathbf{A}}^{k} \Vert _{\mathbf{L}^{2}}^{2}\big).
\end{equation}

To estimate  $\sum\limits_{k=1}^{m} U_5^{k}(\overline{\partial_{\tau} \theta}_{\mathbf{A}}^{k})$, we first rewrite $U_5^{k}(\overline{\partial_{\tau} \theta}_{\mathbf{A}}^{k})$ as follows.
\begin{equation}\label{eq:4-69}
\begin{array}{@{}l@{}}
{\displaystyle U_5^{k}(\overline{\partial_{\tau} \theta}_{\mathbf{A}}^{k})=\left(f(\Psi^{k-1},\Psi^{k-1})-f(\mathcal{I}_{h}\Psi^{k-1}, \mathcal{I}_{h}\Psi^{k-1}),\;\overline{\partial_{\tau} \theta}_{\mathbf{A}}^{k}\right) }\\[2mm]
{\displaystyle \quad +\left(f(\mathcal{I}_{h}\Psi^{k-1}, \mathcal{I}_{h}\Psi^{k-1})-f(\Psi^{k-1}_{h},\Psi^{k-1}_{h}),\;\overline{\partial_{\tau} \theta}_{\mathbf{A}}^{k}\right) }\\[2mm]
{\displaystyle \quad := U_5^{k,1}(\overline{\partial_{\tau} \theta}_{\mathbf{A}}^{k})+U_5^{k,2}(\overline{\partial_{\tau} \theta}_{\mathbf{A}}^{k}).}
\end{array}
\end{equation}

A simple calculation shows that
\begin{equation}\label{eq:4-70}
\begin{array}{@{}l@{}}
{\displaystyle f(\psi,\psi)-f(\varphi,\varphi)=\frac{\mathrm{i}}{2}(\psi^{\ast}\nabla \psi- \psi \nabla\psi^{\ast})
  -\frac{\mathrm{i}}{2} (\varphi^{\ast} \nabla\varphi-\varphi \nabla\varphi^{\ast}) }\\[2mm]
{\displaystyle = -\frac{\mathrm{i}}{2}\left(\varphi^{\ast}\nabla(\varphi-\psi)-\varphi\nabla(\varphi-\psi)^{\ast}\right)
 +\frac{\mathrm{i}}{2}\left((\varphi-\psi)\nabla\psi^{\ast}-(\varphi-\psi)^{\ast}\nabla\psi\right),}
\end{array}
\end{equation}
and  consequently, we have
\begin{equation}\label{eq:4-71}
\begin{array}{@{}l@{}}
{\displaystyle U_5^{k,1}(\overline{\partial_{\tau} \theta}_{\mathbf{A}}^{k})=\left(f(\Psi^{k-1},\Psi^{k-1})-f(\mathcal{I}_{h}\Psi^{k-1}, \mathcal{I}_{h}\Psi^{k-1}),\;\overline{\partial_{\tau} \theta}_{\mathbf{A}}^{k}\right)}\\[2mm]
{\displaystyle\quad \leq  C\left(h^{2r}+\|\overline{\partial_{\tau} \theta}_{\mathbf{A}}^{k}\|_{\mathbf{L}^2}^{2}\right)}
\end{array}
\end{equation}
by applying (\ref{eq:4-0}) and (\ref{eq:4-1}).

Similarly, by employing (\ref{eq:4-1}), we deduce
\begin{equation}\label{eq:4-72}
\begin{array}{@{}l@{}}
{\displaystyle U_5^{k,2}(\overline{\partial_{\tau} \theta}_{\mathbf{A}}^{k})=\left(f(\mathcal{I}_{h}\Psi^{k-1}, \mathcal{I}_{h}\Psi^{k-1})-f(\Psi^{k-1}_{h},\Psi^{k-1}_{h}),\;\overline{\partial_{\tau} \theta}_{\mathbf{A}}^{k}\right)}\\[2mm]
{\displaystyle \quad =  -\frac{\mathrm{i}}{2}\left((\theta_{\Psi}^{k-1})^{\ast}\nabla\theta_{\Psi}^{k-1}
-\theta_{\Psi}^{k-1}\nabla(\theta_{\Psi}^{k-1})^{\ast},\;\overline{\partial_{\tau} \theta}_{\mathbf{A}}^{k}\right) }\\[2mm]
{\displaystyle \quad\quad-\frac{\mathrm{i}}{2}\left((\mathcal{I}_{h}\Psi^{k-1})^{\ast}\nabla\theta_{\Psi}^{k-1}
-\mathcal{I}_{h}\Psi^{k-1}\nabla(\theta_{\Psi}^{k-1})^{\ast},\;\overline{\partial_{\tau} \theta}_{\mathbf{A}}^{k}\right) }\\[2mm]
{\displaystyle\quad \quad +\frac{\mathrm{i}}{2}\left(\theta_{\Psi}^{k-1}\nabla (\mathcal{I}_{h}\Psi^{k-1})^{\ast}-(\theta_{\Psi}^{k-1})^{\ast}\nabla \mathcal{I}_{h}\Psi^{k-1},\;\overline{\partial_{\tau} \theta}_{\mathbf{A}}^{k}\right)}\\[2mm]
{\displaystyle \quad \leq -\left(f(\theta_{\Psi}^{k-1},\theta_{\Psi}^{k-1}),\;\overline{\partial_{\tau} \theta}_{\mathbf{A}}^{k}\right)
+ C \| \mathcal{I}_{h}\Psi^{k-1}\|_{\mathcal{L}^{\infty}}\|\nabla\theta_{\Psi}^{k-1}\|_{\mathbf{L}^2}\|\overline{\partial_{\tau} \theta}_{\mathbf{A}}^{k}\|_{\mathbf{L}^2}}\\[2mm]
{\displaystyle \quad\quad+C\|\nabla \mathcal{I}_{h}\Psi^{k-1}\|_{\mathbf{L}^{3}}\|\theta_{\Psi}^{k-1}\|_{\mathcal{L}^6}
\|\overline{\partial_{\tau} \theta}_{\mathbf{A}}^{k}\|_{\mathbf{L}^2}}\\[2mm]
{\displaystyle \quad \leq -\left(f(\theta_{\Psi}^{k-1},\theta_{\Psi}^{k-1}),\;\overline{\partial_{\tau} \theta}_{\mathbf{A}}^{k}\right)
+C\left(\|\nabla\theta_{\Psi}^{k-1}\|_{\mathbf{L}^2}^{2}
+\|\overline{\partial_{\tau} \theta}_{\mathbf{A}}^{k}\|_{\mathbf{L}^2}^{2}\right).}
\end{array}
\end{equation}

Therefore,  we have 
\begin{equation}\label{eq:4-73}
\begin{array}{@{}l@{}}
{\displaystyle \tau\sum_{k=1}^{m} U_5^{k}(\overline{\partial_{\tau} \theta}_{\mathbf{A}}^{k})\leq C h^{2r} -\tau\sum_{k=1}^{m}\left(f(\theta_{\Psi}^{k-1},\theta_{\Psi}^{k-1}),\;\overline{\partial_{\tau} \theta}_{\mathbf{A}}^{k}\right) }\\[2mm]
{\displaystyle \qquad +C\tau\sum_{k=0}^{m}\left(\|\nabla\theta_{\Psi}^{k}\|_{\mathbf{L}^2}^{2}
+\|\partial_{\tau} \theta_{\mathbf{A}}^{k}\|_{\mathbf{L}^2}^{2}\right).}
\end{array}
\end{equation}

Substituting (\ref{eq:4-63}), (\ref{eq:4-63-4}), (\ref{eq:4-63-5}), (\ref{eq:4-65}), and (\ref{eq:4-73}) into (\ref{eq:4-62-1}) and recalling the definition of $J_1^{k}$ in (\ref{eq:4-16-3}),  we obtain
\begin{equation}\label{eq:4-74}
\begin{array}{@{}l@{}}
{\displaystyle  \frac{1}{2}\|\partial_{\tau} \theta_{\mathbf{A}}^{m}\|_{\mathbf{L}^2}^{2}
+\frac{3}{16}D({\theta}_{\mathbf{A}}^{m}, {\theta}_{\mathbf{A}}^{m})
+\frac{3}{16}D({\theta}_{\mathbf{A}}^{m-1}, {\theta}_{\mathbf{A}}^{m-1})}\\[2mm]
{\displaystyle \quad \leq C\big(h^{2r}+\tau^{4} \big)
+ C\tau\sum_{k=0}^{m}\left(D({\theta}_{\mathbf{A}}^{k}, {\theta}_{\mathbf{A}}^{k})
+ \|\partial_{\tau} \theta_{\mathbf{A}}^{k}\|_{\mathbf{L}^2}^{2} + \|\nabla\theta_{\Psi}^{k}\|_{\mathbf{L}^2}^{2}\right) -\tau\sum_{k=1}^{m}J_1^{k}.}\\[2mm]
\end{array}
\end{equation}

 \subsection{\textbf{Estimates for (\ref{eq:4-6})}}
We can deduce the estimate of  $\nabla {\theta}^{k}_{\phi}$ by a standard argument.
\begin{equation}\label{eq:4-74-0}
\Vert \nabla {\theta}^{k}_{\phi} \Vert_{\mathbf{L}^{2}}^{2}\leq Ch^{2r} + C\Vert \nabla {\theta}^{k}_{\Psi} \Vert_{\mathbf{L}^{2}}^{2},\quad k=0,1,\cdots,m.
\end{equation}

From (\ref{eq:4-6}) we know that
\begin{equation}\label{eq:4-75}
\big( \partial_{\tau}\nabla {\theta}^{k}_{\phi}, \,\nabla u\big) = \big( \partial_{\tau}\nabla e_{\phi}^{k}, \,\nabla u\big) + \frac{1}{\tau}\big(\vert\Psi^{k}_{h}\vert^{2}-\vert\Psi^{k}\vert^{2} - \vert\Psi^{k-1}_{h}\vert^{2}+\vert\Psi^{k-1}\vert^{2},\, u \big), \; \forall u \in X_{h}^{r}.
\end{equation}
By taking $u = \overline{\theta}_{\phi}^{k}$ in (\ref{eq:4-75}) and recalling the definition of $J_2^{k}$ in (\ref{eq:4-26-0}), we find
\begin{equation}
\begin{array}{@{}l@{}}
{\displaystyle \frac{1}{2\tau}\big(\Vert \nabla {\theta}_{\phi}^{k} \Vert_{\mathbf{L}^{2}}^{2} -  \Vert \nabla {\theta}_{\phi}^{k-1} \Vert_{\mathbf{L}^{2}}^{2}\big) = \big( \partial_{\tau}\nabla e_{\phi}^{k}, \,\nabla \overline{\theta}_{\phi}^{k}\big) +\frac{1}{\tau} J_2^{k}   } \\[2mm]
{\displaystyle\qquad \qquad  + \frac{1}{\tau}\big(\vert\mathcal{I}_{h}\Psi^{k}\vert^{2}-\vert\Psi^{k}\vert^{2} - \vert\mathcal{I}_{h}\Psi^{k-1}\vert^{2}+\vert\Psi^{k-1}\vert^{2},\, \overline{\theta}_{\phi}^{k} \big),   }
\end{array}
\end{equation}
which implies that
\begin{equation}
\begin{array}{@{}l@{}}
{\displaystyle \frac{1}{2}\Vert \nabla {\theta}_{\phi}^{m} \Vert_{\mathbf{L}^{2}}^{2} = \frac{1}{2}\Vert \nabla {\theta}_{\phi}^{0} \Vert_{\mathbf{L}^{2}}^{2} + \tau \sum_{k=1}^{m} \big( \partial_{\tau}\nabla e_{\phi}^{k}, \,\nabla \overline{\theta}_{\phi}^{k}\big) + \sum_{k=1}^{m}J_2^{k}  }\\[2mm]
{\displaystyle \qquad \quad + \sum_{k=1}^{m} \big(\vert\mathcal{I}_{h}\Psi^{k}\vert^{2}-\vert\Psi^{k}\vert^{2} - \vert\mathcal{I}_{h}\Psi^{k-1}\vert^{2}+\vert\Psi^{k-1}\vert^{2},\, \overline{\theta}_{\phi}^{k} \big).  }
\end{array}
\end{equation}
Employing the error estimates of interpolation operators and the regularity assumption, we obtain
\begin{equation}
\begin{array}{@{}l@{}}
{\displaystyle \tau \sum_{k=1}^{m} \big( \partial_{\tau}\nabla e_{\phi}^{k}, \,\nabla \overline{\theta}_{\phi}^{k}\big)  \leq Ch^{2r} + C\tau\sum_{k=0}^{m} \Vert \nabla {\theta}_{\phi}^{k} \Vert_{\mathbf{L}^{2}}^{2} ,    }\\[2mm]
{\displaystyle  \sum_{k=1}^{m} \big(\vert\mathcal{I}_{h}\Psi^{k}\vert^{2}-\vert\Psi^{k}\vert^{2} - \vert\mathcal{I}_{h}\Psi^{k-1}\vert^{2}+\vert\Psi^{k-1}\vert^{2},\, \overline{\theta}_{\phi}^{k} \big)  \leq Ch^{2r} + C\tau\sum_{k=0}^{m} \Vert \nabla {\theta}_{\phi}^{k} \Vert_{\mathbf{L}^{2}}^{2}. }
\end{array}
\end{equation}
It follows that 
\begin{equation}\label{eq:4-76}
\frac{1}{2}\Vert \nabla {\theta}_{\phi}^{m} \Vert_{\mathbf{L}^{2}}^{2}  \leq Ch^{2r} + C\tau\sum_{k=0}^{m} \Vert \nabla {\theta}_{\phi}^{k} \Vert_{\mathbf{L}^{2}}^{2} + \sum_{k=1}^{m}J_2^{k}.
\end{equation}
By combining (\ref{eq:4-51}), (\ref{eq:4-74}), and (\ref{eq:4-76}), we finally obtain
\begin{equation}\label{eq:4-77}
\begin{array}{@{}l@{}}
{\displaystyle \frac{3}{64}\|\nabla\theta_{\Psi}^{m}\|_{\mathbf{L}^2}^{2} + \frac{1}{2}\Vert \nabla {\theta}_{\phi}^{m} \Vert_{\mathbf{L}^{2}}^{2} + \frac{1}{2}\|\partial_{\tau} \theta_{\mathbf{A}}^{m}\|_{\mathbf{L}^2}^{2}
+\frac{3}{32}D({\theta}_{\mathbf{A}}^{m}, {\theta}_{\mathbf{A}}^{m})
+\frac{3}{32}D({\theta}_{\mathbf{A}}^{m-1}, {\theta}_{\mathbf{A}}^{m-1}) } \\[2mm]
{\displaystyle \quad \leq C\big(h^{2r} +\tau^{4}\big) + C \tau\sum_{k=0}^{m}\left( \|\nabla \theta_{\Psi}^{k}\|^{2}_{\mathbf{L}^{2}} +\|\nabla \theta_{\phi}^{k}\|^{2}_{\mathbf{L}^{2}}+ D(\theta_{\mathbf{A}}^{k},\theta_{\mathbf{A}}^{k}) + \|\partial_{\tau} \theta_{\mathbf{A}}^{k} \|^{2}_{\mathbf{L}^{2}}\right),  }
\end{array}
\end{equation}
which yields the desired estimate (\ref{eq:4-2-0}) by using the discrete Gronwall's inequality.

\section{numerical experiments}\label{sec-7}
In this section, we present two numerical examples to confirm our theoretical analysis.
\begin{example}\label{exam7-1}
To verify the conservation of total charge and energy of our scheme, we consider the M-S-C system (\ref{eq:1.8})-(\ref{eq:1.9}) with the initial datas:
\begin{equation*}
\begin{array}{@{}l@{}}
{\displaystyle \Psi(\mathbf{x},0) = 2\sin(\pi x_1)\sin(\pi x_2)\sin(\pi x_3) + 2\sin(2\pi x_1)\sin(2\pi x_2)\sin(2\pi x_3), }\\[2mm]
{\mathbf{A}(\mathbf{x},0)=\mathbf{A}_0(\mathbf{x})=\big(5\sin(2\pi x_3)(1 - \cos(2\pi x_1))\sin(\pi x_2), \;\;0,  }\\[2mm]
{\displaystyle\qquad \qquad \quad  5\sin(2\pi x_1)(1 - \cos(2\pi x_3))\sin(\pi x_2)\big), }\\[2mm]
{\displaystyle \mathbf{A}_{t}(\mathbf{x},0)=\mathbf{A}_1(\mathbf{x})=0.}
\end{array}
\end{equation*}
\end{example}
In this example we take $\Omega=(0,1)^{3} $, $ V(\mathbf{x}) = 5.0 $, $ T = 4.0$, and the time step $\tau = 0.01$.

The domain is partitioned into uniform tetrahedrals with $51$ nodes in each direction and $6\times 50^{3}$ elements in total. We solve the M-S-C system by the scheme (\ref{eq:2-6-0}) using mixed finite element method with $r = 2$.

The evolutions of the total charge and energy of the discrete system are displayed in Fig.1, which clearly show that our algorithm almost exactly keeps the conservation of the total charge and energy of the system.

\begin{figure}
\begin{center}
\tiny{a}\includegraphics[width=5cm,height=5cm] {./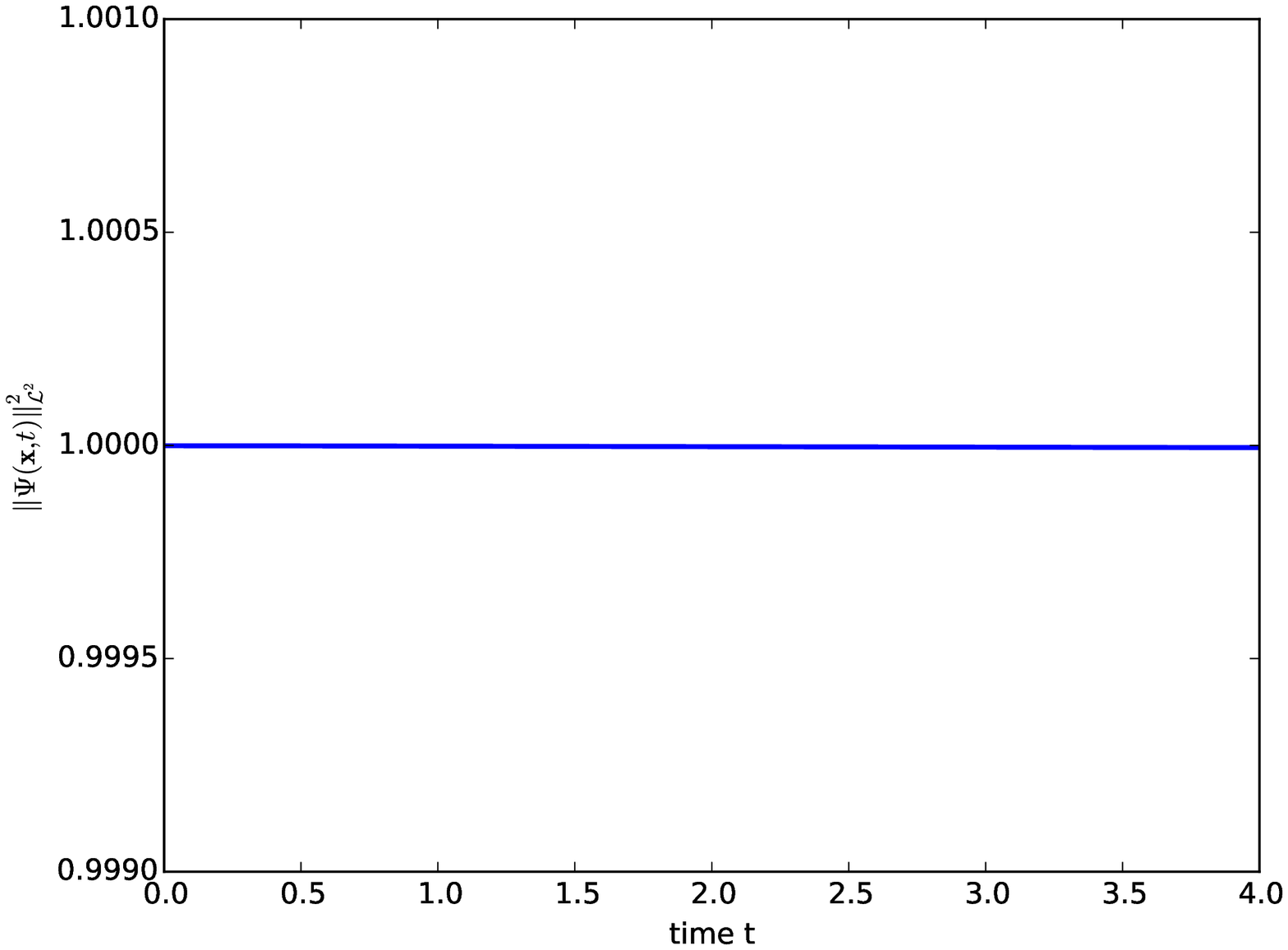}
\tiny{b}\includegraphics[width=5cm,height=5cm] {./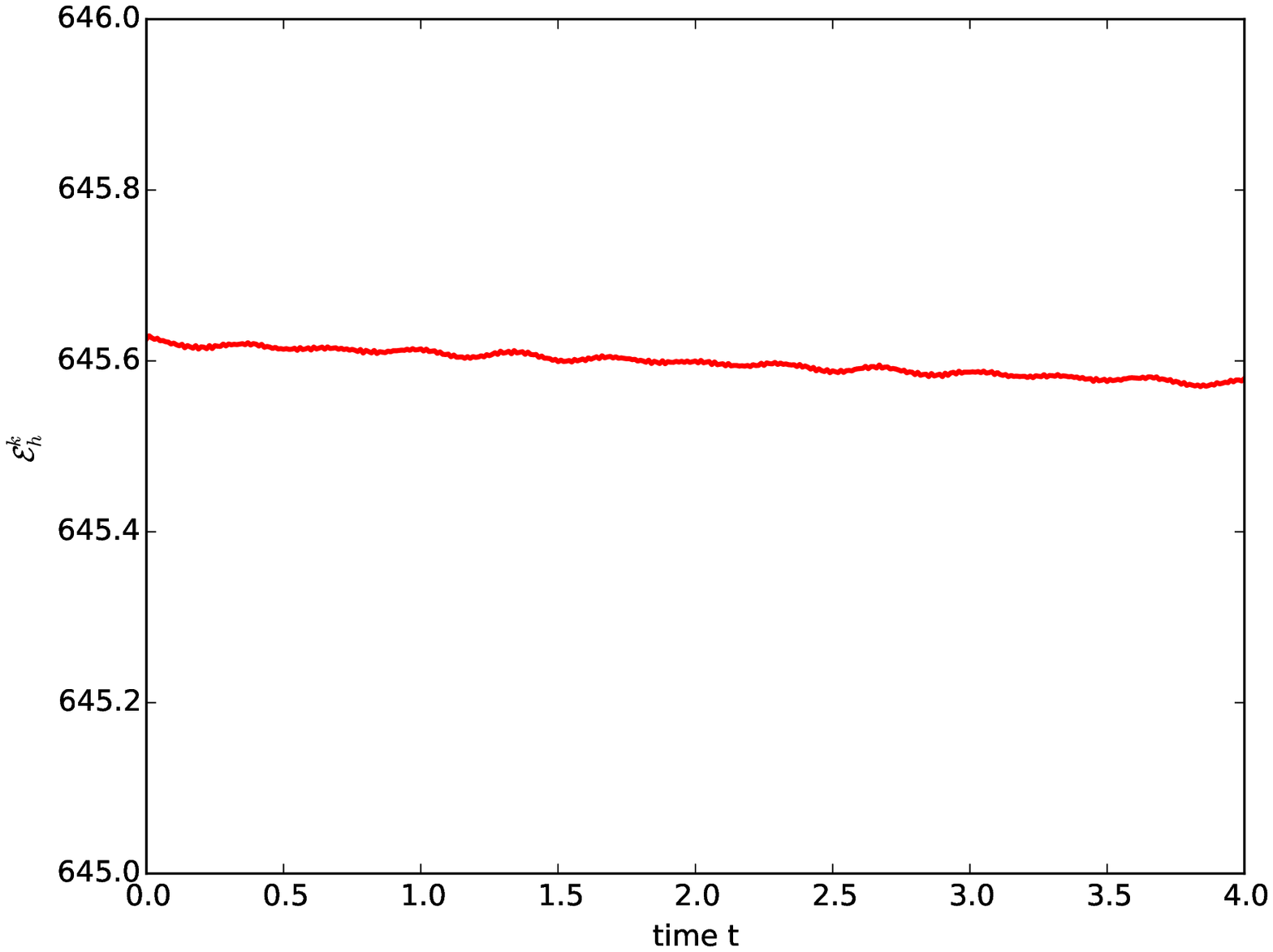}
\caption{Example~\ref{exam7-1} : (a) The evolution of the total charge $\Vert \Psi_{h}^{k}\Vert_{\mathcal{L}^{2}}^{2}$ ; (b) The evolution of the total energy $\mathcal{E}^{k}_{h}$ of the discrete system. 
}
\end{center}
\end{figure}\label{fig:7-1}

\begin{example}\label{exam7-2}
We consider the following M-S-C system:
\begin{equation}\label{eq:7-1}
\left\{
\begin{array}{@{}l@{}}
{\displaystyle  -\mathrm{i}\frac{\partial \Psi}{\partial t}+
\frac{1}{2}\left(\mathrm{i}\nabla +\mathbf{A}\right)^{2}\Psi
 + V\Psi + \phi\Psi= g(\mathbf{x},t) ,\,\, (\mathbf{x},t)\in
\Omega\times(0,T),}\\[2mm]
{\displaystyle \frac{\partial ^{2}\mathbf{A}}{\partial t^{2}}+\nabla\times
(\nabla\times \mathbf{A}) +\frac{\partial (\nabla \phi)}{\partial t}+\frac{\mathrm{i}}{2}\big(\Psi^{*}\nabla{\Psi}-\Psi\nabla{\Psi}^{*}\big) }\\[2mm]
{\displaystyle \qquad\qquad+\vert\Psi\vert^{2}\mathbf{A}=\mathbf{f}(\mathbf{x},t),
\,\,\quad (\mathbf{x},t)\in \Omega\times(0,T),}\\[2mm]
{\displaystyle  \nabla \cdot \mathbf{A} =0, \quad -\Delta \phi - \vert\Psi\vert^{2} = h(\mathbf{x},t),\,\, (\mathbf{x},t)\in \Omega\times(0,T) } \\[2mm]
{\displaystyle \Psi(\mathbf{x},t)=0,\quad \mathbf{A}(\mathbf{x},t)\times\mathbf{n}=0, \quad \phi(\mathbf{x},t) = 0, \,\, (\mathbf{x},t)\in \partial \Omega\times(0,T).}
\end{array}
\right.
\end{equation}
with the exact solution $(\Psi,\mathbf{A},\phi)$ 
\begin{equation*}
\begin{array}{@{}l@{}}
{\displaystyle  \Psi(\mathbf{x}, t) =  5.0e^{\mathrm{i}\pi t}  \sin(2\pi x_1)\sin(2\pi x_2)\sin(2\pi x_3), }\\[2mm]
{\displaystyle \mathbf{A}(\mathbf{x},t)=\sin(\pi t)\Big(\cos(\pi x_1)\sin(\pi x_2)\sin(\pi x_3),\,
\sin(\pi x_1)\cos(\pi x_2)\sin(\pi x_3),}\\[2mm]
{\displaystyle \quad -2\sin(\pi x_1)\sin(\pi x_2)\cos(\pi x_3)\Big)
+ \cos(\pi t)\Big(\sin(2\pi x_3)(1 - \cos(2\pi x_1))\sin(\pi x_2),}\\[2mm]
{\displaystyle \qquad 0, \,\,\sin(2\pi x_1)(1 - \cos(2\pi x_3))\sin(\pi x_2)\Big),}\\[2mm]
{\displaystyle \phi(\mathbf{x}, t) = 4\sin(\pi t) x_1 x_2 x_3(1-x_1)(1-x_2)(1-x_3) }\\[2mm]
{\displaystyle \qquad \qquad  + \;\;cos(\pi t)\sin(\pi x_1)\sin(\pi x_2)\sin(\pi x_3).   }
\end{array}
\end{equation*}
where $V(\mathbf{x})= \frac{1}{2}(x_1^{2} + x_2^{2} + x_3^{2})$ and the right-hand side functions $g(\mathbf{x},t)$, $\mathbf{f}(\mathbf{x} ,t)$ and $h(\mathbf{x},t)$ are determined by the exact solution.
\end{example}

In this example, we set $\Omega = (0,1)^3$ and $T = 2$.
We take a uniform tetrahedral partition with $N+1$ nodes in each direction and $6N^{3}$ elements in total as in the example~\ref{exam7-1}.
The M-S-C system (\ref{eq:7-1}) are solved by the proposed scheme (\ref{eq:2-6-0}) with $r=1,2$, which are denoted by linear element method and quadratic element method, respectively. To test the convergence order of our scheme, we pick $\tau = h^{\frac{1}{2}}$ for the linear element method and $\tau = h$ for the quadratic element method respectively. We present numerical results for the linear element method and the quadratic element method at the final time $T = 2.0$ in Tables~\ref{table7-1} and~\ref{table7-2}, respectively.  We  can clearly see that  the convergence rate of the quadratic element method agrees with our theoretical analysis while the linear element method has better convergence order than our theoretical analysis, which is partially because we use a quadratic element approximation of the vector potential $\mathbf{A}$.

\begin{table}[htb]
\caption{$H^1$ error of linear FEM with $h = \frac{1}{N}$ and $\tau = h^{\frac12}$.}\label{table7-1}
\begin{center}
\begin{tabular}{c|c|c|c}
   \hline\hline
     &$\Vert \Psi_{h}^{M} - \Psi(\cdot, 2)\Vert_{\mathcal{H}^{1}}$ & $\Vert \mathbf{A}^{M}_{h} - \mathbf{A}(\cdot, 2) \Vert _{\mathbf{H}^{1}} $ & $\Vert \phi_{h}^{M} - \phi(\cdot, 2)\Vert_{{H}^{1}}$\\
   \hline
    N=25 &3.3481e-01  & 2.8513e-01 & 4.0193e-02  \\
    
    N=50 & 1.5649e-01 & 1.1960e-01 & 1.6210e-02  \\
     
     N=100 & 6.9587e-02 & 4.5826e-02 & 7.0549e-03  \\
    \hline
     order & 1.13 & 1.33 & 1.25 \\
      \hline\hline
  \end{tabular}
 \end{center}
\end{table}

\begin{table}[htb]
\caption{$H^1$ error of quadratic FEM with $h = \frac{1}{N}$ and $\tau = h$.}\label{table7-2}
\begin{center}
\begin{tabular}{c|c|c|c}
   \hline\hline
     &$\Vert \Psi_{h}^{M} - \Psi(\cdot, 2)\Vert_{\mathcal{H}^{1}}$ & $\Vert \mathbf{A}^{M}_{h} - \mathbf{A}(\cdot, 2) \Vert _{\mathbf{H}^{1}} $ & $\Vert \phi_{h}^{M} - \phi(\cdot, 2)\Vert_{{H}^{1}}$\\
   \hline
    N=25 & 2.3605e-02 & 1.5911e-02 & 6.8377e-03  \\
    
    N=50 & 6.1967e-03 & 4.1329e-03 & 1.6405e-03  \\
     
     N=100 & 1.5204e-03 & 9.4441e-04 &3.9781e-04 \\
    \hline
     order &1.97 & 2.04 & 2.05 \\
      \hline\hline
  \end{tabular}
 \end{center}
\end{table}



\end{document}